\documentclass[12pt]{amsart}

\usepackage{amsmath,bm}
\usepackage[color=yellow,textwidth=3.5cm,colorinlistoftodos]{todonotes}
\usepackage{hyperref}
\usepackage[nameinlink,capitalize]{cleveref}
\usepackage{dsfont}
\usepackage{xfrac}
\usepackage[english]{babel}

\newcommand{\summe}[2]{\displaystyle\sum_{#1}^{#2}}
\newcommand{\summezwei}[2]{\sum_{#1}^{#2}}

\newcommand{\integral}[4]{\displaystyle\int\limits_{#1}^{#2}{#3}\,{#4}}

\newcommand{\indikator}[2]{\mathds{1}_{{#1}}({#2})}
\newcommand{\indikatorzwei}[1]{\mathds{1}_{{#1}}}

\newcommand{\erwartung}[1]{\mathbb{E}({#1})}

\newcommand{\limes}[2]{\lim \limits_{#1 \rightarrow #2}}

\newcommand{\limesinf}[2]{\liminf \limits_{#1 \rightarrow #2} \,}

\newcommand{\ft}[1]{\widehat{{#1}}}
\newcommand{\B}{\mathcal{B}}

\newcommand{\C}{\mathbb{C}}

\newcommand{\R}{\mathbb{R}}

\newcommand{\N}{\mathbb{N}}

\newcommand{\Pro}{\mathbb{P} } 
\newcommand{\eps}{\varepsilon} 
\newcommand{\secret}[1]{}

\newcommand{\norm}[1]{\| {#1}\|}
\newcommand{\Li}[1]{\text{L}(\R^{#1})}

\newcommand{\skp}[2]{ \langle {#1},{#2} \rangle}

\newtheorem{theorem}{Theorem}[section]
\newtheorem{theorem2}[theorem]{Theorem and Definition}
\newtheorem{lemma}[theorem]{Lemma}
\newtheorem{prop}[theorem]{Proposition}
\newtheorem{cor}[theorem]{Corollary}

\theoremstyle{definition}
\newtheorem{defi}[theorem]{Definition}
\newtheorem{example}[theorem]{Example}

\theoremstyle{remark}
\newtheorem{remark}[theorem]{Remark}

\numberwithin{equation}{section}
\parskip=10pt
\parindent=0pt
\setlength{\evensidemargin}{0in}
\setlength{\oddsidemargin}{0in}
\setlength{\textwidth}{6.5in}

\begin{document}
\sloppy
\title[]{Multi operator-stable random measures and fields}
\allowdisplaybreaks
\author{D. Kremer}
\address{Dustin Kremer, Department Mathematik, Universit\"at Siegen, 57068 Siegen, Germany}
\email{kremer\@@{}mathematik.uni-siegen.de}
\allowdisplaybreaks
\author{H.-P. Scheffler}
\address{Hans-Peter Scheffler, Department Mathematik, Universit\"at Siegen, 57068 Siegen, Germany}
\email{scheffler\@@{}mathematik.uni-siegen.de}

\date{\today}

\subjclass[2010]{60E07, 60G18, 60G52, 60G57, 60H05.}
\keywords{Localizable, Multi operator-stable measures, Multi operator-stable fields}

\begin{abstract}
In this paper we construct vector-valued multi operator-stable random measures that behave locally like operator-stable random measures. The space of integrable functions is characterized in terms of a certain quasi-norm. Moreover, a multi operator-stable moving-average representation of a random field is presented which behaves locally like an operator-stable random field which is also operator-self-similar.
\end{abstract}
\maketitle
\section{Introduction}
The notion of self-similarity of stochastic processes and random fields has a long history and yields to a rich class of stochastic models with various applications. See \cite{Abry}, \cite{Embrechts} and \cite{Vehel} for instance. Recall from \cite{lixiao} that an $\R^m$-valued random field $\mathbb{X}=\{X(t):t \in \R^d\}$ is called \textit{$(E,D)$-operator-self-similar}, if there exist a $d \times d$-matrix $E$ and a $m \times m$-matrix $D$ such that 
\begin{equation} \label{eq:25091801}
\{X(r^E t): t \in \R^d\} \overset{fdd}{=} \{r^D X(t):t \in \R^d\}
\end{equation}
for all $r>0$ holds true. In \cite{lixiao} a so called \textit{moving-average} and a \textit{harmonizable} representation based on \textit{symmetric $\alpha$-stable} ($S \alpha S$) random measures are presented. Multivariate $S \alpha S$ distributions are a special case of the much larger class of \textit{operator-stable} laws. For a comprehensive introduction see \cite{thebook}. Based on the recently developed theory of multivariate \textit{independently scattered random measures} (short: ISRMs) and their integrals (see \cite{paper}), moving-average and harmonizable representations of $(E,D)$-operator-self-similar random fields with operator-stable marginals are presented in \cite{paper2}.\\
Our main feature of $S \alpha S$ ISRMs or more generally of operator-stable ISRMs with exponent $B$ (a $m \times m$-matrix) is that they are homogeneous in the time variable $t \in \R^d$, that is $\alpha$ or $B$ are constant and do not depend on $t$. Motivated by various applications, in \cite{falconer2} scalar-valued so called \textit{multi-stable} random measures were constructed. There the stability index $\alpha: \R \rightarrow \R_{+}$ can vary with time $t$. Based on random integrals of deterministic functions so called multi-stable processes were introduced. \\
An important feature of multi-stable random measures and multi-stable processes is that they behave locally like $\alpha(t)$-stable random measures and $\alpha(t)$-stable processes close to time $t$. That means that the local scaling limits are $\alpha(t)$-stable, but where the stability index varies with $t$. See Theorem 2.7 and Theorem 3.2 in \cite{falconer2}.\\
The purpose of this paper is to generalize some of the results in \cite{falconer2} to the operator-stable setting, by allowing the operator-stable exponent $B(t)$ to depend on the time variable $t$.\\
This paper is organized as follows: In \cref{chapter2}, by applying the general theory of multivariate ISRMs in \cite{paper}, multi operator-stable random measures and integrals are constructed. The set of integrable functions is characterized in terms of a quasi-norm. Moreover, it is shown in \cref{09011805}, that under a mild condition multi operator-stable ISRMs behave locally like an operator-stable ISRM. In \cref{chapter3} a multi operator-stable moving-average representation of a random field $\mathbb{X}=\{X(t):t \in \R^d\}$ is presented and its basic properties are analyzed. Finally, in \cref{chapter4}, we show that $\mathbb{X}$ behaves locally like an operator-stable random field which is $(E,D)$-operator-self-similar. 
\section{Multi operator-stable random measures and integrals} \label{chapter2}
Using the general theory of ISRMs developed in \cite{paper}, we now construct a large class of multivariate independently scattered random measures which are first of all infinitely-divisible. At the same time we refer the reader to \cite{JuMa93}, \cite{paper2} and \cite{thebook} concerning more details about operator-stable distributions which will play a crucial role throughout the paper. \\
In order to start with some notation, let $\Li{m}$ be the set of all linear operators on $\R^m$ and $\text{G}\Li{m}$ the corresponding subset of all invertible operators, represented as $m \times m$-matrices in each case. We write $\norm{\cdot}$ for the Euclidean norm on $\R^m$ (with inner product $\skp{\cdot}{\cdot}$) as well as for the operator norm on $\Li{m}$. Then, given some $D \in \Li{m}$, we denote its adjoint by $D^{*}$ and its trace by $\text{tr}(D)$, respectively. Moreover, let $\text{spec}(D)$ be the set of all eigenvalues of $D \in \Li{m}$. Hence we can define $ \lambda_D:= \min \{\text{Re } \lambda: \lambda \in \text{spec}(D)\} $ and $ \Lambda_D:= \max \{\text{Re } \lambda: \lambda \in \text{spec}(D)\}$. As usual the matrix exponential is given by 
\begin{equation*}
r^D=\exp((\ln r) D)=\summe{k=0}{\infty} \frac{ (\ln r)^k D^k}{k!} \quad \in \text{GL}(\R^m)
\end{equation*}
for any $r>0$ (see Proposition 2.2.2 in \cite{thebook} for more details). In this context we should recall from \cite{bms} that, given an operator $D \in Q(\R^m):=\{E \in \Li{m}: \lambda_D>0\}$, there exists a norm $\norm{\cdot}_D$ on $\R^m$ with unit sphere $S_D:=\{x \in \R^m: \norm{x}_D=1 \}$ such that the mapping $\Psi:(0,\infty) \times S_D \rightarrow \Gamma_m:=\R^m \setminus \{0\}$, defined by $(r,\theta) \mapsto r^{D} \theta$,  becomes a homeomorphism. We call $(\tau_D(x),l_D(x)):=\Psi^{-1}(x)$ the \textit{generalized polar coordinates of $x$ with respect to $D$}. Obviously we have $S_D=\{x:\tau_D(x)=1\}$ as well as $\tau_D(-x)=\tau_D(x)$ and $\tau_D(r^Ex)=r \, \tau_D(x)$ for any $r>0,x \in \Gamma_m$ by uniqueness, respectively. Hence $\tau_D$ is a so called \textit{$D$-homogeneous function} which even becomes continuous on $\R^m$, if we let $\tau_D(0)=0$. \\ 
If not stated otherwise, let $(S,\Sigma, \nu)$ be a $\sigma$-finite measure space and for any $s \in S$ we consider a symmetric operator $B(s) \in Q(\R^m)$ such that $s \mapsto B(s)$ is measurable. Then we define $\lambda(s):=\lambda_{B(s)}$ as well as $\Lambda(s):=\Lambda_{B(s)}$ and assume that 
\begin{equation*}
1/2 < \inf_{s \in S} \lambda(s) \le \sup_{s \in S} \Lambda(s)< \infty. 
\end{equation*}
Hence with $a:=\inf_{s \in S} \Lambda(s)^{-1}$ and $ b:=\sup_{s \in S} \lambda(s)^{-1}$ we have for any $s \in S$ that
\begin{equation} \label{eq:12071805}
  b^{-1} \le \lambda(s) \le \Lambda(s) \le a^{-1}, \quad \text{where} \quad 0<a \le b <2 .
\end{equation}
\begin{remark} \label{17111701}
Fix $s \in S$ and assume that $B(s)=\text{diag}(b_1(s),...,b_m(s))$ is diagonal. Then we get that $r^{B(s)}=\text{diag}(r^{b_1(s)},...,r^{b_m(s)})$ and the mapping $(0,\infty) \ni r \mapsto \norm{r^{B(s)} x}$ is strictly increasing for every $x \ne 0$. On the other hand, if $B(s)$ is merely symmetric, there exists an orthogonal matrix $O(s) \in$ GL$(\R^m)$ such that $r^{B(s)} = O(s) r^{D(s)} O(s)^{*}$ for some diagonal matrix $D(s)$ with $\text{spec}(D(s))=\text{spec}(B(s))$, see Proposition 2.2.2 in \cite{thebook}. Hence the previous observation remains true, since 
\begin{equation*}
\norm{O(s) r^{D(s)} O(s)^{*}x}=\norm{r^{D(s)} O(s)^{*}x} = \norm{r^{D(s)} y(s)}
\end{equation*}
for $y(s)= O(s)^{*}x \ne 0$. Overall and according to Lemma 6.1.5 in \cite{thebook} this allows us to assume that $\norm{\cdot}_{B(s)}=\norm{\cdot}$ and that $S_{B(s)}=S^{m-1}=\{x \in \R^m:\norm{x}=1\}$ for any $s \in S$. 
\end{remark}
Moreover, let $\sigma$ be a finite and symmetric measure on $(S^{m-1}, \B(S^{m-1}))$, where $\B(S^{m-1})$ denotes the corresponding collection of Borel sets. Then for every $s \in S$ the mapping
\begin{equation} \label{eq:15111730}
\varphi(s,C):=\integral{0}{\infty}{   \integral{S^{m-1}}{}{\indikator{C}{r^{B(s)} \theta} r^{-2}}{\sigma(d \theta)} }{dr}, \quad C \in \B(\R^m)
\end{equation}
defines a symmetric \textit{L\'{e}vy measure} on $\R^m$ as the proof of \cref{04101702} below will reveal. This means that $\varphi(s,\{0\})=0$ and $\int_{\R^m} \min \{1,\norm{x}^2\} \, \varphi(s,dx)< \infty$. By a change of variables we also observe that $t \cdot \varphi(s,C)= \varphi(s,t^{-B(s)}(C))$ for every $t>0$ and $C \in \B(\R^m)$. Hence Proposition 4.3.2 in \cite{JuMa93} and Theorem 3.1.11 in \cite{thebook} imply that for every $s \in S$ 
\begin{equation} \label{eq:17111720}
\psi_s(u):=  \integral{\R^m}{}{(\cos \skp{x}{u} -1 ) }{\varphi(s,dx)}, \quad u \in \R^m
\end{equation}
is the \textit{log-characteristic function} of a symmetric operator-stable distribution $\mu_s$ on $\R^m$, i.e. its Fourier transform is given by $\exp(\psi_s)$ and $\mu_s$ is particularly infinitely-divisible. In view of \cref{17111701} and Theorem 7.2.5 in \cite{thebook} we refer to $\sigma$ as the \textit{spectral measure} of $\mu_s$. Also note that $B(s)$ is called an \textit{exponent} of $\mu_s$ and that $\psi_s$ is a $B(s)$-homogeneous function. Finally $\mu_s$ should be \textit{full} in the sense of \cite{thebook} and therefore not concentrated on any hyperplane in $\R^m$. For this purpose we assume that the linear span of the support of $\sigma$ equals $\R^m$. \\ \quad \\
Turning over to independently scattered random measures recall the notation and results of section 3 in \cite{paper}. Consider the set $\mathcal{S}:=\{A \in \Sigma: \nu(A)< \infty\}$ and observe that this is a so called \textit{$\delta$-ring} on $S$ whose generated $\sigma$-algebra equals $\Sigma$ (see \cite{paper} again). Moreover, for every $A \in \mathcal{S}$ we can define a measure $\phi_A$ on $\R^m$ via $\phi_A(C):=\int_A \varphi(s,C) \, \nu(ds)$, since $s \mapsto B(s)$ is measurable. This completes our list of assumptions and we obtain the following result.
\begin{theorem2} \label{04101702}
Under the previous assumptions there exists an $\R^m$-valued ISRM on $\mathcal{S}$ which we denote by $\mathbb{M}=\{M(A):A \in \mathcal{S}\}$ and whose marginal distributions are uniquely determined by the relation $M(A) \sim [0,0,\phi_A]$ for every $A \in \mathcal{S}$. According to Theorem 3.1.11 in \cite{thebook} this means that the characteristic function of $M(A)$ is given by
\begin{equation} \label{eq:15111701}
\R^m \ni u \mapsto  \exp \left( \, \, \integral{\R^m}{}{(\cos \skp{x}{u} -1 ) }{\phi_A(dx)} \right)=\exp \left( \, \, \integral{A}{}{\psi_s(u) }{\nu(ds)} \right).
\end{equation}
We call $\mathbb{M}$ the multi operator-stable ISRM that is generated by $(\mu_s)_{s \in S}$ and $\nu$, abbreviated by $\mathbb{M} \sim (\mu_s,\nu)$. If $(\mu_s)_{s \in S}$ is constant with exponent $B$ we write $\mathbb{M} \sim (B,\sigma,\nu)$ instead.
\end{theorem2}
\begin{proof}
Let $c(s):=\int_{\R^m} \min \{1,\norm{x}^2\} \, \varphi(s,dx)$ and fix $A \in \mathcal{S}$. Then we will first prove that $\phi_A$ is a L\'{e}vy measure. By definition of $\phi_A$ and in view of $\nu(A)<\infty$ this reduces to the finiteness of $\sup_{s \in S} \, c(s)$, since
\begin{equation*} 
\integral{\R^m}{}{\min \{1,\norm{x}^2\}}{\phi_A(dx)}=\integral{A}{}{\integral{\R^m}{}{\min \{1,\norm{x}^2\}}{\varphi(s,dx)}  }{\nu(ds)} \le \nu(A) \cdot \sup_{s \in S} \,c(s).
\end{equation*}
Therefore fix $s \in S$. As in \cref{17111701} we may assume that $B(s)$ is even diagonal, say $B(s)=\text{diag}(b_1(s),...,b_m(s))$. Then for every $r_0>0$ we get that
\begin{equation} \label{eq:15111704}
 \norm{r^{B(s)}} \le C_0 \norm{r^{B(s)}}_1 \le C_0 \max \{r_0^{b_1(s)},...,r_0^{b_m(s)}\} \sum_{j}^{m} (r/r_0)^{b_j(s)} \le \begin{cases} C_1 r^{\lambda(s)}, & \text{$r \le r_0$} \\ C_2 r^{\Lambda(s)}, & \text{$r > r_0$} \end{cases}
 \end{equation}
by equivalence of norms. Here $C_0,C_1,C_2>0$ are suitable constants which only depend on $r_0$, because $b^{-1} \le b_j(s) \le a^{-1}$ for $j=1,...,m$. Recall \eqref{eq:12071805} and \eqref{eq:15111730}. Then, using \eqref{eq:15111704} for $r_0=1$, the following computation holds true, where we may assume that $C_1,C_2 \ge 1$.
\allowdisplaybreaks
\begin{align}
c(s)&= \integral{0}{\infty}{ \integral{S^{m-1}}{}{\min \{1,\norm{r^{B(s)}\theta}^2 \}r^{-2} }{\sigma(d \theta)} }{dr} \notag \\
& \le C_1^2 \integral{0}{1}{ \integral{S^{m-1}}{}{\min \{1,r^{2 \lambda(s)} \}r^{-2} }{\sigma(d \theta)} }{dr} + C_2^2 \integral{1}{\infty}{ \integral{S^{m-1}}{}{\min \{1,r^{2 \Lambda(s)} \}r^{-2} }{\sigma(d \theta)} }{dr} \notag \\
& \le C_1^2 \sigma(S^{m-1}) \integral{0}{1}{ r^{2/b-2} }{dr} + C_2^2 \sigma(S^{m-1})  \integral{1}{\infty}{ r^{-2} }{dr}  \label{eq:15111710}.
\end{align}
Note that \eqref{eq:15111710} is independent of $s$ and at the same time finite, since $b<2$. This shows that $\sup_{s \in S} \, c(s)<\infty$. Overall Theorem 3.1 in \cite{paper} implies the existence of some suitable probability space $(\Omega,\mathcal{A},\Pro)$ with $\mathbb{M}$ as asserted.
\end{proof}
\begin{remark} \label{16111701}
The fullness of $\varphi(s,\cdot)$ implies that $c(s)>0$ for every $s \in S$. Therefore $\nu$ is equivalent to the measure $\lambda_{\mathbb{M}}(ds):=c(s) \nu(ds)$. Under the given assumptions it can be easily verified that $\lambda_{\mathbb{M}}$ equals the so called \textit{control measure} of $\mathbb{M}$ in the sense of Theorem 3.2 in \cite{paper}. Also recall the related mappings $\rho_{\mathbb{M}}:S \times \B(\R^m) \rightarrow [0,\infty]$ and $K_{\mathbb{M}}:S \times \R^m \rightarrow \C$ from Proposition 3.5 in \cite{paper}. Actually, in the present case they can be computed as 
\begin{equation*}
\rho_{\mathbb{M}}(s,C)=c(s)^{-1}\varphi(s,C) \quad \text{and} \quad K_{\mathbb{M}}(s,u)=c(s)^{-1} \psi_s(u),
\end{equation*}
since the distributions $\mu_s$ and $M(A)$ are symmetric for every $s \in S$ and $A \in \mathcal{S}$, respectively. 
\end{remark} 
The two subsequent results use parts of the previous proof and will be essential afterwards. For this purpose we let $\norm{\cdot}^{a,b}:=\norm{\cdot}^a+\norm{\cdot}^b$ as well as $(\tau_s (\cdot),l_s(\cdot)):=(\tau_{B(s)}(\cdot),l_{B(s)}(\cdot))$.
\begin{cor} \label{24111750}
Fix $r_0>0$. Then, with the corresponding constants from \eqref{eq:15111704}, we have for all $\norm{x} \le r_0$ and $s \in S$ that
\begin{equation} \label{eq:24111730}
C_1(r_0)^{- 1/ \lambda(s)} \norm{x}^{1/ \lambda(s)} \le \tau_s(x) \le C_2(r_0^{-1})^{1/ \Lambda(s)}\,  \norm{x}^{1/ \Lambda(s)} 
\end{equation}
and, for all $\norm{x} \ge r_0$ and $s \in S$, that
\begin{equation} \label{eq:24111731}
C_1(r_0)^{- 1/ \lambda(s)} \norm{x}^{1/ \Lambda(s)} \le \tau_s(x) \le C_1(r_0^{-1})^{1/ \Lambda(s)}\,  \norm{x}^{1/ \lambda(s)} .
\end{equation}
Particularly, there exist constants $C_3,C_4>0$ such that
\begin{equation} \label{eq:24111732}
C_3 \min \{\norm{x}^{a}, \norm{x}^{b} \}  \le \tau_s(x) \le C_4 \max \{\norm{x}^{a}, \norm{x}^{b} \} \le C_4  \norm{x}^{a,b}
\end{equation}
holds for any $s \in S$ and $x \in \R^m$.
\end{cor}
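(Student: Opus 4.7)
The foundation is the generalized polar decomposition $x=\tau_s(x)^{B(s)}l_s(x)$ with $l_s(x)\in S^{m-1}$, which is legitimate thanks to \cref{17111701}. Taking Euclidean norms and remembering $\|l_s(x)\|=1$, this immediately yields the two-sided sandwich
\[
\bigl\|\tau_s(x)^{-B(s)}\bigr\|^{-1}\;\le\;\|x\|\;\le\;\bigl\|\tau_s(x)^{B(s)}\bigr\|,
\]
where the lower estimate uses $1=\|\tau_s(x)^{-B(s)}x\|\le\|\tau_s(x)^{-B(s)}\|\,\|x\|$.

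The strategy is to apply \eqref{eq:15111704} twice: first with threshold $r_0$ at $r=\tau_s(x)$ to control the right-hand side, and second with threshold $r_0^{-1}$ at $r=\tau_s(x)^{-1}$ (using $\tau_s(x)^{-B(s)}=(\tau_s(x)^{-1})^{B(s)}$) to control the left-hand side. Each application splits into the subcases ``$\tau_s(x)\le$ threshold'' and ``$\tau_s(x)>$ threshold''; substituting the resulting operator-norm bounds into the sandwich and solving for $\tau_s(x)$ converts each subcase into one of the four power-law inequalities appearing in \eqref{eq:24111730} and \eqref{eq:24111731}. When $\|x\|\le r_0$, the matching subcase $\tau_s(x)\le r_0$ from the first application yields the lower estimate with exponent $1/\lambda(s)$ and constant $C_1(r_0)^{-1/\lambda(s)}$, while the matching subcase $\tau_s(x)<r_0$ from the second yields the upper estimate with exponent $1/\Lambda(s)$ and constant $C_2(r_0^{-1})^{1/\Lambda(s)}$; this is precisely \eqref{eq:24111730}. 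An entirely symmetric argument, exchanging the roles of $r_0$ and $r_0^{-1}$ (and of $\lambda(s)$ and $\Lambda(s)$), yields \eqref{eq:24111731} for $\|x\|\ge r_0$.

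The main obstacle is the bookkeeping for the ``mismatched'' subcases, for instance $\|x\|\le r_0$ combined with $\tau_s(x)>r_0$. In such a situation one uses $\tau_s(x)>r_0\ge\|x\|$ together with the uniform eigenvalue control $b^{-1}\le\lambda(s)\le\Lambda(s)\le a^{-1}$ from \eqref{eq:12071805} to verify that the claimed inequalities persist after enlarging the constants if necessary. For the uniform bound \eqref{eq:24111732} one specialises to $r_0=1$: the two regimes of \eqref{eq:24111730}--\eqref{eq:24111731} then meet and the $r_0$-dependence disappears. Since $1/\Lambda(s),1/\lambda(s)\in[a,b]$ again by \eqref{eq:12071805}, both $\|x\|^{1/\Lambda(s)}$ and $\|x\|^{1/\lambda(s)}$ lie in $[\min\{\|x\|^a,\|x\|^b\},\,\max\{\|x\|^a,\|x\|^b\}]$, and the final estimate $\max\{\|x\|^a,\|x\|^b\}\le\|x\|^{a,b}$ is immediate from the definition of $\|\cdot\|^{a,b}$.
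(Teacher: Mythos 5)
Your argument is correct and follows essentially the same route as the paper, whose proof simply invokes a straightforward extension of the proof of Lemma 2.1 in \cite{bms}: that argument is precisely your polar-decomposition sandwich $\bigl\|\tau_s(x)^{-B(s)}\bigr\|^{-1}\le\|x\|\le\bigl\|\tau_s(x)^{B(s)}\bigr\|$ combined with the operator-norm bounds \eqref{eq:15111704} applied at $r=\tau_s(x)$ and $r=\tau_s(x)^{-1}$. The only caveat is that, as you acknowledge, in the mismatched subcases the inequalities survive merely up to an enlargement of the constants, so the specific constants displayed in \eqref{eq:24111730}--\eqref{eq:24111731} should not be read literally; the paper is equally loose on this point, and the uniform estimate \eqref{eq:24111732} (the only part used later) is unaffected.
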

\begin{proof}
In view of \cref{17111701} and \eqref{eq:15111704} the inequalities in \eqref{eq:24111730} and \eqref{eq:24111731} follow by a straight-forward extension of the proof of Lemma 2.1 in \cite{bms}. Due to our assumptions on $B(s)$ they also yield the additional statement (consider $r_0=1$ for example).
\end{proof}
\begin{lemma} \label{23111710}
For any $0<\rho_1 \le \rho_2$ we define the set $T_{\rho_1,\rho_2}:= \{x \in \R^m: \rho_1 \le \norm{x} \le \rho_2\}$. Then there exist $K_1,K_2>0$ (depending on $\rho_1,\rho_2$ only) such that 
\begin{equation*}
K_1 \le |\psi_s(u)| \le K_2 \quad \text{for all $s \in S$ and $u \in T_{\rho_1,\rho_2}$ }.
\end{equation*}
\end{lemma}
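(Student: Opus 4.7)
The plan is to use the $B(s)$-homogeneity of $\psi_s$ to reduce both bounds to uniform estimates of $|\psi_s(w)|$ over $w \in S^{m-1}$, and then to extract those via a compactness argument in the matrix parameter. The upper bound is almost immediate: using $1 - \cos t \le \min\{2, t^2/2\}$ and Cauchy--Schwarz,
\begin{equation*}
|\psi_s(u)| \le \integral{\R^m}{}{\min\{2, \tfrac{1}{2}\norm{u}^2 \norm{x}^2\}}{\varphi(s,dx)} \le \max\{2, \rho_2^2/2\} \, c(s),
\end{equation*}
and $\sup_{s \in S} c(s) < \infty$ was already established in the proof of \cref{04101702}, yielding $K_2$.

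For the lower bound I would first record that $\psi_s$ is $B(s)$-homogeneous in the sense $\psi_s(t^{B(s)} u) = t \psi_s(u)$ for $t > 0$; this follows from $B(s)^* = B(s)$ together with the scaling $t \cdot \varphi(s, C) = \varphi(s, t^{-B(s)}(C))$ via a direct change of variables in the integral defining $\psi_s$. Using \cref{17111701} to place $u \in T_{\rho_1,\rho_2}$ into generalized polar coordinates $u = \tau_s(u)^{B(s)} l_s(u)$ with $l_s(u) \in S^{m-1}$, this identity becomes
\begin{equation*}
|\psi_s(u)| = \tau_s(u) \, |\psi_s(l_s(u))|.
\end{equation*}
\cref{24111750} yields uniform bounds $c_1 \le \tau_s(u) \le c_2$ on $T_{\rho_1,\rho_2}$ depending only on $\rho_1,\rho_2,a,b$, so it suffices to produce $K_1' > 0$ with $|\psi_s(w)| \ge K_1'$ for all $s \in S$ and $w \in S^{m-1}$.

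To that end, introduce the set $\mathcal{K} := \{B \in \Li{m} : B = B^*,\ \operatorname{spec}(B) \subset [b^{-1}, a^{-1}]\}$, which is a closed, bounded (hence compact) subset of a finite-dimensional space and contains every $B(s)$. Define
\begin{equation*}
\Psi(B, w) := \integral{0}{\infty}{\integral{S^{m-1}}{}{(\cos \skp{r^B \theta}{w} - 1) r^{-2}}{\sigma(d\theta)}}{dr}, \qquad (B, w) \in \mathcal{K} \times S^{m-1},
\end{equation*}
so that $\psi_s(w) = \Psi(B(s), w)$. Rerunning the estimate from the proof of \cref{04101702} with $r_0 = 1$ and the uniform spectral bounds $\lambda_B, \Lambda_B \in [b^{-1}, a^{-1}]$ produces an integrable majorant on $(0,\infty) \times S^{m-1}$ independent of $(B, w) \in \mathcal{K} \times S^{m-1}$ (the crucial input being $b < 2$), so dominated convergence gives continuity of $\Psi$. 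Moreover $\Psi < 0$ pointwise on $\mathcal{K} \times S^{m-1}$: if $\Psi(B, w) = 0$, Fubini forces $\skp{r^B\theta}{w} \in 2\pi\Z$ for $\sigma$-a.e.\ $\theta$ and Lebesgue-a.e.\ $r > 0$; continuity in $r$ combined with $r^B\theta \to 0$ as $r \downarrow 0$ (since $\lambda_B > 0$) then makes $r \mapsto \skp{r^B \theta}{w}$ identically zero, and evaluation at $r=1$ gives $\skp{\theta}{w} = 0$ for $\sigma$-a.e.\ $\theta$, contradicting $\operatorname{span}(\operatorname{supp}(\sigma)) = \R^m$. Compactness of $\mathcal{K} \times S^{m-1}$ together with continuity then yields $K_1' := -\max \Psi > 0$, and combining with the lower bound on $\tau_s(u)$ gives $K_1$.

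I expect the main obstacle to be the last step, which requires wiring together two ingredients: an integrable majorant uniform over $\mathcal{K} \times S^{m-1}$ for the dominated convergence argument (forcing careful bookkeeping of the spectral bounds in the estimate of \cref{04101702}), and the strict-sign statement $\Psi < 0$ obtained from fullness of $\sigma$ via the continuity-in-$r$ argument. Everything else—the upper bound, the homogeneity identity, and the reduction to $S^{m-1}$—is routine manipulation.
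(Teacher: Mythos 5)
Your proof is correct, but the lower bound is obtained by a genuinely different route from the paper's. The paper does not pass to the unit sphere: it works directly on the compact annulus $T_{\rho_1,\rho_2}$, sets $m(s):=\min_{u\in T_{\rho_1,\rho_2}}|\psi_s(u)|>0$ (positivity being quoted from Corollary 7.1.12 in \cite{thebook}), and argues by contradiction: if $\inf_s m(s)=0$, sequential compactness of the admissible exponent set gives $B(s_n)\to B^*$, dominated convergence gives $\mu_{s_n}\to\mu^*$ weakly, and L\'evy's continuity theorem forces $\ft{\mu^*}(u^*)=1$ at some $u^*\ne 0$, contradicting the same corollary. You instead use the $B(s)$-homogeneity $\psi_s(t^{B(s)}u)=t\,\psi_s(u)$ together with the uniform two-sided bounds on $\tau_s$ from \cref{24111750} to reduce to $S^{m-1}$, then run a direct compactness-plus-continuity argument for $\Psi(B,w)$ on $\mathcal{K}\times S^{m-1}$, proving strict negativity from scratch via the Fubini/continuity-in-$r$ argument and fullness of $\sigma$. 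Both proofs rest on the same two pillars (compactness of the exponent set and fullness of the spectral measure); yours is more self-contained, since it reproves the nondegeneracy statement rather than citing Corollary 7.1.12, and it produces the constant $K_1$ constructively as $(-\max\Psi)$ times a uniform lower bound for $\tau_s$ on the annulus, whereas the paper's contradiction argument is shorter given its black boxes but yields no explicit constant. One small remark: the homogeneity reduction to the sphere is not strictly needed, since $T_{\rho_1,\rho_2}$ is itself compact and your continuity-and-negativity argument for $\Psi$ would run verbatim on $\mathcal{K}\times T_{\rho_1,\rho_2}$; but it does no harm and the uniform bounds on $\tau_s$ you invoke are exactly those of \cref{24111750}, which precedes this lemma, so there is no circularity.
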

\begin{proof}
The upper bound follows from $\sup_{s \in S} c(s)< \infty$ (see the previous proof) and the Cauchy-Schwarz inequality, since we have
\begin{equation} \label{eq:13071801}
|\psi_s(u)| = \integral{\R^m}{}{(1-\cos \skp{x}{u} ) }{\varphi(s,dx)} \le 2 \integral{\R^m}{}{ \min \{1,\skp{x}{u}^2 \}}{\varphi(s,dx)} \le 2 (1+\rho_2^2) \, c(s)
\end{equation}
for any $u \in T_{\rho_1,\rho_2}$. For the lower bound we first note that $\psi_s(u) \ne 0 $ for any $s \in S$ and $u \ne 0$ due to Corollary 7.1.12 in \cite{thebook}. Hence, for any $s \in S$, there exists some $u_s \in  T_{\rho_1,\rho_2}$ such that 
\begin{equation*}
m(s):=\min_{u \in T_{\rho_1,\rho_2}} |\psi_s(u)|= - \psi_s(u_s) > 0
\end{equation*}
by continuity. If we suppose that $K_1:= \inf_{s \in S} m(s) =0$, this yields a sequence $(s_n) \subset S$ with $m(s_n) \rightarrow 0$. On the other hand the set 
\begin{equation*}
 \mathcal{C}:= \{\text{$B \in \text{GL}(\R^m)$ symmetric}: b^{-1} \le \lambda_B \le \Lambda_B \le 	a^{-1} \}
 \end{equation*}
 is bounded, since $\norm{B}=\Lambda_{B}$ in this case. Additionally and in view of the \textit{Courant-Fischer Theorem} (see Problem 4.4 in \cite{teschl} for example) it is closed. Hence, without loss of generality, we can assume that $B(s_n) \rightarrow B^{*}$ for some $B^{*} \in \mathcal{C}$. As before this allows us to consider a full and symmetric operator-stable distribution $\mu^{*}$ on $\R^m$ with exponent $B^{*}$ and spectral measure $\sigma$. Denote its log-characteristic function by $\psi^{*}$ and fix $u \in \R^m$. Then, according to \eqref{eq:13071801}, we observe for every $r>0,\theta \in S^{m-1}$ and $n \in \N$ that 
 \begin{equation} \label{eq:13071806}
  1-\cos \skp{r^{B(s_n)} \theta }{u}  \le 2 (1+\norm{u}^2) \min \{1,\norm{r^{B(s_n)} \theta}^2\},
 \end{equation}
where the left hand-side of \eqref{eq:13071806} tends to $1-\cos \skp{r^{B^{*}} \theta }{u}$ as $n \rightarrow \infty$. In view of
\begin{equation*}
- \psi_{s_n}(u) = \integral{0}{\infty}{  \integral{S^{m-1}}{}{  (1-\cos \skp{r^{B(s_n)} \theta }{u} ) r^{-2}}{\sigma(d \theta)} }{dr}
\end{equation*}
we can argue as in the proof of \cref{04101702} to verify that \eqref{eq:13071806} yields $\psi_{s_n}(u) \rightarrow \psi^{*}(u)$ by dominated convergence. This means that $\mu_{s_n} \rightarrow \mu^{*}$ weakly, since $u \in \R^m$ was arbitrary. By passing to a subsequence we can assume that $u_{s_n} \rightarrow u^{*}$ for some $u^{*} \in T_{\rho_1,\rho_2}$. However, using L\'{e}vy's continuity theorem this implies for the corresponding Fourier transforms that
\begin{equation*}
\ft{\mu^{*}}(u^{*})= \limes{n}{\infty} \ft{\mu_{s_n}}(u_{s_n}) = \limes{n}{\infty} \exp (- m(s_n))= 1
\end{equation*}
which contradicts Corollary 7.1.12 in \cite{thebook}, since $u^{*} \ne 0$.
\end{proof}
Next we want to consider stochastic integrals of the form $I_{\mathbb{M}}(f):=I(f):=\int_S f(s) \, \mathbb{M}(ds)$ for suitable matrix-valued functions $f:S \rightarrow \Li{m}$ which leads to $\R^m$-valued random vectors. The underlying definition is quite natural for simple functions $f$ and results from a stochastic limit for general $f \in \mathcal{I}(\mathbb{M})$, where
\begin{equation*}
\mathcal{I}(\mathbb{M}):=\{f: \text{$f$ is measurable and integrable with respect to } \mathbb{M}\}. 
\end{equation*}
Then $\mathcal{I}(\mathbb{M})$ becomes a real vector space and the mapping $f \mapsto I(f)$ is linear on $\mathcal{I}(\mathbb{M})$. We refer to \cite{paper} for more details and subsequently merely apply these results to our setting. Also note that we will identify random vectors which are equal almost surely and that $\mathbb{M}(A)=I(\mathbb{E}_m \indikatorzwei{A})$, where $\mathbb{E}_m$ denotes the identity matrix on $\R^m$. Finally, for $f:S \rightarrow \R$, we interpret $I(f)$ implicitly as $I(f \, \mathbb{E}_m)$. Defining 
\begin{align}
V_{\mathbb{M}}: \Li{m} \times S \rightarrow \R_{+}, \quad &(E,s) \mapsto \integral{\R^m}{}{\min \{1,\norm{Ex}^2\} }{\varphi(s,dx)} , \label{eq:24111701}
\end{align}
we obtain the following result.
\begin{prop} \label{11101701}
Let $\mathbb{M}$ be a multi operator-stable ISRM as in \cref{04101702}.
\begin{itemize}
\item[(a)] For measurable $f:S \rightarrow \Li{m}$ the following three statements are equivalent:
\begin{itemize}
\item[(i)] $f \in \mathcal{I}(\mathbb{M})$.
\item[(ii)] The integral
\begin{align*}
 \integral{S}{}{ V_{\mathbb{M}}(f(s),s) }{\nu(ds)} =  \integral{\R^m}{}{\min \{1, \norm{x}^2\}  }{\phi_f(dx)},
\end{align*}
is finite, where
\begin{equation*}
\phi_f(A):= \integral{S}{}{\integral{\R^m}{}{\indikatorzwei{f(s)x \in A \setminus \{0\}} }{\varphi(s,dx)}  }{\nu(ds)}, \quad A \in \B(\R^m).
\end{equation*}
In this case we have $I(f) \sim [0,0, \phi_f]$.
\item[(iii)] The mapping
\begin{equation*} 
\R^m \ni u \mapsto \integral{S}{}{\psi_s(f(s)^{*}u)}{\nu(ds)}=\integral{\R^m}{}{(\cos \skp{u}{x}-1)}{\phi_f(dx)}
\end{equation*}
is well-defined (i.e. the integral exists for every $u \in \R^m$) and continuous.
\end{itemize}
\item[(b)] Assume that $f_1,...,f_n \in \mathcal{I}(\mathbb{M})$, then the $\R^{n \cdot m}$-valued random vector $(I(f_1),...,I(f_n))$ is infinitely-divisible and its log-characteristic function is given by
\begin{equation} \label{eq:17111740}
\R^{n \cdot m} \ni (u_1,...,u_n) \mapsto \integral{S}{}{\psi_s \left( \summe{j=1}{n} f_j(s)^{*}u_j \right) }{\nu(ds)}.
\end{equation}
\item[(c)] Let $f \in \mathcal{I}(\mathbb{M})$. Furthermore suppose that $f(s) \in \text{GL}(\R^m)$ for every $s \in A$, where $A \subset S$ is measurable with $\nu(A)>0$. Then $I(f)$ is full.
\end{itemize}
\end{prop}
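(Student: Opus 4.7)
My guiding principle is that the three statements are instantiations of the general ISRM integration theory from \cite{paper} in our explicit setting, where the control measure $\lambda_{\mathbb{M}}$ and the kernels $\rho_{\mathbb{M}}$, $K_{\mathbb{M}}$ are given by \cref{16111701}. For part (a), the first step is to recognize that $\phi_f$ is precisely the image of $\varphi(s,dx)\otimes\nu(ds)$ under the map $(x,s)\mapsto f(s)x$ restricted to $\R^m\setminus\{0\}$; Tonelli together with the change of variables $y=f(s)x$ in the inner integral then yields
\eq{\integral{S}{}{V_{\mathbb{M}}(f(s),s)}{\nu(ds)} = \integral{\R^m}{}{\min\{1,\norm{x}^2\}}{\phi_f(dx)}.}
The equivalence (i)$\Leftrightarrow$(ii) then follows from the integrability characterization in \cite{paper} specialized via \cref{16111701}, because the shift and Gaussian components of each $\mu_s$ vanish by symmetry. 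For (ii)$\Leftrightarrow$(iii), I would use the adjoint identity
\eq{\psi_s(f(s)^{*}u)=\integral{\R^m}{}{(\cos\skp{f(s)x}{u}-1)}{\varphi(s,dx)}}
together with Tonelli to identify the two expressions in (iii), and then invoke dominated convergence (bounding with $\min\{2,\skp{x}{u}^2\}$) to obtain well-definedness and continuity from (ii); conversely, (iii) forces (ii) via the standard L\'{e}vy–Khintchine criterion applied to the resulting infinitely-divisible limit law.

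For part (b), the standard approach is to verify the formula first on simple functions over a common refining partition $\{A_k\}\subset\mathcal{S}$: writing $I(f_j)=\sum_k F_{j,k}M(A_k)$ and exploiting independence of the $M(A_k)$ together with \eqref{eq:15111701} gives the Fourier identity
\eq{\bigerwartung{\exp\Bigl(\im\summe{j=1}{n}\skp{u_j}{I(f_j)}\Bigr)} = \exp\Bigl(\integral{S}{}{\psi_s\bigl(\summe{j=1}{n} f_j(s)^{*}u_j\bigr)}{\nu(ds)}\Bigr).}
Approximation by simple functions (valid by (a) and the construction of $I$ in \cite{paper}) extends this to general $f_j\in\mathcal{I}(\mathbb{M})$, and the L\'{e}vy–Khintchine form of the right-hand side yields infinite divisibility of the joint vector.

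For part (c), I would argue by contradiction: if $I(f)$ were not full, there would exist $u\ne 0$ with $\skp{u}{I(f)}=0$ almost surely, so the characteristic function of $I(f)$ at $u$ equals $1$ and $\int_S\psi_s(f(s)^{*}u)\,\nu(ds)=0$. Since $\psi_s\le 0$ pointwise, this forces $\psi_s(f(s)^{*}u)=0$ for $\nu$-a.e.\ $s$; fullness of each $\mu_s$ (Corollary 7.1.12 in \cite{thebook}) then gives $f(s)^{*}u=0$ for $\nu$-a.e.\ $s$, which on the set $A$ contradicts $f(s)^{*}\in\mathrm{GL}(\R^m)$. The main obstacle will be in part (a), where correctly matching our framework with the abstract integrability criterion from \cite{paper} (phrased in terms of the control measure and kernel) requires careful bookkeeping; once this is done, parts (b) and (c) are comparatively routine.
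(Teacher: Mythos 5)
Your proposal is correct and follows essentially the same route as the paper, which simply reduces (a) and (b) to Proposition 3.3 and Section 5 of \cite{paper} (using that $\mu_s \sim [0,0,\varphi(s,\cdot)]$ with $\varphi(s,\cdot)$ symmetric, so no Gaussian or shift terms appear) and obtains (c) from \eqref{eq:17111740}, Lemma 1.3.11 in \cite{thebook} and a refinement of Proposition 2.6(a) in \cite{paper2}. Your explicit contradiction argument for (c) -- using that $\psi_s \le 0$, that $\psi_s(v)=0$ forces $v=0$ by fullness of $\mu_s$, and that $f(s)^{*}u=0$ is impossible on $A$ -- is precisely the content of that cited refinement, so nothing is missing.
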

\begin{proof}
Recall \cref{16111701} and that $\mu_s \sim [0,0,\varphi(s,\cdot)]$ with $\varphi(s,\cdot)$ being symmetric for every $s \in S$. Then (a) and (b) follow immediately from Proposition 3.3 and section 5 of \cite{paper}, respectively. Also recall that $\mu_s$ is full for every $s \in S$. Hence, using \eqref{eq:17111740} and Lemma 1.3.11 in \cite{thebook}, a slight refinement of Proposition 2.6 (a) in \cite{paper2} gives part (c). The details are left to the reader.
\end{proof}
For the case $m=1$ monotonicity arguments come into effect and Theorem 3.3 in \cite{RajRo89} implies that $f \in \mathcal{I}(\mathbb{M})$ if and only if $|f| \in \mathcal{I}(\mathbb{M})$. Actually, this allows to identify $\mathcal{I}(\mathbb{M})$ as a so called \textit{Musielak-Orlicz space} (see \cite{musielak} fore more details) in their framework. Unfortunately, this characterization will fail in the case $m>1$, because $f \in \mathcal{I}(\mathbb{M})$ is not equivalent to $\norm{f}  \in \mathcal{I}(\mathbb{M})$ in general. Moreover, \cref{16111703} and \cref{09031801} below will even show that the condition
\begin{equation*}
f_{i,j} \in \mathcal{I}(\mathbb{M}) \quad \text{for every $i,j=1,...,m$}
\end{equation*}
is sufficient, but not necessary for $f=(f_{i,j})_{i,j=1,...,m}$ to be integrable with respect to $\mathbb{M}$. However, for every measurable $f:S \rightarrow \Li{m}$ and $\lambda>0$ we can define  
\begin{equation} \label{eq:16111710}
H(f,\lambda):= \integral{S}{}{ \underset{\norm{u}_{\infty} \le \lambda^{-1}}{\sup} \tau_{s}(f(s)^{*}u)}{\nu(ds)} \quad \in [0,\infty],
\end{equation}
where $\norm{\cdot}_{\infty}$ is the supremum norm on $\R^m$. In order to benefit from $H(f,\lambda)$ we first consider the next statement which is similar to Lemma 4.4 in \cite{paper}.
\begin{lemma}
Let $X$ be an $\R^m$-valued random vector with characteristic function $\omega(\cdot)$. Then there exists a $C>0$ such that the following inequalities hold for any $\delta>0$.
\begin{equation} \label{eq:12101701}
\Pro (\norm{X} \ge  \delta) \le C \, \delta^m \integral{ \norm{u}_{\infty} \le \delta^{-1} }{}{(1-\omega(u))}{du} \le C \, \delta^m \integral{  \norm{u}_{\infty} \le \delta^{-1}}{}{|1-\omega(u)|}{du}.
\end{equation}
\end{lemma}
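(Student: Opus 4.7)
The right-hand inequality in \eqref{eq:12101701} is essentially free: since $1-\omega(u)=\erwartung{1-e^{\im \skp{u}{X}}}$ has odd imaginary part in $u$ and the cube $\{u:\norm{u}_{\infty}\le\delta^{-1}\}$ is symmetric about $0$, the complex integral on the left is real and equals $\int_{\norm{u}_{\infty}\le\delta^{-1}}(1-\operatorname{Re}\omega(u))\,du$. As $\operatorname{Re}\omega(u)\le|\omega(u)|\le 1$, the pointwise estimate $1-\operatorname{Re}\omega(u)\le|1-\omega(u)|$ delivers the second inequality.

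For the left-hand inequality, the plan is to rewrite the integral via Fubini and evaluate it explicitly over the product region $[-\delta^{-1},\delta^{-1}]^m$. Using $e^{\im \skp{u}{X}}=\prod_{j=1}^m e^{\im u_j X_j}$ and $\int_{-\delta^{-1}}^{\delta^{-1}} e^{\im u_j X_j}\,du_j=2\sin(X_j/\delta)/X_j$ (with the natural limit $2\delta^{-1}$ at $X_j=0$) yields the pointwise identity
\begin{equation*}
\integral{\norm{u}_{\infty}\le\delta^{-1}}{}{(1-\omega(u))}{du}=\bigerwartung{(2\delta^{-1})^m-\produkt{j=1}{m}\frac{2\sin(X_j/\delta)}{X_j}}.
\end{equation*}
The bracketed expression equals $\int_{[-\delta^{-1},\delta^{-1}]^m}(1-\cos\skp{u}{X})\,du$ and is therefore pointwise non-negative in $X$.

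The crux is to bound this bracket below by a positive multiple of $(2\delta^{-1})^m$ on the event $\{\norm{X}\ge\delta\}$. On that event, $\max_j|X_j|\ge\delta/\sqrt{m}$, so at least one index $j_0$ satisfies $|X_{j_0}/\delta|\ge 1/\sqrt{m}$. The function $g(y):=\sin(y)/y$ (with $g(0):=1$) is continuous, satisfies $|g(y)|\le 1$ with equality only at $y=0$, and tends to $0$ at infinity; hence $\gamma:=\sup_{|y|\ge 1/\sqrt{m}}|g(y)|<1$. Using $|g(X_{j_0}/\delta)|\le\gamma$ together with the trivial bound $|g(X_j/\delta)|\le 1$ for $j\ne j_0$ gives
\begin{equation*}
\left|\produkt{j=1}{m}\frac{2\sin(X_j/\delta)}{X_j}\right|=(2\delta^{-1})^m\produkt{j=1}{m}\bigl|g(X_j/\delta)\bigr|\le\gamma\,(2\delta^{-1})^m
\end{equation*}
on $\{\norm{X}\ge\delta\}$, so the bracket is at least $(1-\gamma)(2\delta^{-1})^m\,\indikatorzwei{\{\norm{X}\ge\delta\}}$. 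Taking expectations and rearranging produces the required bound with $C=1/(2^m(1-\gamma))$.

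The only non-routine point is the strict inequality $\gamma<1$, which follows from the listed properties of $g$ (continuity and decay at infinity force the supremum over $\{|y|\ge c\}$ to be attained either at a point where $|g|<1$ or to be dominated by an arbitrarily small tail); everything else is a direct Fubini/Fourier computation on the cube.
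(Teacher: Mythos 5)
Your proof is correct and follows essentially the same route as the paper: the identity $\int_{\|u\|_\infty\le\delta^{-1}}(1-\omega(u))\,du=(2/\delta)^m\,\mathbb{E}\bigl[1-\prod_{j=1}^m g(X_j/\delta)\bigr]$ (Pr\'ekopa's idea), the inclusion $\{\|x\|\ge\delta\}\subset\{\|x\|_\infty\ge\delta/\sqrt m\}$, and the bound $\sup_{|y|\ge 1/\sqrt m}|g(y)|<1$, yielding the same constant $C=(2^m(1-\gamma))^{-1}$. You merely spell out a few points the paper leaves implicit (the pointwise non-negativity of the bracket and the realness of the integral), which is fine.
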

\begin{proof}
Define $g(y):=\sin(y)/y$ for $y \ne 0$ and $g(0):=1$. Hence $|g(y)| \le 1$, while we can find some $c \in (0,1)$ such that $|g(y)| \le c $ for any $|y| \ge 1/ \sqrt{m} $. Then, using the univariate idea of (1.2) in \cite{Pre56}, we obtain that
\begin{equation} \label{eq:13071810}
 \integral{ \norm{u}_{\infty} \le \delta^{-1}}{}{(1-\omega(u))}{du} = 2^m \delta^{-m} \integral{\R^m}{}{ \left[ 1-\prod_{j=1}^m g(\delta^{-1} x_j) \right ] }{\mathcal{L}(X)(dx)},
\end{equation}
where $x=(x_1,...,x_m)$ and where $\mathcal{L}(X)$ denotes the distribution of $X$. Finally, a truncation of the right-hand side integral in \eqref{eq:13071810} on $ \{x: \norm{x} \ge \delta \} \subset \{x: \norm{x}_{\infty} \ge \delta/ \sqrt{m}\}$ gives the assertion with $C:= (2^m(1- c))^{-1}$. Note that all expressions in \eqref{eq:12101701} are real.
\end{proof}
Now we can prove the main result of this section which, by the way, generalizes Proposition 2.3 in \cite{falconer3}. Here we call $\norm{\cdot}_{\mathbb{M}}$ a \textit{quasi-norm} on $\mathcal{I}(\mathbb{M})$, if it has the usual properties of a norm, except a possible weakening of the triangular inequality, i.e. there is some $T \ge 1$ such that 
\begin{equation} \label{eq:12181010}
\norm{f_1+f_2}_{\mathbb{M}} \le T (\norm{f_1}_{\mathbb{M}}+\norm{f_2}_{\mathbb{M}})
\end{equation}
holds for all $f_1,f_2 \in \mathcal{I}(\mathbb{M})$. Moreover, we identify elements in $ \mathcal{I}(\mathbb{M})$ that are identical $\nu$-a.e., since $\nu$ is equivalent to the control measure $\lambda_{\mathbb{M}}$ and since $I_{\mathbb{M}}(f)=I_{\mathbb{M}}(g)$ a.s. if and only if $f=g$ holds true $\lambda_{\mathbb{M}}$-almost everywhere (see \cite{paper}).
\begin{theorem} \label{16111705}
Let $\mathbb{M}$ be as before. 
\begin{itemize}
\item[(a)] Recall \eqref{eq:16111710}. Then the following identities hold: 
\begin{align}
\mathcal{I}(\mathbb{M})  &= \{f:S \rightarrow \Li{m} \, | \, \text{$f$ is measurable and $H(f,\lambda)< \infty$ for all $\lambda>0$} \}  \notag \\
&= \{f:S \rightarrow \Li{m} \, | \, \text{$f$ is measurable and $H(f,\lambda)< \infty$ for some $\lambda>0$} \}. \notag
\end{align}
\item[(b)] We get a quasi-norm on $\mathcal{I}(\mathbb{M})$ in virtue of
\begin{equation*}
\norm{f}_{\mathbb{M}}:= \inf \{\lambda>0: H(f,\lambda) \le 1 \}.
\end{equation*} 
Moreover, for any $f \in \mathcal{I}(\mathbb{M})$ and $\lambda>0 $ we have
\begin{equation} \label{eq:10101802}
C_3  \min \left \{ \left (\frac{\norm{f}_{\mathbb{M}}}{\lambda} \right)^a , \left (\frac{\norm{f}_{\mathbb{M}}}{\lambda} \right)^b \right \} \le  H(f, \lambda) \le C_4 \max \left \{ \left (\frac{\norm{f}_{\mathbb{M}}}{\lambda} \right)^a , \left (\frac{\norm{f}_{\mathbb{M}}}{\lambda} \right)^b \right \},
\end{equation}
where the constants $C_3,C_4>0$ are those from \eqref{eq:24111732}.
\item[(c)] There exists some $L_1 \ge 1$ such that 
\begin{equation}  \label{eq:12031801}
\Pro(\norm{I(f)} \ge \delta) \le L_1 H(f,\delta)
\end{equation}
holds for any $f \in \mathcal{I}(\mathbb{M})$ and $\delta>0$. Additionally, if $0< p<a$, there exists a constant $L_2=L_2(p)>0$ such that
\begin{equation*}
\erwartung{\norm{I(f)}^p} \le L_2 \norm{f}_{\mathbb{M}}^{p}.
\end{equation*}
\item[(d)] The vector space $\mathcal{I}(\mathbb{M})$ is complete with respect to $\norm{\cdot}_{\mathbb{M}}$ and, for $f,f_1,... \in \mathcal{I}(\mathbb{M})$, we have the characterization
\begin{equation} \label{eq:10101801}
I(f_n) \rightarrow I(f) \text{ in probability } \quad \Leftrightarrow \quad  \norm{f_n-f}_{\mathbb{M}} \rightarrow 0.
\end{equation}
\end{itemize} 
\end{theorem}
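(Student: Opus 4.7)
The proof will rest on replacing the operator-stable log-characteristic function $\psi_s$ by the tractable functional $\tau_s$ throughout. Since $\psi_s$ is $B(s)$-homogeneous, the polar decomposition gives $\psi_s(u) = \tau_s(u)\,\psi_s(l_s(u))$, and \cref{23111710} applied with $\rho_1 = \rho_2 = 1$ bounds $|\psi_s(\theta)|$ between positive constants $K_1, K_2$ for $\theta \in S^{m-1}$, uniformly in $s$. This yields
\begin{equation*}
K_1\,\tau_s(u) \;\le\; |\psi_s(u)| \;\le\; K_2\,\tau_s(u) \qquad (s \in S,\, u \in \R^m),
\end{equation*}
which, together with the size estimates of \cref{24111750}, reduces every step to an explicit manipulation of $\tau_s$.

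For part (a), \cref{11101701}(a)(iii) characterizes $\mathcal{I}(\mathbb{M})$ by finiteness and continuity of $u \mapsto \int_S \psi_s(f(s)^{*} u)\,\nu(ds)$; the preceding comparison shows this is equivalent to the analogous property for $u \mapsto \int_S \tau_s(f(s)^{*} u)\,\nu(ds)$. Using generalized polar coordinates to derive $\tau_s(cw) \le C_4 \max\{c^a, c^b\}\,\tau_s(w)$ uniformly in $s, w, c$, one sees that finiteness at a single nonzero $u$ promotes to finiteness of $H(f,\lambda)$ for every $\lambda > 0$. For part (b), the change of variables $v = c u$ gives $H(cf,\lambda) = H(f, c^{-1}\lambda)$ and hence $\norm{cf}_{\mathbb{M}} = |c|\,\norm{f}_{\mathbb{M}}$ (using $\tau_s(-x) = \tau_s(x)$); positive definiteness is forced by $H(f,\lambda) \to \infty$ as $\lambda \downarrow 0$ whenever $f \neq 0$ on a set of positive $\nu$-measure. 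The same scaling estimate applied inside $H$ delivers \eqref{eq:10101802}, once one checks by dominated convergence that $\lambda \mapsto H(f,\lambda)$ is continuous, so that $H(f, \norm{f}_{\mathbb{M}}) = 1$ for $f \neq 0$. The quasi-triangle inequality uses the uniform constant $C_{*}$ in $\tau_s(x+y) \le C_{*}(\tau_s(x) + \tau_s(y))$, obtained by combining the standard argument for $D$-homogeneous functions with compactness of the parameter set $\mathcal{C}$ from the proof of \cref{23111710}; then $H(f+g,\lambda) \le C_{*}(H(f,\lambda) + H(g,\lambda))$ and substituting $\lambda = T(\norm{f}_{\mathbb{M}} + \norm{g}_{\mathbb{M}})$ with $T = (2 C_{*} C_4)^{1/a}$ together with the upper bound of \eqref{eq:10101802} yields \eqref{eq:12181010}.

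For part (c), the characteristic function of $I(f)$ is $\omega(u) = \exp(\int_S \psi_s(f(s)^{*} u)\,\nu(ds))$ by \cref{11101701}(b); since $\psi_s \le 0$, $|1 - \omega(u)| \le -\int \psi_s(f(s)^{*} u)\,\nu(ds) \le K_2 \int \tau_s(f(s)^{*} u)\,\nu(ds)$. Inserting this into the preceding lemma, applying Fubini, and using $\int_{\norm{u}_\infty \le \delta^{-1}} du = (2/\delta)^m$ yields $\Pro(\norm{I(f)} \ge \delta) \le L_1 H(f,\delta)$ with $L_1 = 2^m C K_2$. The moment bound follows from $\erwartung{\norm{I(f)}^p} = p \int_0^\infty \delta^{p-1} \Pro(\norm{I(f)} \ge \delta)\,d\delta$: split at $\delta = \norm{f}_{\mathbb{M}}$, bound the probability by $1$ below and by $L_1 C_4 (\norm{f}_{\mathbb{M}}/\delta)^a$ above via \eqref{eq:10101802}, with $p < a$ ensuring integrability of the upper tail.

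For part (d), completeness comes first. The crucial observation is that $H(\cdot,\lambda)$, and hence $\norm{\cdot}_{\mathbb{M}}$, is lower semicontinuous under $\nu$-a.e.\ pointwise convergence $f_n \to f$, since $\tau_s$ is continuous (so the inner sup passes to the limit pointwise in $s$) and Fatou applies. Given a Cauchy sequence $(f_n)$, extract a subsequence with $\norm{f_{n_{k+1}} - f_{n_k}}_{\mathbb{M}} \le 2^{-k}$, so part (b) yields $H(f_{n_{k+1}} - f_{n_k}, 1) \le C_4 \cdot 2^{-ka}$; a Borel--Cantelli argument combined with the coordinate-wise lower bound $\sup_u \tau_s(A^{*} u) \gtrsim \min\{\norm{A}^a, \norm{A}^b\}$ (from \cref{24111750} applied to $A^{*} e_i$) shows $(f_{n_k}(s))$ is Cauchy in $\Li{m}$ for $\nu$-a.e.\ $s$, producing a measurable pointwise limit $f$. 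The Cauchy property combined with lower semicontinuity then gives $\norm{f_n - f}_{\mathbb{M}} \to 0$ and $f \in \mathcal{I}(\mathbb{M})$. For \eqref{eq:10101801}, the direction from quasi-norm convergence to convergence in probability is immediate from (c) and \eqref{eq:10101802}. The converse is the main technical hurdle; I would handle it via the open mapping theorem for F-spaces: $(\mathcal{I}(\mathbb{M}), \norm{\cdot}_{\mathbb{M}})$ has just been shown to be complete, the space $\mathcal{I}(\mathbb{M})$ equipped with the metric of convergence in probability of integrals is an F-space by the results of \cite{paper}, and (c) says the identity between the two is continuous; open mapping then forces it to be a homeomorphism, yielding the remaining direction.
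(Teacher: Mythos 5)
Most of your argument (the $\tau_s$--$\psi_s$ comparison via \cref{23111710}, the scaling identity $H(\gamma f,\lambda)=H(f,|\gamma|^{-1}\lambda)$, the quasi-subadditivity of $\tau_s$, the tail-splitting for the $p$-th moment, and the Borel--Cantelli/Fatou completeness proof) matches the paper's proof. Two steps, however, have genuine gaps.

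In part (a) you claim that finiteness of $\int_S \tau_s(f(s)^{*}u)\,\nu(ds)$ \emph{at a single nonzero $u$} promotes to $H(f,\lambda)<\infty$ via the scalar scaling bound $\tau_s(cw)\le C_4\max\{c^a,c^b\}\tau_s(w)$. That scaling only compares $\tau_s$ along the ray through a fixed $u$; it says nothing about other directions, and indeed if $f(s)^{*}u_0\equiv 0$ the integral at $u_0$ vanishes while $f$ can be arbitrarily bad. What you actually have from \cref{11101701}(a)(iii) is finiteness for \emph{every} $u$, and you must then dominate the supremum inside $H(f,\lambda)$ — either by the coordinate decomposition $\sup_{\norm{u}_\infty\le 1}\tau_s(f(s)^{*}u)\le C\sum_j\tau_s(f(s)^{*}e_j)$ (using the quasi-subadditivity you only introduce later, in part (b)), or, as the paper does, by bounding $\tau_s(f(s)^{*}u)\le 2K_1^{-1}(1+\sqrt{m}\lambda^{-1})V_{\mathbb{M}}(f(s),s)$ uniformly over the ball and appealing to condition (ii) of \cref{11101701}. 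As written, the promotion step does not go through.

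The more serious issue is the converse implication in \eqref{eq:10101801}. Your open-mapping argument needs $(\mathcal{I}(\mathbb{M}),d_{\mathrm{prob}})$ to be an $F$-space, i.e.\ you need the range of $f\mapsto I(f)$ to be a \emph{closed} linear subspace of $L^0(\Omega;\R^m)$: if $(I(f_n))$ is Cauchy in probability there must exist $g\in\mathcal{I}(\mathbb{M})$ with $I(f_n)\to I(g)$. This is a nontrivial theorem (the multivariate analogue of the Urbanik--Wojbor/Rajput--Rosi\'nski closedness result), it is not stated in \cite{paper}, and the present paper explicitly warns that the Musielak--Orlicz machinery behind the scalar case breaks down for $m>1$. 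Without that completeness the open mapping theorem cannot be invoked, so this direction is not established. The paper avoids the issue entirely with an elementary argument: convergence in probability forces $\int_S|\psi_s((f_n(s)-f(s))^{*}e_j)|\,\nu(ds)\to 0$ for each coordinate vector $e_j$, and then \eqref{eq:23111750}, \eqref{eq:27111701} and \eqref{eq:16081801} give $H(f_n-f,1)\le C\,m\,\xi_n\to 0$, whence $\norm{f_n-f}_{\mathbb{M}}\to 0$ by \eqref{eq:10101802}. You should either supply a proof of the closedness of the range of $I$ in $L^0$ or replace the functional-analytic shortcut by this direct coordinate estimate.
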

\begin{proof}
Recall that $\psi_s$ is $B(s)$-homogeneous with $\psi_s(0)=0$. Therefore we can use \cref{23111710} for $\rho_1=\rho_2=1$ (with $K_1,K_2>0$ accordingly) to see that 
\begin{equation} \label{eq:23111750}
\forall s \in S \, \, \forall u \in \R^m: \qquad K_1 \tau_s(u) \le |\psi_s(u)| \le K_2 \tau_s(u).
\end{equation}
We start with the first identity of part(a). Thus if $H(f,\lambda)< \infty$ for every $\lambda>0$, we observe that (iii) of \cref{11101701} is fulfilled by continuity of $\psi_s$, using dominated convergence. Conversely, assume that  $f \in \mathcal{I}(\mathbb{M})$ and therefore that (ii) of \cref{11101701} holds. Fix $\lambda>0$ and recall \eqref{eq:24111701}. Then, similar to the proof of \cref{23111710} and using \eqref{eq:23111750}, we get the following estimates for all $s \in S$ and $u \in \R^m$ with $\norm{u}_{\infty} \le \lambda^{-1}$.
\begin{align*}
\tau_s(f(s)^{*}u) & \le K_1^{-1} |\psi_s(f(s)^{*}u)| \\
&= K_1^{-1} \integral{\R^m}{}{(1- \cos\skp{f(s)x}{u})}{\varphi(s,dx)} \\
& \le 2 \, K_1^{-1} (1+\sqrt{m} \lambda^{-1}) \, V_{\mathbb{M}}(f(s),s).
\end{align*}
Note that the last expression is integrable with respect to $\nu$ due to our present assumption. Hence we have that $H(f,\lambda)<\infty$ which gives the first identity stated in part (a). Now, without loss of generality, we may assume that $H(f,1)< \infty$ holds true in order to prove the second identity. Fix $\lambda>0$ again. Then, using \eqref{eq:24111732} and the $B(s)$-homogeneity of $\tau_s$, we derive for any $s \in S,u \in \R^m$ and $\gamma>0$ that
\begin{equation} \label{eq:27111701}
C_3 \min \{\gamma^{a}, \gamma^{b} \}  \tau_s(u) \le   \tau_s(\gamma u) = \tau_s(u) \tau_s(\gamma l_s(u))  \le  C_4 \max \{\gamma^{a}, \gamma^{b} \}  \tau_s(u).
 \end{equation}
However, this gives the assertion of part (a), since
\begin{equation*}
H(f,\lambda) = \integral{S}{}{\underset{\norm{u}_{\infty} \le 1 }{\sup} \tau_{s}(\lambda^{-1} f(s)^{*}u)}{\nu(ds)} \le C_4 \max \{\lambda^{-a}, \lambda^{-b} \}  \, H(f,1)< \infty.
\end{equation*}
Concerning $\norm{\cdot}_{\mathbb{M}}$ we first of all see that $\tau_s(0)=0$ implies $\norm{0}_{\mathbb{M}}=0$. Conversely, consider $ f \in \mathcal{I}(\mathbb{M})$ with $\norm{f}_{\mathbb{M}}=0$ and assume that $f \ne 0$ (on a set $A \in \Sigma$ with $\nu(A)>0$). Then it follows that $H(f,\lambda) \le 1$ for all $\lambda>0$ and, as $\lambda \rightarrow 0$, the monotone convergence theorem implies that
\begin{equation} \label{eq:24111755}
H(f):= \integral{S}{}{ \underset{u \in \R^m}{\sup} \, \tau_{s}(f(s)^{*}u)}{\nu(ds)} \le 1.
\end{equation}
On the other hand we have $\sup_{u \in \R^m} \norm{f(s)^{*}u}=\infty$ for every $s \in A$ and therefore $H(f)=\infty$ due to \eqref{eq:24111732}. This contradicts \eqref{eq:24111755} and we obtain that $f=0$. The homogeneity property $\norm{\gamma f}_{\mathbb{M}}=|\gamma| \, \norm{f}_{\mathbb{M}}$ follows from the definition of the infimum, since 
\begin{equation} \label{eq:12101820}
\forall \lambda>0 \, \, \forall \gamma \ne 0: \quad H(\gamma f, \lambda)=H(f, |\gamma|^{-1} \lambda).
\end{equation}
Moreover, using Proposition 1.3.4 in \cite{thebook} and \eqref{eq:17111720}, we see that 
\begin{equation} \label{eq:16081801}
\forall s \in S \, \forall u_1,u_2 \in \R^m: \quad |\psi_s(u_1+u_2)| \le 2 (|\psi_s(u_1)|+|\psi_s(u_2)|).
\end{equation}
In view of \eqref{eq:23111750} we conclude that there exists a constant $C_0>0$ such that 
\begin{equation} \label{eq:27111705}
\forall \lambda>0 \,  \forall f_1,f_2 \in \mathcal{I}(\mathbb{M}): \quad H(f_1+f_2,\lambda) \le C_0 (H(f_1,\lambda)+H(f_2,\lambda)).
\end{equation}
At the same time, for $T \ge 1$ chosen sufficiently large and fixed in sequel, we can use \eqref{eq:27111701} to verify that
\begin{equation*}
\forall \in S \, \forall u \in \R^m: \quad \tau_s(T^{-1}u) \le (2C_0)^{-1} \tau_s(u).
\end{equation*}
In view of \eqref{eq:27111705} this shows for any $\lambda_1,\lambda_2>0$ and $f_1,f_2 \in \mathcal{I}(\mathbb{M})$ that 
\allowdisplaybreaks
\begin{align*}
H(f_1+f_2, T(\lambda_1+\lambda_2)) & \le C_0(H(f_1, T(\lambda_1+\lambda_2))+H(f_2, T(\lambda_1+\lambda_2)))  \\
& \le C_0(H(f_1, T \lambda_1)+H(f_2, T\lambda_2))  \\ 
& = C_0(H(T^{-1} f_1,  \lambda_1)+H(T^{-1} f_2, \lambda_2))  \\
& \le 2^{-1} (H( f_1,  \lambda_1)+ H(f_2, \lambda_2)).
\end{align*}
Consider $\lambda_i=\norm{f_i}_{\mathbb{M}}+\eps$ ($i=1,2)$ to derive that
\begin{equation*}
H(f_1+f_2, T(\norm{f_1}_{\mathbb{M}} + \norm{f_2}_{\mathbb{M}} +2\eps)) \le 1
\end{equation*}
holds for any $\eps>0$. As $\eps \rightarrow 0$ this shows that $\norm{\cdot}_{\mathbb{M}}$ is a quasi-norm on $\mathcal{I}(\mathbb{M})$. Moreover, note that \eqref{eq:10101802} is true for $\norm{f}_{\mathbb{M}}=0$ (which is equivalent to $f=0$ $\nu$-almost everywhere). However, for $\norm{f}_{\mathbb{M}}>0$, \eqref{eq:12101820} implies that $H(f, \lambda)=H(\lambda^{-1} \norm{f}_{\mathbb{M}} f, \norm{f}_{\mathbb{M}} )$ and \eqref{eq:10101802} follows from
 \eqref{eq:27111701}, since $H(f,\norm{f}_{\mathbb{M}})=1$ by continuity of $\lambda \mapsto H(f,\lambda)$. \\
We now prove part (c). Recall that $|1-\exp(z)| \le |z|$ for any $z \in \C$ with $\text{Re }z \le 0$ and that the log-characteristic function of $I(f)$ is given by \eqref{eq:17111740} for $n=1$. Then \eqref{eq:12101701} (with the corresponding constant $C>0$), the triangular inequality and  \eqref{eq:23111750} imply that
\begin{align*}
\Pro(\norm{I(f)} \ge \delta)  & \le C \, \delta^m \integral{  \norm{u}_{\infty} \le \delta^{-1} }{}{\integral{S}{}{ |\psi_s(f(s)^{*}u) | }{\nu(ds)} }{du} \\
& \le C \, K_2 \, \delta^m \integral{  \norm{u}_{\infty} \le \delta^{-1} }{}{\integral{S}{}{ \underset{\norm{y}_{\infty} \le \delta^{-1}}{\sup} \tau_s(f(s)^{*}y) }{\nu(ds)} }{du}  \\
& =  2^m \, C \, K_2 \, H(f,\delta).
\end{align*}
Thus let $L_1=\max \{1,2^m C K_2\}$ to derive \eqref{eq:12031801}. For the second statement of part (c) fix $p>0$ as above. Then the following computation gives the assertion, where we use \eqref{eq:12031801} as well as \eqref{eq:10101802} and where we can assume that $\norm{f}_{\mathbb{M}} >0$.
\allowdisplaybreaks
\begin{align*}
\erwartung{\norm{I(f)}^p} &= p \integral{0}{\infty}{ \lambda^{p-1} \, \Pro(\norm{I(f)} \ge \lambda) }{d \lambda} \\
& \le p L_1 \integral{0}{\norm{f}_{\mathbb{M}} }{\lambda^{p-1}  }{d \lambda} + p L_1 \integral{\norm{f}_{\mathbb{M}} }{\infty}{\lambda^{p-1} H(f,\lambda) }{d \lambda} \\
& \le L_1 \norm{f}_{\mathbb{M}} ^{p}  + p L_1 C_4  \norm{f}_{\mathbb{M}}^a \integral{\norm{f}_{\mathbb{M}} }{\infty}{\lambda^{p-1-a}}{d \lambda} \\
& = L_1 \norm{f}_{\mathbb{M}} ^{p}  + p L_1 C_4 (a-p)^{-1} \norm{f}_{\mathbb{M}} ^{p}   \\
&=: L_2 \norm{f}_{\mathbb{M}}^p.
\end{align*}

In order to show that $\mathcal{I}(\mathbb{M})$ is complete, let $(f_n) \subset \mathcal{I}(\mathbb{M})$ be a Cauchy-sequence with respect to $\norm{\cdot}_{\mathbb{M}}$. Then we have to find some $f \in \mathcal{I}(\mathbb{M})$ such that $\norm{f_n-f}_{\mathbb{M}} \rightarrow 0$ as $n \rightarrow \infty$ or at least along a suitable subsequence, since $(f_n)$ is Cauchy and since $\norm{\cdot}_{\mathbb{M}}$ fulfills \eqref{eq:12181010}. Hence, without loss of generality and throughout following the idea of the proof of Theorem 5.2.1 in \cite{Dud02}, it can be assumed that 
\begin{equation} \label{eq:10101803}
\norm{f_m-f_n}_{\mathbb{M}} \le 2^{-N} \quad \text{for all $n,m \ge N$}.
\end{equation}
Define 
\begin{equation*}
A_n:= \{s \in S:   \underset{\norm{u}_{\infty} \le 1}{\sup} \, \tau_{s}((f_{n+1}(s)-f_{n}(s))^{*}u)) > n^{-2b}  \}, \quad n \in \N.
\end{equation*}
Then, using \eqref{eq:10101802} and \eqref{eq:10101803}, we see for any $n \in \N$ that 
\begin{equation*}
n^{-2b} \nu(A_n) \le H(f_{n+1}-f_{n},1) \le C_4 2^{-an}
\end{equation*}
which implies that $\summezwei{n=1}{\infty} \nu(A_n)<\infty$ and therefore that $B:= \limsup_{n \rightarrow \infty} A_n$ is a $\nu$-null set.  Let $f(s):=0$ for every $s \in B$. However, for $s \in B^c=S \setminus B$, we observe that there exists some $N(s) \in \N$ such that $s \notin A_n$ for all $n \ge N(s)$. In view of \eqref{eq:24111732} and $\norm{\cdot}_{\infty} \le \norm{\cdot}$ it follows for all $s \in B^c, n \ge N(s)$ and $\norm{u} \le 1$ that $\norm{(f_{n+1}(s)-f_n(s))^{*}u}$ is bounded. Using the same argument we obtain a constant $C_1>0$ such that the following inequality holds for all $s \in B^c$ and $n \ge N(s)$:
\begin{equation} \label{eq:12101801}
 \underset{\norm{u}\le 1}{\sup} \norm{(f_{n+1}(s)-f_n(s))^{*}u}^b \le C_1  \underset{\norm{u}_{\infty} \le 1}{\sup} \, \tau_s((f_{n+1}(s)-f_n(s))^{*}u).
\end{equation}
Actually, this implies for all $s \in B^c$ and $n,l \ge N(s)$ that
\allowdisplaybreaks
\begin{align*}
\norm{f_n(s)-f_l(s)} & \le \summe{j=N(s)}{\infty}  \norm{(f_{j+1}(s)-f_{j}(s))^{*}} \\
&= \summe{j=N(s)}{\infty}  \left (\underset{\norm{u}\le 1}{\sup} \norm{(f_{j+1}(s)-f_{j}(s))^{*}u}^b \right)^{1/b} \\
& \le C_{1}^{1/b} \summe{j=N(s)}{\infty} \left( \underset{\norm{u}_{\infty} \le 1}{\sup}  \tau_s((f_{n+1}(s)-f_n(s))^{*}u) \right)^{1/b} \\
& \le C_{1}^{1/b} \summe{j=N(s)}{\infty} j^{-2} 
\end{align*}
due to \eqref{eq:12101801} and since $s \notin A_j$ for all $j \ge N(s)$. In particular we get that $\norm{f_n(s)-f_l(s)}  \rightarrow 0$ as $N(s)$ tends to $\infty$ which shows that $(f_n(s))$ is Cauchy with respect to the operator norm $\norm{\cdot}$. Denote the corresponding limit by $f(s)$ and, overall, observe that $f:S \rightarrow \Li{m}$ is measurable with $f_n(s) \rightarrow f(s)$ $\nu$-almost everywhere. Moreover, \eqref{eq:12181010} and \eqref{eq:10101803} imply for any $n \in \N$ that $\norm{f_n}_{\mathbb{M}} \le T(\norm{f_1}_{\mathbb{M}}+1/2).$ Hence by continuity of $\tau_s$, a routine estimate, Fatou's Lemma and \eqref{eq:10101802} we obtain that
\allowdisplaybreaks 
\begin{align*}
H(f,1) &= \integral{S}{}{\underset{\norm{u}_{\infty} \le 1}{\sup} \, \limesinf{n}{\infty} \tau_s(f(s)^{*}u)}{\nu(ds)} \\
&\le \integral{S}{}{\limesinf{n}{\infty}  \underset{\norm{u}_{\infty} \le 1}{\sup} \tau_s(f(s)^{*}u)}{\nu(ds)} \\ 
&\le  \limesinf{n}{\infty} H(f,1) \\
&\le C_4  \limesinf{n}{\infty} (1+ \norm{f_n}_{\mathbb{M}} )^b \\
&< \infty,
\end{align*}
i.e. $f \in \mathcal{I}(\mathbb{M})$ due to part (a). Quite similar, using \eqref{eq:10101803}, we derive for any $n \in \N$ that 
\begin{equation*}
H(f_n-f,1)  \le C_4 \limesinf{m}{\infty}  \max \left \{ \norm{f_n-f_m}_{\mathbb{M}}^a , \norm{f_n-f_m}_{\mathbb{M}}^b \right \}  \le C_4 \,  2^{-an}.
\end{equation*}
In view of \eqref{eq:10101802} this shows that $\norm{f_n-f}_{\mathbb{M}} \rightarrow 0$ as $n \rightarrow \infty$. \\
For the additional statement of part (d) consider $f,f_1,... \in \mathcal{I}(\mathbb{M})$ (diffeferent from before). By linearity, L\'{e}vy's continuity theorem and after a change of variables we observe that $I(f_n) \rightarrow I(f)$ in probability if and only if
\begin{equation} \label{eq:12101804}
\forall \theta \in \R^m: \quad \left | \integral{S}{}{ \psi_{s} \left( ( f_n(s)-f(s))^{*} \theta \right)   }{\nu(ds)} \right| = \integral{S}{}{ |\psi_{s} \left( ( f_n(s)-f(s))^{*} \theta \right) |  }{\nu(ds)} \rightarrow 0
\end{equation}
holds true as $n \rightarrow \infty$. First assume that $\norm{f_n-f}_{\mathbb{M}} \rightarrow 0$ and fix $\theta \in \Gamma_m$, where we let $\lambda=\norm{\theta}_{\infty}^{-1}$. Then \eqref{eq:23111750} yields 
\begin{equation*}
\integral{S}{}{ | \psi_{s} \left( ( f_n(s)-f(s))^{*} \theta \right) |  }{\nu(ds)}  \le K_2  \integral{S}{}{ \underset{\norm{u}_{\infty} \le \lambda^{-1}}{\sup} \tau_s \left( ( f_n(s)-f(s))^{*} u \right)   }{\nu(ds)}  = K_2 H(f_n-f,\lambda)
\end{equation*}
and \eqref{eq:12101804} follows from \eqref{eq:10101802}, since $\theta \in \Gamma_m$ was arbitrary (the case $\theta=0$ is obvious). Conversely, \eqref{eq:12101804} implies that a null sequence $(\xi_n)$ is defined in virtue of
\begin{equation*}
\xi_n:= \max \left \{  \integral{S}{}{ |\psi_{s} \left( ( f_n(s)-f(s))^{*} e_j \right) |  }{\nu(ds)} : j=1,...,m \right \},
\end{equation*}
where $e_j$ is the $j$-th unit vector in $\R^m$. For $u \in \R^m$ arbitrary, write $u=(u_1,...,u_m)$. Then, using \eqref{eq:23111750}, \eqref{eq:27111701} and \eqref{eq:16081801}, we obtain constants $C_{5},C_{6}>0$ such that
\begin{align*}
H(f_n-f,1) & \le C_{5} \summe{j=1}{m}   \integral{S}{}{ \underset{\norm{u}_{\infty} \le 1}{\sup} \, \tau_s \left( ( f_n(s)-f(s))^{*} u_j e_j \right)   }{\nu(ds)} \\
& \le C_{6} \summe{j=1}{m}   \integral{S}{}{ \underset{\norm{u}_{\infty} \le 1}{\sup} \, \left| \psi_s \left( ( f_n(s)-f(s))^{*}  e_j \right) \right|   }{\nu(ds)} \\
& \le  C_{6}  \,m\, \xi_n
\end{align*}
holds true. According to \eqref{eq:10101802} again this shows that $\norm{f_n-f}_{\mathbb{M}} \rightarrow 0$.
\end{proof}
\begin{example} \label{13031801}
Assume that $\mu_s$ is $\alpha(s)$-stable with rotation-invariant spectral measure $\sigma$ for every $s \in S$, i.e. $\psi_s(u)=- c \norm{u}^{\alpha(s)}$ for some $c>0$. Then we get back the situation in \cite{falconer3} (at least for $c=m=1$) and $\mathcal{I}(\mathbb{M})$ equals the set
\begin{equation} \label{eq:16071801}
\mathcal{F}_{\alpha(\cdot)} =\mathcal{F}_{\alpha(\cdot)} (\nu, m):=\{f:S \rightarrow \Li{m} \text{ measurable }| \int_S \norm{f(s)}^{\alpha(s)} \, \nu(ds) < \infty \}
\end{equation}
due to \cref{16111705}. Moreover, we verify that $\tau_s=-c^{-1} \psi_s$. Hence $\tau_s$ is convex for $\alpha(s) \ge 1$ and subadditive for $\alpha(s) \le 1$. In both cases this implies the following:
\begin{equation*}
\forall u_1,u_2 \in \R^m \, \, \forall \eta_1,\eta_2 \ge 0 \text{ with } \eta_1+\eta_2=1: \quad \tau_s(\eta_1 u_1 + \eta_2 u_2) \le \tau_s(u_1) + \tau_s(u_2).
\end{equation*}
Using the ideas of \cite{musielak} we conclude that the mapping $\norm{f}_{\mathbb{M}}^{*}:= \inf \{\lambda>0: H(f,\lambda) \le \lambda \}$ provides the triangular inequality. However, we lose the homogeneity property in this way. 
\end{example}
\begin{remark}
In particular, if we assume that $\mu=\mu_s$ is $\alpha$-stable for fixed $0< \alpha <2$, our construction of random measures covers the considerations in \cite{bms}, \cite{lixiao} and \cite{SaTaq94}. Moreover, if  $\mu=\mu_s$ is still constant but operator-stable (say with exponent $B$), we also get back the setting of \cite{paper2}. Merely note that, in contrast to \cite{paper2}, $\mu$ has to be symmetric (instead of strictly) operator-stable in our framework. However, $\tau=\tau_s$ does not depend on $s \in S$ either and \eqref{eq:16111710} becomes
\begin{equation*}
H(f,\lambda):= \integral{S}{}{ \underset{\norm{u}_{\infty} \le \lambda^{-1}}{\sup} \tau(f(s)^{*}u)}{\nu(ds)} 
\end{equation*}
in this case. Especially, the results of \cref{16111705} remain true accordingly.
\end{remark}
But in general and in contrast to \cref{13031801} the function $\tau_s$ can not be computed explicitly. However, we are interested in possibly large subsets of $\mathcal{I}(\mathbb{M})$ that are at least similar to \eqref{eq:16071801}. For this purpose let 
\begin{equation*}
\mathcal{F}_{p} =\mathcal{F}_{p} (\nu, m):= \left \{f:S \rightarrow \Li{m} \text{ measurable }| \int_S \norm{f(s)}^{p} \, \nu(ds) < \infty \right \}, \quad p \ge 0.
\end{equation*} 
Moreover, we define $\mathcal{F}_{p,q}:= \mathcal{F}_{p} \cap \mathcal{F}_{q}$ which means for measurable $f:S \rightarrow \Li{m}$ that $f \in \mathcal{F}_{p,q}$ if and only if $\int_S \norm{f(s)}^{p,q} \, \nu(ds)< \infty$. 
\begin{cor} \label{16111703}
Let $\mathbb{M}$ be an ISRM as in \cref{04101702}. Then we have:
\begin{itemize}
\item[(a)] $\mathcal{F}_{a,b} \subset \mathcal{I}(\mathbb{M})$.
\item[(b)] Let $O(s) \in$ GL$(\R^m)$ be an orthogonal matrix such that 
\begin{equation*}
D(s)=O(s)B(s)O(s)^{*}=\text{diag}(d_1^{-1}(s),...,d_m^{-1}(s)),
\end{equation*}
i.e. $a \le d_i(s)\le b$ for any $i=1,...,m$ and $s \in S$. Define $g(s):=f(s) O(s)^{*}$, where $g(s)=(g_{i,j}(s))_{i,j=1,...,m}$ with column vectors $g^{(j)}(s) =( g_{1,j}(s),...,g_{m,j}(s))^t $. Then the following three statements are equivalent. 
\begin{itemize}
\item[(i)] $f \in \mathcal{I}(\mathbb{M})$.
\item[(ii)] $g_{i,j} \in \mathcal{F}_{d_j(\cdot)}$ for $i,j=1,...,m$.
\item[(iii)] $\int_S \norm{g^{(j)}(s)}^{d_j(s)} \, \nu(ds) < \infty$ for $j=1,...,m$.
\end{itemize}
\item[(c)] If $f_{i,j} \in \mathcal{I}(\mathbb{M})$ for $i,j=1,...,m$, then $f \in \mathcal{I}(\mathbb{M})$.
\end{itemize}
\end{cor}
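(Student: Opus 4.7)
The overall strategy is to reduce everything to $B(s)$ being diagonal via an orthogonal change of coordinates, compute $\tau_{D(s)}$ coordinate-wise, and then read off (a)--(c) from \cref{16111705}(a).

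For (a), I would apply \eqref{eq:24111732} together with $\|f(s)^{*}u\| \le \|f(s)\|\,\|u\| \le \sqrt{m}\,\|f(s)\|\,\|u\|_{\infty}$ to get $\sup_{\|u\|_{\infty} \le 1}\tau_s(f(s)^{*}u) \le C\|f(s)\|^{a,b}$, and then integrate against $\nu$: this yields $H(f,1) < \infty$ whenever $f \in \mathcal{F}_{a,b}$, whence $f \in \mathcal{I}(\mathbb{M})$ by \cref{16111705}(a).

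For (b) I will first exploit the relation $r^{B(s)} = O(s)^{*} r^{D(s)} O(s)$ together with the convention $S_{B(s)} = S^{m-1}$ granted by \cref{17111701} to derive the transformation law $\tau_{B(s)}(\xi) = \tau_{D(s)}(O(s)\xi)$. Since $g(s)^{*} = O(s) f(s)^{*}$, this rewrites $H(f,\lambda) = \int_S \sup_{\|u\|_{\infty} \le \lambda^{-1}} \tau_{D(s)}(g(s)^{*}u)\,\nu(ds)$. For diagonal $D(s)$, plugging $r = \tau_{D(s)}(\xi)$ into $\sum_j r^{-2/d_j(s)} \xi_j^2 = 1$ will yield the uniform two-sided comparison
\[
\max_j |\xi_j|^{d_j(s)} \;\le\; \tau_{D(s)}(\xi) \;\le\; m^{b/2} \sum_{j=1}^m |\xi_j|^{d_j(s)},
\]
with constants independent of $s$ thanks to $a \le d_j(s) \le b$. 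Combined with $\sup_{\|u\|_{\infty} \le \lambda^{-1}}|\langle g^{(j)}(s), u\rangle| = \lambda^{-1}\sum_i|g_{i,j}(s)|$ and the trivial equivalence of $\sum_i|g_{i,j}|$ with $\|g^{(j)}\|$, this will identify $H(f,\lambda)$ up to multiplicative constants with $\sum_{j=1}^m \int_S \lambda^{-d_j(s)}\|g^{(j)}(s)\|^{d_j(s)}\,\nu(ds)$, giving (i) $\Leftrightarrow$ (iii). The equivalence (ii) $\Leftrightarrow$ (iii) is then immediate from the componentwise bounds $|g_{i,j}|\le\|g^{(j)}\|\le\sqrt{m}\,\max_i|g_{i,j}|$.

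Part (c) will be a double application of (b). Assuming $f_{i,j}\mathbb{E}_m \in \mathcal{I}(\mathbb{M})$ for all $i,j$, since the $j'$-th column of $f_{i,j}(s)O(s)^{*}$ is $f_{i,j}(s)$ times a unit vector, (b)(iii) gives $\int_S |f_{i,j}(s)|^{d_{j'}(s)}\,\nu(ds) < \infty$ for every $i,j,j'$. For the full matrix $f=(f_{i,k})$, the estimate $\|g^{(j)}(s)\| \le \|f(s)\| \le m\,\max_{i,k}|f_{i,k}(s)|$ yields $\|g^{(j)}\|^{d_j(s)} \le m^b \sum_{i,k}|f_{i,k}(s)|^{d_j(s)}$, and integrating plus applying (b)(iii) once more gives $f \in \mathcal{I}(\mathbb{M})$. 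The main obstacle I anticipate is the uniform two-sided comparison of $\tau_{D(s)}$ with $\sum_j|\xi_j|^{d_j(s)}$ in the diagonal case; once this is in place with constants independent of $s$, the rest is linear algebra and bookkeeping.
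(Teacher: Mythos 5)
Your proposal is correct and follows the same overall architecture as the paper's proof: reduce to the diagonal case via $\tau_{B(s)}(\xi)=\tau_{D(s)}(O(s)\xi)$, establish a two-sided sandwich of $\sup_{\norm{u}_{\infty}\le 1}\tau_s(f(s)^{*}u)$ between $\max_{i,j}|g_{i,j}(s)|^{d_j(s)}$ and a constant times $\sum_{i,j}|g_{i,j}(s)|^{d_j(s)}$, and then invoke \cref{16111705}(a); parts (a) and (c) are essentially identical to the paper's. The one genuine difference lies in how you prove the central sandwich in the diagonal case. The paper obtains the upper bound from the quasi-subadditivity of $\tau_s$, inherited from $\psi_s$ via \eqref{eq:23111750} and \eqref{eq:16081801} together with $\tau_s(\gamma e_j)=|\gamma|^{d_j(s)}$, and the lower bound from the projection estimate $\tau_s(x_j e_j)=\tau_s(x)\,\tau_s(l_{s,j}(x)e_j)\le\tau_s(x)$. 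You instead read both bounds directly off the defining equation $\sum_j r^{-2/d_j(s)}\xi_j^2=1$ of the generalized polar coordinates: each summand being at most $1$ gives $\max_j|\xi_j|^{d_j(s)}\le\tau_{D(s)}(\xi)$, and some summand being at least $1/m$ gives $\tau_{D(s)}(\xi)\le m^{d_j(s)/2}|\xi_j|^{d_j(s)}\le m^{b/2}\sum_k|\xi_k|^{d_k(s)}$, with constants independent of $s$ because $a\le d_j(s)\le b$. This is a more elementary and self-contained derivation (it does not pass through $\psi_s$ at all), at the cost of being specific to the Euclidean-sphere convention of \cref{17111701}; the paper's route is the one that generalizes when $\tau_s$ is only known through \eqref{eq:23111750}. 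Both yield the required estimate \eqref{eq:21091803}, and the remaining bookkeeping with $\sup_{\norm{u}_{\infty}\le\lambda^{-1}}|\skp{g^{(j)}(s)}{u}|=\lambda^{-1}\sum_i|g_{i,j}(s)|$ and the equivalence of $\sum_i|g_{i,j}|$ with $\norm{g^{(j)}}$ is exactly as in the paper.
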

\begin{proof}
Since $\norm{f(s)^{*}u} \le \norm{f(s)} \norm{u}$, part (a) follows immediately from \cref{16111705} and \eqref{eq:24111732}. Concerning part (b) note that (ii) $\Leftrightarrow$ (iii) is obvious because of $a \le d_i(s)\le b$ for any $1 \le i \le m$ and $s \in S$. In what follows we will prove that there exists some $C>0$ such that
\begin{equation} \label{eq:21091803}
 |g_{i_0,j_0}(s)|^{d_j(s)} \le \underset{\norm{u}_{\infty} \le 1}{\sup} \tau_s(f(s)^{*}u)  \le C  \summezwei{j=1}{m} \summezwei{i=1}{m} |g_{i,j}(s)|^{d_j(s)}
\end{equation}
holds true for every $1 \le i_0,j_0 \le m$ and $s \in S$. Then (i) $\Leftrightarrow$ (ii) would follow due to the fact that $f \in \mathcal{I}(\mathbb{M})$ if and only if $H(f, 1)< \infty$ (see \cref{16111705}).   \\ 
In order to prove \eqref{eq:21091803} we will first assume that $B(s)$ is already diagonal for any $s \in S$, that means $D(s)=B(s),O(s)=\mathbb{E}_m$ and $g(s)=f(s)$. In this case we see that 
\begin{equation} \label{eq:20091801}
\tau_s(\gamma \, e_j)=|\gamma|^{d_j(s)} \quad \text{for any $\gamma \in \R$ and $j=1,...,m$}.
\end{equation}
Hence, in view of \eqref{eq:23111750} and \eqref{eq:16081801}, we obtain a constant $C>0$ such that the following estimate holds for any $s \in S$, where $u=(u_1,...,u_m)$.
\begin{align}
 \underset{\norm{u}_{\infty} \le 1}{\sup} \tau_s(f(s)^{*}u)  &=  \underset{\norm{u}_{\infty} \le 1}{\sup} \tau_s \left( \summezwei{j=1}{m} \summezwei{i=1}{m} f_{i,j}(s)u_i e_j \right)  \notag \\
& \le   C   \summezwei{j=1}{m} \summezwei{i=1}{m} \underset{\norm{u}_{\infty} \le 1}{\sup}  |f_{i,j}(s)u_i|^{d_j(s)} \notag \\
& \le C    \summezwei{j=1}{m} \summezwei{i=1}{m} |f_{i,j}(s)|^{d_j(s)}.  \label{eq:21091806}
\end{align}
Conversely, consider $x=(x_1,...,x_m) \in \Gamma_m$ arbitrary. Write $l_s(x)=(l_{s,1}(x),...,l_{s,m}(x))$ and observe that $x_j e_j= l_{s,j}(x) \tau_s(x)^{B(s)}  e_j$, since $B(s)$ is diagonal. If we combine \eqref{eq:20091801} with $|l_{s,j}(x)| \le 1$, it follows (similar to Lemma 2.1 in \cite{exact}) for any $1 \le j \le m$ and $s \in S$ that 
\begin{equation} \label{eq:21091801}
\tau_s(x_j e_j) = \tau_s(x) \tau_s(l_{s,j}(x) e_j) \le \tau_s(x),
\end{equation}
which remains true for $x=0$. Let $i_0,j_0 \in \{1,...,m\}$ as well as $s \in S$ be arbitrary. Then, using \eqref{eq:20091801} for $\gamma=f_{ij}(s)$ and \eqref{eq:21091801} for $x=(f_{i,1}(s),...,f_{m,i}(s)) $, we obtain that
\begin{equation} \label{eq:21091810}
|f_{i,j}(s)|^{d_j(s)} = \tau_s(f_{i,j}(s) e_j) \le  \tau_s (x) =\tau_s (f(s)^{*} e_i) \le \underset{\norm{u}_{\infty} \le 1}{\sup} \tau_{s}(f(s)^{*}u).
\end{equation}
This completes the proof of part (b) for the diagonal case, since \eqref{eq:21091803} follows from \eqref{eq:21091806} and \eqref{eq:21091810} with $g(s)=f(s)$. In the general case, where $D(s)=O(s)B(s)O(s)^{*}$, we can write
\begin{equation*}
O(s)x=O(s) \tau_s(x)^{B(s)} O(s)^{*} O(s) l_s(x)=\tau_s(x)^{D(s)} O(s) l_s(x)
\end{equation*}
for any $s \in S$ and $x \in \Gamma_m$ (see Proposition 2.2.2 in \cite{thebook}). In view of $O(s) l_s(x)  \in S^{m-1}$ it follows that $\tau_s(x)=\tau_{D(s)}(O(s)x)$ by uniqueness (also recall \cref{17111701}), where the case $x=0$ is included again. Hence we derive that
\begin{equation*}
\forall s \in S : \quad \underset{\norm{u}_{\infty} \le 1}{\sup} \tau_{s}(f(s)^{*}u)= \underset{\norm{u}_{\infty} \le 1}{\sup} \tau_{D(s)}(g(s)^{*}u)
\end{equation*}
which allows us to argue as before to establish \eqref{eq:21091803} for the function $g(s)=f(s)O(s)^{*}$. \\
We now prove part (c) and assume that $f_{i,j} \mathbb{E}_m \in \mathcal{I}(\mathbb{M})$ for $i,j=1,...,m$. Using the implication (i) $\Rightarrow$ (iii) of part (b) accordingly this shows that $f_{i,j} \in \mathcal{F}_{d_k(\cdot)}$ for $i,j = 1,...,m$ and $k=1,...,m$, since the columns of $O(s)^{*}$ belong to $S^{m-1}$. Particularly, the components of $O(s)^{*}$ are bounded by one for every $s \in S$. Using this we can verify that condition (ii) of part (b) holds true with respect to the function $f$ itself, i.e. $f \in \mathcal{I}(\mathbb{M})$. 
\end{proof}
\begin{example}  \label{09031801}
Consider $m=2$ and let $|\cdot|$ be the Lebesgue measure on $S=\R$. Moreover, let $B(s)=\text{diag}(1+\exp(-|s|),2)$. Then the function 
\begin{equation*}
f(s)=\text{diag}(|s|^{-3/2(1+\exp(-|s|) },|s|^{-3}) \indikatorzwei{|s|>1}, \quad s \in \R
\end{equation*}
fulfills condition (ii) of \cref{16111703} which leads to $f \in \mathcal{I}(\mathbb{M})$. Now let $g(s)=f_{1,1}(s) \mathbb{E}_2$ and observe that 
\begin{equation*}
-\frac{3}{4}(1+\exp(-|s|)) \ge - \frac{15}{16} \quad  \text{for any $|s| \ge  \ln 4$}.
\end{equation*}
This implies that 
\begin{equation*}
\integral{\R}{}{|g_{2,2}(s)|^{1/2}}{ds}=\integral{\R}{}{|f_{1,1}(s)|^{1/2}}{ds}=\infty,
\end{equation*}
so $f_{1,1}(s)$ does not belong to $ \mathcal{I}(\mathbb{M})$. Particularly, the converse of part (c) is false. In a similar way we see that $ \mathcal{F}_{a,b} $ is a true subset of $ \mathcal{I}(\mathbb{M})$ in general.
\end{example}
To finish this section we want to find a suitable generalization of Theorem 2.7 in \cite{falconer3}. For this purpose we consider $(S,\Sigma)=(\R^d, \B(\R^d))$ with $|\cdot|$ being the d-dimensional Lebesgue measure, where $d \ge 1$ is arbitrary. Also define the \textit{scaling map} $T_{u,r}:\R^d \rightarrow \R^d$ via $T_{u,r}(s)=(s-u)/ r$ for any $u \in \R^d$ and $r>0$. Then \cref{16111705} implies that the following integral exists at least for every $f \in \mathcal{F}_{a,b}$.
\begin{equation} \label{eq:08011802}
\integral{\R^d}{}{f(s)}{(T_{u,r} \mathbb{M})(ds)}:=\integral{\R^d}{}{f  (T_{u,r}(s))}{ \mathbb{M}(ds)}.
\end{equation}
Hence $(T_{u,r} \mathbb{M})$ is some kind of an image measure. We thereby justify the notion \textit{multi operator-stable} ISRM which means:  Locally, say \textit{around} $u$, the ISRM $\mathbb{M}$ somehow behaves like an operator-stable one with exponent $B(u)$, denoted by $\mathbb{M}_{u}$ in sequel (see \cite{paper2}).
\begin{theorem} \label{09011805}
Let $\mathbb{M}$ be a multi operator-stable ISRM on $\mathcal{S}=\{A \in \B(\R^d): |A|< \infty\}$. Fix $u \in \R^d$ and assume that the mapping $\R^d \ni s \mapsto B(s)$ is continuous at $u$ such that 
\begin{equation}
\limes{r}{0} r^{dB(u+rs)} r^{-dB(u)} =\mathbb{E}_m  \label{eq:08011801}
\end{equation} 
holds uniformly in $s$ on compact subsets of $\R^d$. Moreover, consider $f_1,...,f_n  \in \mathcal{F}_{a,b} $ with compact support such that, for any $j=1,...,n$ and $s_1,s_2 \in \R^d$, we have that
\begin{equation} \label{eq:08011803}
f_j(s_1) B(s_2)=B(s_2) f_j (s_1). 
\end{equation}
Then, as $r \rightarrow 0$, the following convergence in distribution holds true:
\begin{equation*}
\left( r^{-d B(u)} \integral{\R^d}{}{f_j(s)}{ (T_{u,r} \mathbb{M})(ds)}  : j=1,...,n \right) \Rightarrow \left( \, \, \integral{\R^d}{}{f_j(s)}{ \mathbb{M}_{u}(ds) }  : j=1,...,n \right),
\end{equation*}
where $\mathbb{M}_u \sim (B(u),\sigma, \nu)$ in the sense of \cref{04101702}. Especially, we have
\begin{equation*}
(r^{-dB(u)}\mathbb{M}(u+r A_j),...,r^{-d B(u)}\mathbb{M}(u+rA_n)) \Rightarrow (\mathbb{M}_u(A_1),...,\mathbb{M}_u(A_n))
\end{equation*}
for any bounded sets $A_1,...,A_n \in \B(\R^d)$.
\end{theorem}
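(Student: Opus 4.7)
The plan is to apply L\'evy's continuity theorem by showing that the log-characteristic function of the scaled vector on the left-hand side converges pointwise to that of the right-hand side. By \cref{11101701}(b) and the fact that left-multiplication by a constant matrix commutes with the stochastic integral, i.e.\ $I(r^{-dB(u)} g) = r^{-dB(u)} I(g)$, the log-characteristic function of $\bigl(r^{-dB(u)} \int f_j(s)\,d(T_{u,r}\mathbb{M})\bigr)_{j=1}^{n}$ evaluated at $(\theta_1,\ldots,\theta_n) \in \R^{n\cdot m}$ equals, after the change of variables $s' = (s-u)/r$,
\begin{equation*}
\int_{\R^d} r^d\, \psi_{u+rs'}\!\left( \sum_{j=1}^{n} f_j(s')^{*}\, r^{-dB(u)} \theta_j \right) ds'.
\end{equation*}
Taking adjoints in \eqref{eq:08011803} with $s_{2}=u$ and using that $B(u)$ is symmetric shows that $f_j(s')^{*}$ commutes with $B(u)$, hence with $r^{-dB(u)}$; pulling this scaling out of the sum and setting $\Sigma(s') := \sum_{j=1}^{n} f_j(s')^{*} \theta_j$, the integrand rewrites as $r^{d}\psi_{u+rs'}(r^{-dB(u)} \Sigma(s'))$.

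Second, I would exploit the $B(s)$-homogeneity $\psi_{s}(t^{B(s)} y) = t\, \psi_{s}(y)$ (valid for $t>0$ because $B(s)$ is symmetric) applied at $s=u+rs'$ with $t=r^{-d}$, together with the factorization $r^{-dB(u)} = r^{-dB(u+rs')}\cdot r^{dB(u+rs')} r^{-dB(u)}$, to rewrite
\begin{equation*}
r^{d}\psi_{u+rs'}(r^{-dB(u)} \Sigma(s')) = \psi_{u+rs'}\!\left( r^{dB(u+rs')}\, r^{-dB(u)}\, \Sigma(s') \right).
\end{equation*}
By \eqref{eq:08011801} the matrix $r^{dB(u+rs')}r^{-dB(u)}$ tends to $\mathbb{E}_m$ pointwise in $s'$, and continuity of $B(\cdot)$ at $u$ forces $B(u+rs')\to B(u)$. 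A dominated convergence argument analogous to the one in the proof of \cref{23111710} then yields the joint continuity required to conclude that the integrand converges pointwise to $\psi_{u}(\Sigma(s'))$ as $r \to 0$.

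To legitimize passage to the limit under the integral, I would invoke \eqref{eq:23111750} and \eqref{eq:24111732} to get $|\psi_{s}(y)| \le K_{2}\tau_{s}(y) \le K_{2}C_{4}\norm{y}^{a,b}$ uniformly in $s \in S$. Since $f_1,\dots,f_n$ have compact support, $\Sigma(s')$ vanishes off a fixed compact $K \subset \R^d$, and on $K$ the \emph{uniform} convergence in \eqref{eq:08011801} provides, for all sufficiently small $r$, a constant $C>0$ with $\norm{r^{dB(u+rs')}r^{-dB(u)}} \le C$. Consequently the integrand is majorised by $K_{2}C_{4}(C\norm{\Sigma(s')})^{a,b}$, which lies in $L^{1}(\R^{d})$ because each $f_{j} \in \mathcal{F}_{a,b}$. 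Dominated convergence yields the limit $\int_{\R^d}\psi_{u}(\Sigma(s'))\,ds'$, which by \cref{11101701}(b) is exactly the log-characteristic function of $(\int f_j\,d\mathbb{M}_{u})_{j=1}^{n}$, and L\'evy's theorem closes the argument. The second assertion follows by choosing $f_j = \mathbb{E}_m\indikatorzwei{A_j}$, which trivially satisfies \eqref{eq:08011803}, has compact support, and lies in $\mathcal{F}_{a,b}$, together with the elementary identity $\int \indikator{A_j}{T_{u,r}(s)}\,\mathbb{M}(ds) = \mathbb{M}(u+rA_j)$. The main technical hurdle is precisely this uniform domination step, where the compact support of the $f_j$ and the uniformity in \eqref{eq:08011801} combine to control the scaling factor $r^{dB(u+rs')}r^{-dB(u)}$ independently of $s'$; everything else is careful bookkeeping of characteristic functions, homogeneity, and the commutation hypothesis.
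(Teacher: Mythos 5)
Your proposal is correct and follows essentially the same route as the paper's proof: reduce to convergence of log-characteristic functions via L\'evy's continuity theorem, change variables, use the commutation hypothesis \eqref{eq:08011803} together with the $B(u+rs)$-homogeneity of $\psi_{u+rs}$ to absorb $r^d$ into the argument as $r^{dB(u+rs)}r^{-dB(u)}$, obtain pointwise convergence of the integrand from \eqref{eq:08011801} and the weak convergence $\mu_{u+rs}\to\mu_u$, and dominate via \eqref{eq:23111750}, \eqref{eq:24111732}, the compact supports and the bound on $\norm{r^{dB(u+rs)}r^{-dB(u)}}$. No gaps of substance.
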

\begin{proof}
Without loss of generality we may assume that $f_1,...,f_n$ have a support contained in $[-K,K]^d$ for some $K>0$. Fix $\theta_1,...,\theta_n \in \R^{m}$. Then, in view of \eqref{eq:17111740}, \eqref{eq:08011802} and L\'{e}vy's continuity theorem it suffices to show that
\begin{equation} \label{eq:09011801}
\integral{\R^d}{}{\psi_s \left( \summe{j=1}{n} f_j \left (\frac{s-u}{r}\right)^{*} r^{-d B(u)} \theta_j \right)}{ds} \rightarrow \integral{\R^d}{}{ \psi_u \left( \summe{j=1}{n} f_j(s)^{*} \theta_j \right)}{ds}
\end{equation}
as $r \rightarrow 0$. After a change of variables the left-hand side of \eqref{eq:09011801} can be rewritten as
\begin{equation*}
\integral{\R^d}{}{ r^d \, \psi_{u+rs} \left( \summe{j=1}{n} f_j \left (s \right)^{*} r^{-d B(u)} \theta_j \right) }{ds} = \integral{\R^d}{}{ \psi_{u+rs} \left( \summe{j=1}{n} f_j \left (s \right)^{*} r^{dB(u+rs)}r^{-d B(u)} \theta_j \right) }{ds},
\end{equation*}
where we used \eqref{eq:08011803} and the $B(u+rs)$-homogeneity of $\psi_{u+rs}$. As implicitly seen in the proof of \cref{23111710} the continuity of $s \mapsto B(s)$ at $u$ ensures that $\mu_{u+rs} \rightarrow \mu_u$ weakly for any $s \in \R^d$ as $r \rightarrow 0$ and hence that $\psi_{u+rs} (\cdot) \rightarrow \psi_u (\cdot)$ uniformly on compact sets (see Lemma 3.1.10 in \cite{thebook}). Since \eqref{eq:08011801} particularly implies that $r^{dB(u+rs)}r^{-dB(u)}\rightarrow \mathbb{E}_m$, we derive the following convergence for any $s \in \R^d$ as $r \rightarrow 0$.
\begin{equation} \label{eq:09011803}
\psi_{u+rs} \left( \summe{j=1}{n} f_j \left (s \right)^{*} r^{dB(u+rs)}r^{-d B(u)} \theta_j \right) \rightarrow  \psi_u \left( \summe{j=1}{n} f_j(s)^{*} \theta_j \right).
\end{equation}
Note that the left-hand side of \eqref{eq:09011803} vanishes for $s \notin [-K,K]^d$, while $\norm{r^{dB(u+rs)}r^{-d B(u)}}$ is bounded for $r>0$ sufficiently small and $s \in [-K,K]^d$ due to \eqref{eq:08011801}. Using \eqref{eq:24111732}, \eqref{eq:23111750} and $\norm{f(s)}=\norm{f(s)^{*}}$ it follows that there exists a constant $C>0$ fulfilling
\begin{equation*}
\left| \psi_{u+rs} \left( \summe{j=1}{n} f_j \left (s \right)^{*} r^{dB(u+rs)}r^{-d B(u)} \theta_j \right) \right|  \le C \summe{j=1}{n} \max \{1,\norm{\theta_j}^b\} \norm{f_j(s)}^{a,b} 
\end{equation*}
for any $s \in \R^d$. In view of $f_1,...,f_n  \in \mathcal{F}_{a,b} $ this gives \eqref{eq:09011801} by dominated convergence.
\end{proof}
Since the previous assumptions might appear challenging we state some useful observations. 
\begin{remark} \label{11071802}
\begin{itemize}
\item[(a)] Consider $\chi(r,s)=r^{dB(u+rs)} r^{-dB(u)}$. Then the proof above revealed that the assumptions of \cref{09011805} can be softened in two ways: On one hand the convergence in \eqref{eq:08011801} merely needs to hold pointwise as long as $\chi(r,s)$ is bounded for any $r>0$ (small) and $s \in K$, where $K \subset \R^d$ is compact. On the other hand $f_1,...,f_n$ do not need to have a compact support, if $\chi(r,s)$ is even bounded for any $r>0$ (small) and $s \in \R^d$. 
\item[(b)] If $B(s)= \alpha(s)^{-1} \mathbb{E}_m$ for any $s \in \R^d$ with $\alpha: \R^d \rightarrow [a,b] \subset (0,2)$, we call $\mathbb{M}$ a (multivariate) \textit{multi-stable} ISRM and compute that
\begin{equation*} 
 r^{dB(u+rs)} r^{-dB(u)}=\exp \left( d \frac{ (\ln r) (\alpha(u)-\alpha(u+rs)) }{\alpha (u) \alpha(u+rs)}  \right) \mathbb{E}_m.
\end{equation*}
Hence in this case a sufficient condition for \eqref{eq:08011801} is given by 
\begin{equation} \label{eq:25071802} 
|\alpha(u+s) - \alpha(u)| = o (1/ \ln \norm{s}) \quad (s \rightarrow 0)
\end{equation}
which is weaker than the corresponding assumption (2.14) in \cite{falconer3}. Also note that \eqref{eq:25071802} is always fulfilled, if $\alpha(\cdot)$ locally satisfies a $\gamma$-H\"older-condition for some $0 < \gamma \le 1$.
\item[(c)] Assumption \eqref{eq:08011803} is superfluous for $f:\R^d \rightarrow \R$ and for general $f: \R^d \rightarrow \R^m$, if $\mathbb{M}$ is a multi-stable ISRM (see part (b) and also the more specific situation in \cref{13031801}), respectively. 
\end{itemize}
\end{remark}
%
\secret{
To finish this section we state the following proposition which generalizes Proposition 2.5 in \cite{falconer3}, since we may particularly consider $f_j=\mathbb{E}_m \indikatorzwei{A_j}$ for $A_j \in \mathcal{S}$ and $j=1,...,n$. In other words: $\mathbb{M}_k \overset{fdd}{\Rightarrow} \mathbb{M}_0$, meaning convergence of all finite-dimensional distributions.
\begin{prop}
For every $k \in \N_{0}$ and $s \in S$ we assume that $\mu_s^{(k)} \sim [0,0,\varphi^{(k)}(s,\cdot)]$ is a symmetric operator-stable distribution on $\R^m$ with spectral measure $\sigma$ as above and symmetric exponent $B^{(k)}(s)$ such that $s \mapsto B^{(k)}(s)$ is measurable for every $k \in \N_0$ and that $B^{(k)}(s) \rightarrow B^{(0)}(s) $ for $\nu$-almost every $s \in S$ (as $k \rightarrow \infty$). Furthermore,
let $\lambda^{(k)}(s):=\lambda_{B^{(k)}(s)}$ as well as $\Lambda^{(k)}(s):=\Lambda_{B^{(k)}(s)}$ and assume that there exist $1/2 < a \le b < 2$ again sucht that $b^{-1} \le \lambda^{(k)}(s) \le \Lambda^{(k)}(s) \le a$. Then we may consider $\mathbb{M}_k \sim (\mu_s^{(k)}, \nu)$ in the sense of \cref{04101702} for every $k \in \N_0$ and, for $n \in \N$ and $f_1,...,f_n \in  \mathcal{F}_{a,b}$, we get the following convergence in distribution as $k \rightarrow \infty$: 
\begin{equation*}
(I_{\mathbb{M}_k}(f_1),...,I_{\mathbb{M}_k}(f_n)) \Rightarrow (I_{\mathbb{M}_0}(f_1),...,I_{\mathbb{M}_0}(f_n)). 
\end{equation*}
\end{prop}
\begin{proof}
The proof can be performed similar as the one of \cref{09011805} (although the corresponding assumptions are dropped again now) and is therefore left to the reader. \textcolor{red}{really?}
\end{proof}
}
\section{Moving-average representation and further examples} \label{chapter3}
As in \cref{09011805} and throughout this section let $\mathbb{M}$ be a multi operator-stable ISRM on $\mathcal{S}=\{A \in \B(\R^d): |A|< \infty\}$. In this section we want to analyze multivariate random fields $\{X(t):t \in \R^d\}$, where each $X(t)$ is an $\R^m$-valued random vector defined by a random integral with respect to $\mathbb{M}$ of suitable functions. Hence the \textit{domain} of these random fields is $\R^d$ and their \textit{state space} is $\R^m$. We start with an extension of Proposition 3.1 in \cite{falconer3}. For this purpose recall the definition of $\norm{\cdot}_{\mathbb{M}}$ from \cref{16111705}.
\begin{prop} \label{27041805}
Let $\mathbb{X}=\{X(t):t \in \R^d\}$ be an $\R^m$-valued random field such that $X(t)=I_{\mathbb{M}}(f(t,\cdot))$ for every $t \in \R^d$, where $f(t,\cdot) \in \mathcal{I}(\mathbb{M})$. Furthermore, suppose that there exists some $\xi > d / a$ such that, for any $K>0$, we can find a constant $L>0$ fulfilling
\begin{equation} \label{eq:09021803}
\norm{f(t_1,\cdot)-f(t_2,\cdot)}_{\mathbb{M}} \le L \norm{t_1-t_2}^{\xi} \quad \text{for all $t_1,t_2 \in [-K,K]^d$}.
\end{equation}
Then, for every $0<\gamma< \xi - d/a$, $\mathbb{X}$ has a version that is locally $\gamma$-H\"older-continuous. Particularly, \eqref{eq:09021803} is fulfilled, if 
\begin{equation} \label{eq:12021801} 
\integral{\R^d}{}{  \norm{f(t_1,s)-f(t_2,s)}^{ 1/ \lambda(s),  1/ \Lambda(s)}  }{ ds} \le \tilde{L} \norm{t_1-t_2}^{\xi b} 
\end{equation}
holds true for all $t_1,t_2 \in [-K,K]^d$ and for some suitable $\tilde{L}>0$ (depending on $K$).
\end{prop}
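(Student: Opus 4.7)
The plan is to combine the $p$-th moment bound from \cref{16111705}(c) with the classical Kolmogorov continuity criterion. By linearity, $X(t_1)-X(t_2)=I_{\mathbb{M}}(f(t_1,\cdot)-f(t_2,\cdot))$, so for every $0<p<a$ I obtain
\[\mathbb{E}\bigl(\|X(t_1)-X(t_2)\|^p\bigr) \le L_2\,\|f(t_1,\cdot)-f(t_2,\cdot)\|_{\mathbb{M}}^p \le L_2\,L^p\,\|t_1-t_2\|^{\xi p}\]
for all $t_1,t_2\in[-K,K]^d$. Kolmogorov's criterion requires $\xi p>d$, which is feasible because $\xi>d/a$, and produces a locally Hölder version with every exponent strictly less than $(\xi p-d)/p=\xi-d/p$. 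Letting $p\uparrow a$ then covers every admissible $\gamma<\xi-d/a$, establishing the first conclusion.

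For the sufficient condition \eqref{eq:12021801}$\,\Rightarrow\,$\eqref{eq:09021803}, I would estimate $H(g_{t_1,t_2},\lambda)$ directly with $g_{t_1,t_2}:=f(t_1,\cdot)-f(t_2,\cdot)$ and then invoke the infimum definition $\|f\|_{\mathbb{M}}=\inf\{\lambda>0: H(f,\lambda)\le 1\}$. Combining both halves of \cref{24111750} with $r_0=1$ gives the uniform pointwise bound $\tau_s(y)\le \max(C_1,C_2)\,\|y\|^{1/\lambda(s),1/\Lambda(s)}$ valid for all $s\in S,\,y\in\R^m$. Since $\|g_{t_1,t_2}(s)^{*}u\|\le\sqrt{m}\,\lambda^{-1}\|g_{t_1,t_2}(s)\|$ whenever $\|u\|_{\infty}\le\lambda^{-1}$, and since $1/\lambda(s),1/\Lambda(s)\in[a,b]$, the prefactor can be absorbed uniformly in $s$ to yield
\[\sup_{\|u\|_{\infty}\le\lambda^{-1}} \tau_s\bigl(g_{t_1,t_2}(s)^{*}u\bigr) \le C'\,\max\{\lambda^{-a},\lambda^{-b}\}\,\|g_{t_1,t_2}(s)\|^{1/\lambda(s),1/\Lambda(s)}\]
with $C'$ depending only on $m,a,b,C_1,C_2$. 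Integrating in $s$ and inserting \eqref{eq:12021801} then produces $H(g_{t_1,t_2},\lambda)\le C'\tilde{L}\,\max\{\lambda^{-a},\lambda^{-b}\}\,\|t_1-t_2\|^{\xi b}$.

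To close the argument I would set $\lambda_0:=\eta\,\|t_1-t_2\|^{\xi}$ and fix $\eta>0$ large enough to force $H(g_{t_1,t_2},\lambda_0)\le 1$ uniformly for $t_1,t_2\in[-K,K]^d$. In the regime $\lambda_0\le 1$ the maximum equals $\lambda_0^{-b}$ and the displayed bound collapses to $C'\tilde{L}\eta^{-b}$; in the regime $\lambda_0>1$ the factor $\|t_1-t_2\|$ lies in the bounded interval $(\eta^{-1/\xi},\,2K\sqrt{d})$, so the bound reduces to something depending only on $K,a,b,\tilde{L}$ and $\eta^{-a}$. Taking $\eta$ to be the larger of the two required constants gives $\|g_{t_1,t_2}\|_{\mathbb{M}}\le\lambda_0=\eta\,\|t_1-t_2\|^{\xi}$, which is precisely \eqref{eq:09021803} with $L=\eta$. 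The main technical hurdle is the bookkeeping of the $s$-dependent exponents $1/\lambda(s),1/\Lambda(s)$ inside the supremum over $u$, which forces both halves of \cref{24111750} as well as the split into the two $\lambda_0$-regimes; by contrast, once the $p$-th moment bound is available, the Kolmogorov step itself is entirely routine.
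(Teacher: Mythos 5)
Your proposal is correct and follows essentially the same route as the paper: the first half is the identical combination of the $p$-th moment bound from \cref{16111705}(c) with Kolmogorov's continuity criterion for $d/\xi<p<a$ and then letting $p\uparrow a$, and the second half matches the paper's argument of bounding $H(f(t_1,\cdot)-f(t_2,\cdot),\lambda_0)$ via \cref{24111750} with $\lambda_0=T\,\|t_1-t_2\|^{\xi}$ and choosing $T$ large (your two $\lambda_0$-regimes are just the two branches of the paper's single expression $C_2\tilde L\max\{T^{-a}\|t_1-t_2\|^{\xi(b-a)},T^{-b}\}$). Only cosmetic differences remain, e.g.\ the exact bookkeeping of the constants from \cref{24111750} raised to the $s$-dependent exponents, which is harmless since those exponents lie in $[a,b]$.
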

\begin{proof}
Fix $K>0$ and $d/ \xi<p<a$ which is possible due to our assumption. Then by linearity, \cref{16111705} (c) and \eqref{eq:09021803} we see that there exists a $C>0$ such that 
\begin{equation*}
\erwartung{\norm{X(t_1)-X(t_2)}^p} \le C \norm{t_1-t_2}^{p \xi}  \quad \text{for all $t_1,t_2 \in [-K,K]^d$}.
\end{equation*}
Note that $p \xi-d >0$. Hence the multivariate version of Kolmogoroff's continuity theorem (see Remark 21.7 in \cite{Kle08}) gives the assertion, since $(p \xi-d)/p \rightarrow \xi - d/a$ as $p \rightarrow a$. \\
For the additional statement fix $t_1,t_2 \in [-K,K]^d$ with $t_1 \ne t_2$ and let $\lambda_0:= T \norm{t_1-t_2}^{\xi}$, where $T>0$ is arbitrary. Also recall \eqref{eq:16111710}. Then, in view of \cref{24111750} and by equivalence of norms, we find constants $C_1,C_2>0$ such that 
\allowdisplaybreaks
\begin{align*}
 H(f(t_1,\cdot)-f(t_2,\cdot), \lambda_0) &  \le  C_1   \integral{\R^d}{}{  \underset{\norm{u}_{\infty} \le \lambda_0^{-1}}{\sup}   \norm{ (f(t_1,s)-f(t_2,s))^{*} u}^{ 1/ \lambda(s),  1/ \Lambda(s)}   }{ ds} \\
& \le C_2 \, \tilde{L} \, \norm{t_1-t_2}^{\xi b} \max \{\lambda_0^{-a} , \lambda_0^{-b} \}  \\
& = C_2 \, \tilde{L}  \, \max \{ T^{-a} \norm{t_1-t_2}^{\xi (b-a)} ,T^{-b} \}
\end{align*}
holds true by \eqref{eq:12021801}. Hence, for $T$ sufficiently large (only depending on $K$ and $\tilde{L}$), the last expression is bounded by one which implies \eqref{eq:09021803}. 
\end{proof}
Now we want to proceed with the construction suggested above. More precisely, by an appropriate choice of integrands $f(t,\cdot)$ we will obtain a rich class of random fields that mostly generalize several so called moving-average representations as we will illustrate in \cref{14031811} below. Furthermore, there is a quite natural relation between all these random fields that will be analyzed in \cref{chapter4}. \\ \quad \\
In order to do so we need some technical assumptions. Let us recall that a function $\phi:\R^d \rightarrow [0,\infty)$ is called $E$-homogeneous for some operator $E \in Q(\R^d)$, if $\phi(r^E x)=r \phi(x)$ for any $r>0$ and $x \in \Gamma_d$. On the other hand a continuous function $\phi:\R^d \rightarrow [0,\infty)$ is called \textit{$(\beta,E)$-admissible}, where $\beta>0$, if the following properties are fulfilled. 
\begin{itemize}
\item[(i)] $\phi(x)>0$ for every $x \in \Gamma_d$.
\item[(ii)] For any $0<A \le B$ there exists a constant $C>0$ such that the implication 
\begin{equation*}
\tau_E(x) \le 1 \quad \Rightarrow \quad |\phi(x+y)-\phi(y)| \le C \tau_E(x)^{\beta}
\end{equation*}
holds true, whenever $A \le \norm{y} \le B$.
\end{itemize}
Often both properties are combined. For more details and examples see \cite{bms}.
\begin{theorem} \label{02031801}
Let $E \in Q(\R^d)$ with $q=tr(E)$ and $D(s) \in \Li{m}$ being symmetric for every $s \in \R^d$. Moreover, let $\phi:\R^d \rightarrow [0,\infty)$ be an $E$-homogeneous and $(\beta,E)$-admissible function for some $\beta>0$. Finally assume that either condition $(C1)$ or $(C2)$ is fulfilled, namely:
\begin{itemize}
\item[\textbf{(C1)}] We have that
\begin{equation} \label{eq:22031801}
-q / b< \inf \nolimits_{s \in \R^d} \lambda_{D(s)-qB(s)} \le  \sup \nolimits_{s \in \R^d} \Lambda_{D(s)-qB(s)}<\beta -q / a.
\end{equation}
\item [\textbf{(C2)}] For almost every $s \in \R^d$ we have that $B(s)D(s)=D(s)B(s)$ together with
\begin{equation} \label{eq:22031802}
0< \inf \nolimits_{s \in \R^d} \lambda_{D(s)} \le \sup \nolimits_{s \in \R^d} \Lambda_{D(s)}< \beta.
\end{equation}
\end{itemize}
Then the stochastic integral 
\begin{equation} \label{eq:01021801}
X(t):=\integral{\R^d}{}{ \left[ \phi(t-s)^{D(s)-qB(s)}-\phi(-s)^{D(s)-qB(s)} \right]}{\mathbb{M}(ds)}
\end{equation}
exists for every $t \in \R^d$. The resulting $\R^m$-valued random field $\mathbb{X}=\{X(t):t \in \R^d\}$ is called a $(\phi,D(s))$-moving-average representation (with respect to $\mathbb{M}$).
\end{theorem}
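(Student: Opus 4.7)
My approach would be to verify the integrability criterion $f(t,\cdot) \in \mathcal{I}(\mathbb{M})$ from \cref{16111705}, where $F(s) := D(s) - qB(s)$ and $f(t,s) := \phi(t-s)^{F(s)} - \phi(-s)^{F(s)}$. Under condition $(C1)$ I would aim for the stronger statement $f(t,\cdot) \in \mathcal{F}_{a,b}$ and appeal to \cref{16111703}(a); under $(C2)$ this route is too crude, so the quasi-norm $H(f(t,\cdot),1)$ would have to be estimated directly, with the commutativity $B(s)D(s) = D(s)B(s)$ used to peel off the $-qB(s)$ contribution before any spectral estimate.

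In either case the key step is a three-region decomposition $\R^d = A_\infty \cup A_0 \cup A_t$, where $A_\infty = \{s : \tau_E(s) \ge R\}$ for $R$ large and $A_0, A_t$ are small $E$-balls around $0$ and $t$. On $A_\infty$ I would factor
\begin{equation*}
\phi(t-s)^{F(s)} - \phi(-s)^{F(s)} = \phi(-s)^{F(s)}\Bigl[\bigl(\phi(t-s)/\phi(-s)\bigr)^{F(s)} - \mathbb{E}_m\Bigr]
\end{equation*}
and combine the $E$-homogeneity and $(\beta,E)$-admissibility of $\phi$ (applied to the point $\theta = l_E(-s) \in S_E$ after the rescaling $-s = \tau_E(-s)^E\theta$) to get
\begin{equation*}
|\phi(t-s)/\phi(-s) - 1| \le C\,\tau_E(t)^{\beta}\,\tau_E(s)^{-\beta} \qquad (s \in A_\infty).
\end{equation*}
A matrix Taylor bound $\|r^{F(s)} - \mathbb{E}_m\| \le C'|r-1|$ uniformly in $s$ for $r$ close to $1$, together with an $E$-analogue of \cref{24111750} giving $\|\phi(-s)^{F(s)}\| \le C''\tau_E(s)^{\Lambda_{F(s)}}$, would then yield $\|f(t,s)\| \le C_t\tau_E(s)^{\Lambda_{F(s)} - \beta}$ on $A_\infty$. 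Passing to $E$-polar coordinates with Jacobian $\asymp r^{q-1}\,dr$, the tail $\int_{A_\infty}\|f(t,s)\|^p\,ds$ is finite whenever $\Lambda_{F(s)} < \beta - q/p$, which is the upper half of $(C1)$ for $p = a$. On $A_0$ the term $\phi(t-s)^{F(s)}$ is bounded, while $\|\phi(-s)^{F(s)}\| \le C'''\tau_E(s)^{\lambda_{F(s)}}$; polar coordinates give a finite integral as long as $\lambda_{F(s)} > -q/p$, which is the lower half of $(C1)$ for $p = b$. The contribution from $A_t$ is handled symmetrically via the substitution $u = t-s$. Summing the three pieces yields $f(t,\cdot) \in \mathcal{F}_{a,b}$.

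Under $(C2)$ the commutativity allows $r^{F(s)} = r^{D(s)}\,r^{-qB(s)}$, with $r^{-qB(s)}$ commuting with $r^{B(s)}$, so the $B(s)$-homogeneity of $\tau_s$ gives $\tau_s(r^{F(s)}u) = r^{-q}\,\tau_s(r^{D(s)}u)$. Two-sided bounds for $\tau_s(r^{D(s)}u)$ in terms of $r^{\lambda_{D(s)}}$ and $r^{\Lambda_{D(s)}}$ (another $E$-adapted variant of \cref{24111750}) would then let the $r^{-q}$ factor be absorbed by the polar-coordinate Jacobian, leaving an integrability threshold involving only $\lambda_{D(s)}$ and $\Lambda_{D(s)}$. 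Repeating the three-region estimate on $H(f(t,\cdot),1)$ would show integrability precisely when $0 < \lambda_{D(s)} \le \Lambda_{D(s)} < \beta$, which is $(C2)$. The main obstacle I anticipate is the uniform matrix mean-value bound $\|r^{F(s)} - \mathbb{E}_m\| \le C|r-1|$ for $r$ near $1$: its derivation rests on the integral representation $r^{F(s)} - \mathbb{E}_m = \int_1^r u^{F(s) - \mathbb{E}_m} F(s)\,du$ combined with uniform boundedness of $\|F(s)\|$ and of $\|u^{F(s)-\mathbb{E}_m}\|$ on a fixed compact $u$-interval, which in turn requires that $F(s)$ varies in a bounded family of symmetric matrices. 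A secondary technical obstacle is the uniform-in-$s$ transfer of Euclidean-norm estimates to $\tau_E$- and $\tau_s$-estimates, for which an $E$-adapted and an $s$-uniform version of \cref{24111750} must be checked.
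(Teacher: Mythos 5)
Your proposal is correct and follows essentially the same route as the paper's proof: a decomposition into a neighbourhood of $\{0,t\}$ and its complement, the factorization $\phi(t-s)^{F(s)}-\phi(-s)^{F(s)}=\bigl[(\phi(t-s)/\phi(-s))^{F(s)}-\mathbb{E}_m\bigr]\phi(-s)^{F(s)}$ combined with the $(\beta,E)$-admissibility of $\phi$ and a uniform matrix mean-value bound $\norm{r^{F(s)}-\mathbb{E}_m}\le C|r-1|$ on the tail region, and, under $(C2)$, the use of commutativity together with the $B(s)$-homogeneity of $\tau_s$ to convert $r^{-qB(s)}$ into the scalar factor $r^{-q}$ absorbed by the polar-coordinate Jacobian. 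The only organizational difference is that the paper treats $(C1)$ and $(C2)$ simultaneously through a single parametrized estimate (an auxiliary symmetric exponent $A(s)$ commuting with $B(s)$ and a parameter $\gamma\in[0,q]$, carried out with an extra shift $t_0$ so that the resulting dominating function can be recycled for stochastic continuity and the tangent-field theorem), whereas you dispatch $(C1)$ separately via the inclusion $\mathcal{F}_{a,b}\subset\mathcal{I}(\mathbb{M})$; both variants rest on the same exponent arithmetic.
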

\begin{proof}
Obviously $X(0)=0$ almost surely. Then, for fixed $t \in \Gamma_d$, we have to show that
\begin{equation*}
f(t,s):=  \phi(t-s)^{D(s)-qB(s)}-\phi(-s)^{D(s)-qB(s)}, \quad s \in \R^d
\end{equation*}
belongs to $\mathcal{I}(\mathbb{M})$. According to \cref{16111701} the definition of $f(t,\cdot)$ can be neglected on the Lebesgue null set $\{0,t \}$. Then, in view of the proof of \cref{09021801} below, it is useful to consider a much more general case first, i.e. let $A(s) \in \Li{m}$ be symmetric such that $A(s)B(s)=B(s)A(s)$ holds for any $s \in \R^d$ (or at least almost everywhere, see above). Moreover, for fixed $ \gamma \in [0,q]$ we assume that 
\begin{equation} \label{eq:19031801}
\frac{\gamma-q}{b} < \rho_1 := \inf \nolimits_{s \in \R^d} \lambda_{A(s)} \le \sup \nolimits_{s \in \R^d} \Lambda_{A(s)}=:\rho_2 < \beta + \frac{\gamma-q}{a}
\end{equation}
holds true and define for a fixed $t_0 \in \R^d$ the function
\begin{equation} \label{eq:16081805}
g(t,t_0,s):=  \phi(t-s)^{A(s+t_0)-\gamma B(s+t_0)}-\phi(-s)^{A(s+t_0)- \gamma B(s+t_0)}, \quad s \in \R^d.
\end{equation}
Then, for fixed $\lambda>0$, we will show that 
\begin{equation} \label{eq:31071801}
\integral{\R^d}{}{ \underset{\norm{u}_{\infty} \le \lambda^{-1} }{\sup} \tau_{s+t_0}(g(t,t_0,s)u) }{ds} < \infty.
\end{equation}
For this purpose let $(\tau(\cdot),l(\cdot))$ be the generalized polar coordinates with respect to $E$, where $\tau(0):=0$, and define the bounded sets $F(\kappa):=\{s: \tau(s) \le \kappa\}$ for any $\kappa>0$. Then our assumptions on $\phi$ imply that $\phi(s)=\tau(s) \phi(l(s))$ for any $s \in \R^d$, where
\begin{equation} \label{eq:26031810}
0<m_{\phi}:=\min_{\theta \in S_E} \phi(\theta) \le \phi(l(s)) \le M_{\phi}:=\max_{\theta \in S_E} \phi(\theta).
\end{equation}
Moreover, since $A(s)$ is symmetric, \eqref{eq:15111704} holds accordingly for $A(s)$ (even if $\rho_1 \le \rho_2 \le 0$) and in view of \eqref{eq:19031801} we verify that $\norm{\xi^{A(s)}}$ is bounded for any $s \in \R^d$ and $m_{\phi} \le \xi \le M_{\phi}$. Hence, for fixed $\eta>0$ and due to \eqref{eq:15111704} again, there exists a constant $C_1 =C_1(\eta) \ge 1$ with
\begin{equation} \label{eq:24011802}
\forall s \in F(\eta): \quad \norm{\phi(s)^{A(s+t_0 )}} \le \norm{\xi^{A(s+t_0)}} \, \norm{\tau(s)^{A(s+t_0)}} \le C_1 \tau(s)^{\lambda_{A(s+t_0)} }.
\end{equation}
Additionally, we have already seen that $\tau_{s}(x+y) \le C_2(\tau_{s}(x)+\tau_{s}(y))$ for any $s,x,y \in \R^d$ and some $C_2>0$. Also recall that $A(s)B(s)=B(s)A(s)$. Therefore the $B(s)$-homogeneity of $\tau_s$, \eqref{eq:26031810} and $\tau_s(-x)=\tau_s(x)$ imply for any $s \in F(\eta)$ that
\begin{align}  
 \sup_{ \norm{u}_{\infty} \le  \lambda^{-1}} \tau_{s+t_0}(g(t,t_0,s)u) &  \le C_2 m_{\phi}^{-\gamma} \left (\tau(t-s)^{-\gamma} \underset{\norm{u}_{\infty} \le  \lambda^{-1}}{\sup} \tau_{s+t_0}(  \phi(t-s)^{A(s+t_0)} u) \right. \notag \\
& \left. \qquad \qquad \qquad \qquad + \tau(s)^{-\gamma} \underset{\norm{u}_{\infty} \le  \lambda^{-1}}{\sup} \tau_{s+t_0}(  \phi(-s)^{A(s+t_0)} u)  \right ) . \label{eq:26031833}
\end{align} 
Now we use the estimates for $\tau_s$ from \eqref{eq:24111732} combined with \eqref{eq:24011802}. Then the following computation holds for every $s \in F(\eta)$ and some constant $C_3=C_3(\lambda)>0$, since $0< a \le b$.
\allowdisplaybreaks
\begin{align}
\underset{\norm{u}_{\infty} \le  \lambda^{-1}}{\sup} \tau_{s+t_0}(  \phi(-s)^{A(s+t_0)} u)  & \le C_1^b  C_3 \left(  \tau(s)^{a \lambda_{A(s+t_0)}}+ \tau(s)^{b \lambda_{A(s+t_0)} } \right) \notag \\
& \le C_1^b  C_3  \max \{1, \eta^{b(\rho_2-\rho_1)}  \} \left(  \tau(s)^{a \rho_1}+ \tau(s)^{b \rho_1 } \right) \label{eq:22031820}.
\end{align}
Recall \eqref{eq:19031801} and that $\gamma-q \le 0$. Using Proposition 2.3 in \cite{bms} this easily yields that 
\begin{equation} \label{eq:26031830}
\integral{F(\eta)}{}{ ( \tau(s)^{a\rho_1-\gamma}+\tau(s)^{b\rho_1 -\gamma} ) }{ds} < \infty.
\end{equation}
Moreover, we know that $\tau(x+y) \le C_4(\tau(x)+\tau(y))$ holds for any $x,y \in \R^d$ and some $C_4 \ge 1$ due to Lemma 2.2 in \cite{bms}. This implies that $F(\eta) \subset \{s: \tau(t-s) \le C_4(\tau(t)+\eta) \}$. Now argue similar as in \eqref{eq:22031820} to verify that there exists some $C_5=C_5(\lambda,\eta,\tau(t))>0$ fulfilling
\begin{align*}
& \left( \underset{\norm{u}_{\infty}\le  \lambda^{-1}}{\sup} \tau_{s+t_0}(  \phi(t-s)^{A(s+t_0)} u) \right) \indikator{F(\eta)}{s} \\
& \qquad \le C_5 \left( \tau(t-s)^{a \rho_1}  +  \tau(t-s)^{b \rho_1}  \right) \indikator{F(C_4(\tau(t)+\eta))}{t-s} .
\end{align*}
Hence by a change of variables this mainly reduces the problem to \eqref{eq:26031830}. Overall and using the bounds from \eqref{eq:26031833} this shows that 
\begin{equation} \label{eq:02081801}
\integral{F(\eta)}{}{ \underset{\norm{u}_{\infty} \le \lambda^{-1} }{\sup} \tau_{s+t_0}(g(t,t_0,s)u) }{ds} < \infty.
\end{equation}
Henceforth we consider the behavior of $g(t,t_0,s)$ for $s \in F(\eta)^c=\{s:\tau(s)> \eta \}$. First of all the assumptions on $\phi$ allow to argue as in (4.5) and (4.6) of \cite{paper} (also see the proof of Theorem 2.5 in \cite{lixiao}). Thus we obtain a constant $C_6>0$ such that the following implication holds for any $s \in \Gamma_d$.
\begin{equation} \label{eq:25011820}
\tau(x) \le 1 \quad \Rightarrow \quad |\phi(x+ \phi(s)^{-E}s) - \phi( \phi(s)^{-E}s ) |= |\phi(x+ \phi(s)^{-E}s) - 1 | \le C_6 \tau(x)^{\beta}.
\end{equation} 
Now we specify and fix the choice of $\eta>0$ in such a way that each of the following inequalities holds, whenever $\tau(s)> \eta$. Note that this is possible as shown in eq. (4.7) of \cite{paper2}.
\begin{equation}  \label{eq:25011810}
\phi(s)^{-1} \tau(t)<1, \quad C_6 \tau(t)^{\beta}\phi(-s)^{- \beta}< \frac{1}{2}, \quad  \phi(-s)>1.
\end{equation}
Then, using the joint continuity of the exponential operator (see Proposition 2.2.11 in \cite{thebook}), we can argue similar as in the proof of Theorem 2.5 in \cite{lixiao}  to verify for any $s \in \R^d$ and $1/2 < u < 3/2$ that
\begin{equation} \label{eq:26011802}
\norm{u^{A(s)-\gamma B(s)} - \mathbb{E}_m}  \le \norm{A(s)-\gamma B(s)} \sup_{1/2 \le r \le 3/2} \norm{r^{A(s)- \gamma B(s)- \mathbb{E}_m} } \, |u-1| \le C_7 |u-1|.
\end{equation}
Here $C_7>0$ is a constant. Moreover, we used that $A(s)$ and $B(s)$ are symmetric such that
\begin{equation*} 
\forall s \in \R^d: \quad \norm{A(s)- \gamma B(s)} \le \norm{A(s)}+\gamma \norm{B(s)}  \le |\rho_1|+|\rho_2| + \gamma /a.
\end{equation*}
Moreover, the $E$-homogeneity of $\phi$ and the assumptions on $A(s)$ imply for any $s \in \Gamma_d$ that
\begin{align} 
& \phi(t-s)^{A(s+t_0 )-\gamma B(s+t_0)}-\phi(-s)^{A(s+t_0)- \gamma B(s+t_0)} \notag  \\
& \qquad = (\phi(\phi(-s)^{-E}t-\phi(-s)^{-E}s)^{A(s+t_0)-\gamma B(s+t_0)}  -\mathbb{E}_m ) \phi(-s)^{A(s+t_0)- \gamma B(s+t_0)} \notag  \\
& \qquad =  \phi(-s)^{ -\gamma B(s+t_0)}   \phi(-s)^{A(s+t_0)}  (\phi(\phi(-s)^{-E}t-\phi(-s)^{-E}s)^{A(s+t_0)-\gamma B(s+t_0)}  -\mathbb{E}_m )  \label{eq:26011801} .
\end{align}
Recall that $\tau(\cdot)$ is an $E$-homogeneous function and consider $x(s,t):=\phi(-s)^{-E}t$, where $\tau(x(s,t))<1$ due to \eqref{eq:25011810}. If we apply \eqref{eq:25011820} to $-s$ and use \eqref{eq:25011810} once more, it follows for every $s \in F(\eta)^c$ that
\begin{equation} \label{eq:26011804}
|\phi(\phi(-s)^{-E}t- \phi(-s)^{-E}s) - 1 | \le C_6 \tau(\phi(-s)^{-E}t)^{\beta}=C_6  \tau(t)^{\beta} \phi(-s)^{-\beta} < 1/2.
\end{equation}
Hence we are allowed to use \eqref{eq:26011802} for $u(s,t):=\phi(\phi(-s)^{-E}t-\phi(-s)^{-E}s)$. Moreover, combine this with \eqref{eq:26011804} to derive for any $s \in F(\eta)^c$ that
\begin{equation}
\norm{  \phi(\phi(-s)^{-E}t-\phi(-s)^{-E}s)^{A(s+t_0)-\gamma B(s+t_0)}  -\mathbb{E}_m}  \le C_6 C_7 \tau(t)^{\beta}  \phi(-s)^{-\beta} \label{eq:21031801}.
\end{equation}
At the same time \eqref{eq:15111704}  and \eqref{eq:25011810} yield the existence of a constant $C_8>0$ which fulfills
\begin{equation} \label{eq:21031802}
\norm{\phi(-s)^{A(s+t_0)}} \le C_8 \phi(-s)^{\Lambda_{A(s+t_0)}}, \quad s \in F(\eta)^c.
\end{equation}
Finally, if we combine \eqref{eq:26011801}, \eqref{eq:21031801} and \eqref{eq:21031802}, this implies similar to \eqref{eq:26031833}-\eqref{eq:22031820} for any $s \in F(\eta)^c$ that
\allowdisplaybreaks
\begin{align*}
& \underset{\norm{u}_{\infty} \le \lambda^{-1}}{\sup} \tau_{s+t_0}(g(t,t_0,s)u) \\
& \qquad \le C_3 \phi(-s)^{- \gamma}   \norm{\phi(-s)^{A(s+t_0)}  (\phi(\phi(-s)^{-E}t-\phi(-s)^{-E}s)^{A(s+t_0)-\gamma B(s+t_0)}  -\mathbb{E}_m )}^{a,b}  \\
 & \qquad \le C_3 C_9  \left (\tau(t)^{a \beta} \phi(-s)^{a( \Lambda_{A(s+t_0)} - \beta) - \gamma   }+ \tau(t)^{b \beta} \phi(-s)^{b (\Lambda_{A(s+t_0)}- \beta) - \gamma }  \right) \\
  & \qquad \le 2 C_3 C_9  \max \{\tau(t)^{a \beta} ,\tau(t)^{b \beta} \}  (m_{\phi} \tau(s))^{a (\rho_2 - \beta) - \gamma } ,
\end{align*}
since $\phi(s)>1$, for some $C_9>0$. Throughout and in view of \eqref{eq:19031801} we used that 
\begin{equation*}
 \Lambda_{A(s+t_0)}- \beta \le \rho_2- \beta  < \frac{\gamma-q}{a} \le 0, \quad s \in \R^d.
\end{equation*}
Particularly, we have that $\int_{F(\eta)^{c}} \tau(s)^{a (\rho_2 - \beta) - \gamma } \, ds < \infty$ (see Proposition 2.3 in \cite{bms} again) and \eqref{eq:31071801} follows due to \eqref{eq:02081801}. Since $t_0 \in \R^d$ and $\lambda>0$ were arbitrary, we especially derive from \eqref{eq:31071801} that $H(g(t,0,\cdot)^{*},\lambda)=H(g(t,0,\cdot),\lambda)< \infty$ for any $\lambda>0$ and therefore that $g(t,0,\cdot) \in \mathcal{I}(\mathbb{M})$ according to \cref{16111705}. Moreover, the assertion easily follows for $f(t, \cdot)$. To verify this merely let $A(s)=D(s)-qB(s)$ with $\gamma=0$ in the situation of $(C 1)$ and $A(s)=D(s)$ with $\gamma=q$ in the situation of $(C 2)$, respectively. 
\end{proof}
Due to our approach in the previous proof we can state the following observation. 
\begin{remark} \label{28031805}
Recall the previous notation. For $\eta,\zeta>0$ and $t \in \R^d$ arbitrary we define the function $h_{\eta,\zeta}(t,\cdot):\R^d \rightarrow [0,\infty)$, where $h_{\eta,\zeta}(t,0)=h_{\eta,\zeta}(t,t)=0$ and 
 \allowdisplaybreaks
\begin{align}
h_{\eta,\zeta}(t,s) &= \left(\tau(s)^{a \rho_1 - \gamma} + \tau(s)^{b \rho_1 - \gamma}  \right) \indikator{F(C_4(\zeta +\eta))}{s}  \notag \\
& \qquad + \left( \tau(t-s)^{a \rho_1- \gamma}+ \tau(t-s)^{b \rho_1- \gamma}  \right) \indikator{F(C_4(\zeta+\eta))}{t-s} \notag \\
&\qquad + \max \{\tau(t)^{a \beta} ,\tau(t)^{b \beta} \}  \tau(s)^{a (\rho_2 - \beta) - \gamma } \indikator{F(\eta)^c}{s} \label{eq:09041801}
 \end{align}
else. Then the proof of \cref{02031801} revealed two aspects. On one hand $h_{\eta,\zeta}(t,\cdot)$ is integrable for any $\eta,\zeta>0$ and $t \in \R^d$ due to \eqref{eq:19031801}. On the other hand, for any $\lambda>0$ and $t \in \R^d$, there exists a constant $C=C(\lambda, \eta, \tau(t))>0$ such that we have
\begin{equation} \label{eq:30031801}
\underset{\norm{u}_{\infty} \le  \lambda^{-1}}{\sup} \tau_{s+t_0}(g(t,t_0,s)u)  \le C h_{\eta,\tau(t)}(t,s) \quad \text{for any $s, t_0 \in \R^d$}
\end{equation}
as long as $\eta=\eta(\tau(t))$ is chosen as in \eqref{eq:25011810}. More precisely, if $\tau(t)$ is bounded, then $\eta$ as well as $C$ can be chosen sufficiently large such that \eqref{eq:30031801} holds uniformly for those $t \in \R^d$.
\end{remark}
According to the present \textit{multi} operator-stable point of view and in contrast to Theorem 4.2 in \cite{paper} it should appear natural that $\mathbb{X}$ will not have stationary increments in general. However, we get the following properties. Here we call an $\R^m$-valued random field $\{X(t):t \in \R^d\}$ \textit{full}, if the distribution of $X(t)$ is full for any $t \in \Gamma_d$.
\begin{prop} \label{09021801}
Suppose that the assumptions of \cref{02031801} are fulfilled and let $\mathbb{X}=\{X(t):t \in \R^d\}$ be the random field defined by \eqref{eq:01021801}. Then we have:
\begin{itemize}
\item[(a)] $\mathbb{X}$ is stochastically continuous.
\item[(b)] I there exists a Borel set $A \subset \R^d$ with $|A|>0$ and such that $D(s)-qB(s) \in \text{GL}(\R^m)$ for every $s \in A$, then $\mathbb{X}$ is full.
\end{itemize}
\end{prop}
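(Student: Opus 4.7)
For part (a), the plan is to invoke \cref{16111705}(d): $X(t_n) \to X(t_0)$ in probability as $t_n \to t_0$ is equivalent to $\norm{f(t_n, \cdot) - f(t_0, \cdot)}_{\mathbb{M}} \to 0$, which by the sandwich in \eqref{eq:10101802} is in turn equivalent to $H(f(t_n, \cdot) - f(t_0, \cdot), 1) \to 0$. I would then establish the latter by dominated convergence. After the translation $s \mapsto s + t_0$ in the integral defining $H$, the integrand takes the form $\sup_{\norm{u}_\infty \le 1} \tau_{s + t_0}(g(t_n - t_0, t_0, s)\, u)$, with $g$ the auxiliary function of \eqref{eq:16081805} used in the proof of \cref{02031801} (with $A(s) = D(s) - qB(s), \gamma = 0$ under (C1), or $A(s) = D(s), \gamma = q$ under (C2)). \cref{28031805} then supplies a fixed integrable dominating function $C\, h_{\eta, 1}(\cdot, \cdot)$, uniform for all $t_n$ once $\tau(t_n - t_0) \le 1$ and $\eta$ has been taken sufficiently large. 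Pointwise convergence $g(t_n - t_0, t_0, s) \to 0$ for $s \notin \{0, t_n - t_0\}$ follows from continuity of $\phi$ off the origin and joint continuity of the real-power map $x \mapsto x^M$ on $(0, \infty)$, and then $\tau_{s+t_0}$ is continuous so the integrand vanishes pointwise. Dominated convergence completes part (a).

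For part (b), the strategy is to apply \cref{11101701}(c): it suffices to produce, for each fixed $t \in \Gamma_d$, a measurable set of positive Lebesgue measure on which $f(t, \cdot)$ takes values in $\text{GL}(\R^m)$. Because $\phi(t - s)^{D(s) - qB(s)}$ and $\phi(-s)^{D(s) - qB(s)}$ are real powers of the same symmetric matrix, they commute, and one obtains the factorization
\begin{equation*}
f(t, s) = \phi(-s)^{D(s) - qB(s)}\,\bigl[u(s)^{D(s) - qB(s)} - \mathbb{E}_m\bigr], \qquad u(s) := \phi(t - s)/\phi(-s),
\end{equation*}
valid for $s \notin \{0, t\}$. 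The first factor is always invertible since $\phi > 0$ off the origin. Because $D(s) - qB(s)$ is symmetric with real eigenvalues $\lambda_i(s)$ which are all nonzero on $A$ by hypothesis, the second factor has spectrum $\{u(s)^{\lambda_i(s)} - 1\}_{i=1}^m$ and is thus invertible precisely when $u(s) \ne 1$, that is, when $\phi(t-s) \ne \phi(-s)$.

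The claim then reduces to showing that $A_t := A \cap \{s: \phi(t-s) \ne \phi(-s)\}$ has positive Lebesgue measure for each $t \in \Gamma_d$, and this last step is the main obstacle. I would show that the closed zero set $Z := \{s : \phi(t-s) = \phi(-s)\}$ has Lebesgue measure zero, whence $|A_t| = |A| > 0$. Using the polar decomposition $-s = r^E \sigma$ with $(r, \sigma) \in (0, \infty) \times S_E$ and the $E$-homogeneity of $\phi$, the defining equation of $Z$ rewrites as $\phi(r^{-E} t + \sigma) = \phi(\sigma)$. As $r \downarrow 0$ the left-hand side tends to $+\infty$ uniformly in $\sigma \in S_E$ (since $\tau_E(r^{-E} t) \to \infty$ and $\phi$ is comparable to $\tau_E$ away from the origin by $E$-homogeneity and compactness of $S_E$), so the equation fails for $r$ in a neighborhood of $0$; combined with continuity of $\phi$ and $(\beta, E)$-admissibility, this forces the $r$-solution set to be of Lebesgue measure zero for each $\sigma$. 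A Fubini argument through the polar factorization then yields $|Z| = 0$, and \cref{11101701}(c) concludes that $X(t)$ is full.
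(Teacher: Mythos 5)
Your overall architecture matches the paper's: part (a) reduces to an integral convergence controlled by \cref{28031805}, and part (b) rests on the factorization $f(t,s)=\phi(-s)^{D(s)-qB(s)}\bigl[\phi(\phi(-s)^{-E}(t-s))^{D(s)-qB(s)}-\mathbb{E}_m\bigr]$ together with the spectral mapping $\mathrm{spec}(r^{D(s)-qB(s)}-\mathbb{E}_m)=\{r^{\lambda}-1\}$ and \cref{11101701}(c). However, both parts have a genuine gap in the final step.

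In part (a), there is no \emph{fixed} integrable dominating function. \cref{28031805} gives $\sup_{\norm{u}_{\infty}\le\lambda^{-1}}\tau_{s+t_0}(g(t_n,t_0,s)u)\le C\,h_{\eta,\zeta}(t_n,s)$ with $C$ and $\eta$ uniform in $n$, but the majorant itself depends on $t_n$: by \eqref{eq:09041801} it contains the term $\bigl(\tau(t_n-s)^{a\rho_1-\gamma}+\tau(t_n-s)^{b\rho_1-\gamma}\bigr)\mathds{1}_{F(C_4(\zeta+\eta))}(t_n-s)$, whose exponents are in general negative (under $(C2)$ one has $a\rho_1-q<0$ whenever $\rho_1<q/a$, e.g.\ in \cref{11041801}(b)), so $h_{\eta,\zeta}(t_n,\cdot)$ has a singularity at the \emph{moving} point $s=t_n$ and $\sup_n h_{\eta,\zeta}(t_n,\cdot)$ need not be integrable: already for $d=1$, $E=1$, $t_n=1/n$ and exponent $-5/6$ one checks $\int_0^1\sup_n|1/n-s|^{-5/6}\,ds=\infty$ by summing over disjoint intervals of radius $n^{-2}/4$ around the points $1/n$. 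The paper closes this by the \emph{generalized} dominated convergence theorem (Theorem 19, Ch.~4 of \cite{RoyFi}): the majorants converge a.e.\ to an integrable $\tilde h$ and $\int h_{\eta,\zeta}(t_n,s)\,ds\to\int\tilde h(s)\,ds$ (the $t_n$-dependent term contributes a constant by translation invariance). You need this extra verification, or an equivalent uniform-integrability argument; plain dominated convergence as you invoke it does not apply.

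In part (b), you correctly isolate the point the paper passes over — one must know that $A_t=A\cap\{s:\phi(t-s)\ne\phi(-s)\}$ has positive measure — but your resolution is false: the set $Z=\{s:\phi(t-s)=\phi(-s)\}$ can have positive (even infinite) Lebesgue measure for admissible $\phi$. Take $d=2$, $E=\mathbb{E}_2$, $\phi=\norm{\cdot}_{\infty}$ (which is $\mathbb{E}_2$-homogeneous and $(1,\mathbb{E}_2)$-admissible, being a norm) and $t=(2,0)$: then $Z$ contains the open cone $\{(x,y):|y|>\max(|x|,|2-x|)\}$, on which both $\norm{s}_{\infty}$ and $\norm{t-s}_{\infty}$ equal $|y|$. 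The inference you make — that because the equation fails for $r$ near $0$ and $\phi$ is continuous and admissible, the $r$-solution set is null — is a non sequitur: admissibility is only an upper modulus bound and gives no strict monotonicity in $r$, and a continuous function can vanish on a set of positive measure away from $0$. Your argument therefore proves (b) only for those $\phi$ (e.g.\ strictly convex norms with $E=\mathbb{E}_d$) for which $|Z|=0$ can actually be verified; in general the conclusion requires either a further hypothesis on $\phi$ or the tacit assumption that $|A\setminus Z_t|>0$ for every $t\in\Gamma_d$, which is also what the paper's ``follows easily'' silently uses.
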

\begin{proof}
For part (a) we fix $t_0 \in \R^d$ and have to show that $X(t_0+t_n) \rightarrow X(t_0)$ in probability as $n \rightarrow \infty$, where $(t_n) \subset \R^d$ is an arbitrary null sequence. Particularly, there exists  some $\zeta>0$ such that $\tau(t_n ) \le \zeta$ for all $n \in \N$. Also fix $u \in \Gamma_m$ (the case $u=0$ is obvious) and recall \eqref{eq:17111740}. Hence by linearity, L\'{e}vy's continuity theorem and after a change of variables it suffices to prove that
\begin{equation} \label{eq:06021811}
\integral{\R^d}{}{ \psi_{s+t_0} \left( (\phi(t_n-s)^{D(s+t_0)-qB(s+t_0)}-\phi(-s)^{D(s+t_0)-qB(s+t_0)} ) u \right)   }{ds} \rightarrow 0.
\end{equation}
Using the notation from the proof of \cref{02031801} the integrand in \eqref{eq:06021811} can be rewritten as $\psi_{s+t_0} (g(t_n,t_0,s) u)$.Then, by continuity of $\psi_{s+t_0}$ and $\phi$, we see for any $s \in \Gamma_d$ that 
\begin{equation*}
\psi_{s+t_0}(g(t_n,t_0,s)u) \rightarrow \psi_{s+t_0}(0)=0.
\end{equation*}
Combine \eqref{eq:23111750} and \cref{28031805} (for $\lambda=\norm{u}_{\infty}^{-1}$) to obtain constants $C, \eta>0$ fulfilling 
\begin{equation} \label{eq:08021801} 
\forall s \in \R^d \, \forall n \in \N: \quad |\psi_{s+t_0} (g(t_n,t_0,s)u)|  \le C \, h_{\eta,\zeta}(t_n,s).
\end{equation}
At the same time, for any $s \in \Gamma_d \setminus \{\tau(s)=C_4(\zeta +\eta)\}$ and as $n \rightarrow \infty$, we verify that
\begin{equation*}
h_{\eta,\zeta}(t_n,s) \rightarrow 2 C \left(\tau(s)^{a \rho_1 - \gamma} + \tau(s)^{b \rho_1 - \gamma}  \right) \indikator{F(C_4(\zeta +\eta))}{s} =: \tilde{h}(s).
\end{equation*}
Finally, $\tilde{h}(\cdot)$ is integrable such that \eqref{eq:06021811} would follow due to a generalized version of the dominated convergence theorem (see Theorem 19 in Chapter 4 of \cite{RoyFi}), provided that $\int_{\R^d} h_{\eta,\zeta}(t_n,s) \, ds \rightarrow \int_{\R^d} \tilde{h}(s)\, ds$ which is obviously true. \\
For part (b) fix $t \in \Gamma_d$ and recall that the integrand in \eqref{eq:01021801} has been denoted by $f(t,\cdot)$. Then \eqref{eq:26011801} implies the following identity for any $s \notin \{0,t\}$. 
\begin{equation*}
\text{det}(f(t,s))=  \text{det} (\phi(-s)^{D(s)-qB(s)} ) \cdot \text{det} \left( \phi(\phi(-s)^{-E}(t-s))^{D(s)-qB(s)} -\mathbb{E}_m \right).
\end{equation*} 
Hence the assertion follows easily from \cref{11101701} (c) since $0 \notin \text{spec}(D(s)-qB(s))$ for any $s \in A$ and since 
\begin{equation*}
\text{spec}(r^{D(s)-qB(s)}- \mathbb{E}_m)=\{r^{\lambda}-1 : \lambda \in \text{spec } (D(s)-qB(s)) \}, \quad r>0.
\end{equation*}
\end{proof}
We now want to illustrate \cref{02031801}. Whereas its proof is in parts similar to the ones of Theorem 4.2 in \cite{paper} and Theorem 2.5 in \cite{lixiao}, the introduction of the operators $A(s)$ might be confusing at first. Actually, the following example emphasizes that neither $(C1)$ implies $(C2)$ nor the other way around. Overall our approach allows us to present a class of non-trivial examples in part (c) that admits a large choice of distribution families $(\mu_s)$.
\begin{example} \label{11041801}
\begin{itemize}
\item[(a)] Consider $m=2$ and $d=1$ together with $\phi(x)=|x|$. Hence, for $E=1$ and $q=1$, we see that $\phi$ is an $E$-homogeneous as well as $(1,E)$-admissible function. 
Moreover and similar to the setting in \cite{paper2} we assume $B(s)=B$ as well as $D(s)=D$ to be constant, respectively. For instance let $\eps=\sqrt{7}/12$, while 
\begin{equation*}
B= \frac{1}{2} \begin{pmatrix}  2 & 0 \\ 0 & 3 \end{pmatrix} \quad \quad \text{and} \quad \quad D= \frac{1}{4} \begin{pmatrix}  1 & 4 \eps \\ 4 \eps & 3  \end{pmatrix} .
\end{equation*}
Then we have $a=2/3$ and $b=1$ together with $-q/b=-1$ and $\beta-q/a=-1/2$. In addition it can be easily checked that
\begin{equation*}
\text{spec}(D-qB)=\{-3/4\pm \eps \} \subset (-1,-1/2)
\end{equation*}
which shows that $(C1)$ from \cref{02031801} is fulfilled. In contrast $(C2)$ fails, since
\begin{equation*}
BD= \frac{1}{8} \begin{pmatrix}   2 & 8 \eps \\  12 \eps &  9 \end{pmatrix}    \ne \frac{1}{8} \begin{pmatrix}   2 & 12 \eps \\ 8  \eps & 9 \end{pmatrix}  = DB.
\end{equation*}
However, using Weyl's inequality (see \cite{horn} for example) and \eqref{eq:12071805}, it follows that \eqref{eq:22031801} always implies \eqref{eq:22031802} and that we necessarily have $D(s) \in Q(\R^m)$ in the context of \cref{02031801}.
\item[(b)] Define $E$ and $\phi$ as in part (a), also let $m=2$ and $B=B(s)=\text{diag}(1, 3/2)$ again. Then, for $D=D(s)=\text{diag}(1/4,1/4)$, we see that condition $(C2)$ of \cref{02031801} is fulfilled. Though we still have $-q/b=-1$ and $\beta-q/a=-1/2$. Hence $(C1)$ does not hold true, since $D-qB=\text{diag}(-3/4,-5/4)$ which leads to $\lambda_{D-qB}=-5/4<-q/b$.
\item[(c)] For any choice of distributions $(\mu_s)$ with symmetric exponents $B(s)$ that fulfill the initial assumptions from \cref{chapter2} we consider the operator $E=\text{diag}(e_1,...,e_d)$, where $e_1,...,e_d \ge 1$. Moreover, let $D(s)=\Delta (s) B(s)$ for a function $\Delta:\R^d \rightarrow [\delta_0,\delta_1] \subset (0, a)$. Then we may use $\phi:\R^d \rightarrow [0,\infty)$, defined by $(x_1,...,x_d) \mapsto \summezwei{j=1}{d} |x_j|^{1/e_j}$. It follows in view of Corollary 2.12 in \cite{bms} that $\phi$ is $E$-homogeneous as well as $(1,E)$-admissible. Particularly, $(C2)$ and the remaining assumptions of \cref{02031801} are fulfilled. Finally note that $d \ge 2 $ is sufficient for $q  > \delta_1$ in this case which means that the resulting moving-average representation is full due to \cref{09021801}. 
\end{itemize}
\end{example}
\begin{remark} \label{14031811}
Most of the moving-average representations known in literature demand $D=D(s)$ to be constant. Then $D$ is called \textit{space-scaling exponent} or \textit{Hurst-Index}. At the same time the underlying random measures are often stable ones such that we have $B(s)=\alpha^{-1}\mathbb{E}_m$ for some $0< \alpha<2$ (see \cite{lixiao}). In this case $(C2)$ reduces to \eqref{eq:22031802} and equals $0<\lambda_D \le \Lambda_D < \beta$. Hence \cref{02031801} covers the univariate and multivariate $\alpha$-stable (non-Gaussian) moving-average representations as they were proposed by \cite{bms}, \cite{lixiao} and Example 3.6.5 in \cite{SaTaq94}.  
\end{remark}
Now in our context $B(s)$ generally denotes an exponent of a symmetric operator-stable distribution and does not even have to be diagonal. This means that the possibility for $D(s)$ to vary in \textit{time} enlarges the possible choice of exponents $B(s)$ such that $D(s)B(s)=B(s)D(s)$ holds and such that we are not restricted to \eqref{eq:22031801}. However, if $B(s)=B$ and $D(s)=D$ are constant, then \eqref{eq:22031801} reduces to
\begin{equation*}
0<\lambda_{D-qB}+\lambda_{qB} \le  \Lambda_{D-qB}+\Lambda_{qB}< \beta.
\end{equation*}
On one hand this is exactly the condition that has been required in Theorem 4.2 of \cite{paper2} and the corresponding operator-stable moving-average representation equals \eqref{eq:01021801} accordingly (at least in the symmetric case). On the other hand the consideration of $(C_2)$ and its possible advantages have been missed in \cite{paper2}. \\ \quad \\
A similar omission can be observed in the context of Proposition 4.3 in \cite{falconer3}, where a class of univariate multi-stable processes has been investigated that is closely related to the several moving-average representations we mentioned above. For the rest of this section we want to find a suitable extension to our multi operator-stable point of view. \\
Throughout let $d=1$. Then we consider the $1$-homogeneous functions $x \mapsto (x)_{+}=\max \{x,0 \}$ and $x \mapsto (x)_{-}=\max \{-x,0\}$. Moreover, for $t,b^{+}, b^{-} \in \R$ arbitrary and $D(s),B(s)$ as before, this allows to use integrands of the form
\begin{equation*}
f(t,s):=b^{+} ((t-s)_{+}^{D(s)-B(s)} - (-s)_{+}^{D(s)-B(s)}) + b^{-} ((t-s)_{-}^{D(s)-B(s)} - (-s)_{-}^{D(s)-B(s)}),
\end{equation*}
provided that they are integrable with respect to $\mathbb{M}$. Here, for $E \in$ L$(\R^m)$, we generally define $0^E:=0 \in \Li{m}$ (even if $\lambda_E<0$). In the \textit{well-balanced} case $b{+}=b^{-}$ this essentially leads to \cref{02031801} with $\phi(x)=|x|$. Unfortunately, the functions $\phi(x)=(x)_{+}$ and $\phi(x)=(x)_{-}$ are not admissible, since $\phi(x)=0$ does not imply $x=0$. Thus \cref{02031801} fails for $b^{+}\ne b^{-}$. However, the following result can be obtained, where it is convenient to restrict ourselves to the case $b^{+}=1$ and $b^{-}=0$. Note that $(C1')$ and $(C2')$ are nothing else than $(C1)$ and $(C2)$ for $q=\beta=1$, respectively.
\begin{cor} \label{28051801}
Let $D(s) \in \Li{m}$ be symmetric for every $s \in \R$ and assume that either condition $(C1')$ or $(C2')$ is fulfilled, namely:
\begin{itemize}
\item[\textbf{(C1$'$)}] We have that
\begin{equation*} 
-1 / b< \inf \nolimits_{s \in \R} \lambda_{D(s)-B(s)} \le  \sup \nolimits_{s \in \R} \Lambda_{D(s)-B(s)}<1 -1 / a.
\end{equation*}
\item [\textbf{(C2$'$)}] For almost every $s \in \R$ we have that $B(s)D(s)=D(s)B(s)$ together with
\begin{equation} \label{eq:10081802} 
0< \inf \nolimits_{s \in \R} \lambda_{D(s)} \le \sup \nolimits_{s \in \R} \Lambda_{D(s)}< 1.
\end{equation}
\end{itemize}
Then the stochastic integral 
\begin{equation}  \label{eq:10081801}
X(t):=\integral{-\infty}{\infty}{ \left[ (t-s)_{+}^{D(s)-B(s)}-(-s)_{+}^{D(s)-B(s)} \right]}{\mathbb{M}(ds)}
\end{equation}
exists for every $t \in \R$ and the resulting $\R^m$-valued process $\mathbb{X}=\{X(t):t \in \R\}$ is stochastically continuous. Furthermore, $\mathbb{X}$ satisfies the assumptions of \cref{27041805} and therefore has a continuous version, if
\begin{equation} \label{eq:24051801}
b/a^2-1/a< \inf \nolimits_{s \in \R} \lambda_{D(s)-B(s)} \le  \sup \nolimits_{s \in \R} \Lambda_{D(s)-B(s)}<1 -1 / a
\end{equation}
is fulfilled instead of $(C1')$ or $(C2')$, respectively. Certainly, \eqref{eq:24051801} implies that $a>1$.
\end{cor}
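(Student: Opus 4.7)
The approach is to adapt the proof of \cref{02031801} (together with the observations in \cref{28031805} and \cref{09021801}) to the one-sided kernel $\phi(x)=(x)_+$. The complication is that $(x)_+$ fails to be $(\beta,E)$-admissible in the earlier sense since it vanishes on $(-\infty,0]$; hence one loses the cancellation near $s=t$ that was crucial for $\phi(x)=|x|$, and has to integrate the singularity coming from the unpaired summand $(t-s)_+^{D(s)-B(s)}$ on its own. Conditions $(C1')$ and $(C2')$ are precisely tuned to make this singularity (and the tail as $|s|\to\infty$) integrable.

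For existence, fix $t\neq 0$, assume without loss of generality $t>0$, and split $\R$ into $\{s>t\}$ (where the integrand vanishes), $\{s<0\}$ and $\{s\in(0,t)\}$. On $\{s<0\}$ the integrand equals $(t-s)^{D(s)-B(s)}-(-s)^{D(s)-B(s)}$, which is exactly the integrand of \cref{02031801} with $\phi(x)=|x|$, $E=1$ and $q=\beta=1$; in these parameters $(C1)$ and $(C2)$ coincide with $(C1')$ and $(C2')$, so that theorem applies verbatim. On $(0,t)$ only $(t-s)^{D(s)-B(s)}$ survives, and integrability of $s\mapsto\sup_{\norm{u}_\infty\leq\lambda^{-1}}\tau_s((t-s)^{D(s)-B(s)}u)$ near $s=t$ is the one new ingredient. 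Under $(C1')$, combining \eqref{eq:15111704} with \eqref{eq:24111732} bounds this by $C\max\{(t-s)^{a\lambda_{D(s)-B(s)}},(t-s)^{b\lambda_{D(s)-B(s)}}\}$, which is integrable iff $\lambda_{D(s)-B(s)}>-1/b$. Under $(C2')$ I would instead use commutativity to factor $(t-s)^{D(s)-B(s)}=(t-s)^{D(s)}(t-s)^{-B(s)}$ and invoke the $B(s)$-homogeneity of $\tau_s$, namely $\tau_s((t-s)^{-B(s)}v)=(t-s)^{-1}\tau_s(v)$, yielding
\begin{equation*}
\tau_s\bigl((t-s)^{D(s)-B(s)}u\bigr)\leq C(t-s)^{a\lambda_{D(s)}-1},
\end{equation*}
which is integrable since $\lambda_{D(s)}>0$. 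Together with \cref{16111705}(a) this gives $f(t,\cdot)\in\mathcal{I}(\mathbb{M})$. Stochastic continuity is then obtained as in \cref{09021801}(a): the integrand in the log-characteristic function of $X(t_0+t_n)-X(t_0)$ admits a dominating function of the type \eqref{eq:09041801}, augmented by an extra summand $C(t_n-s)^{a\lambda_{D(s)-B(s)}}\indikatorzwei{0<s<t_n}$ (respectively $C(t_n-s)^{a\lambda_{D(s)}-1}\indikatorzwei{0<s<t_n}$) for the new singular piece; both are integrable uniformly for $t_n$ in a neighbourhood of $0$ and vanish pointwise as $t_n\to 0$, so the generalized dominated convergence theorem closes the argument.

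For the H\"older-continuous version under the strengthened hypothesis \eqref{eq:24051801}, the task is to verify the sufficient condition \eqref{eq:12021801} of \cref{27041805} with some $\xi>1/a$. Away from $\{0,t_1,t_2\}$, the mean-value estimate \eqref{eq:26011802} on the matrix exponential shows that $\norm{(t_1-s)_+^{D(s)-B(s)}-(t_2-s)_+^{D(s)-B(s)}}$ is of order $|t_1-t_2|$ times an $s$-factor that is integrable by the same spectral bounds used in the existence step, while near the singular points $s=t_j$ and in the tail $|s|\to\infty$ one trades off the H\"older exponent against the spectral data. The algebraic content of \eqref{eq:24051801} is that the strengthened lower bound $\lambda_{D(s)-B(s)}>b/a^2-1/a$, rewritten as $a(\lambda_{D(s)-B(s)}+1/a)>b/a$, is precisely what permits the $\norm{\cdot}^{1/\lambda(s),1/\Lambda(s)}$-bound (whose exponents can reach $b$) to produce an effective H\"older exponent of the form $\xi b$ with $\xi>1/a$; the upper bound $\Lambda_{D(s)-B(s)}<1-1/a$ (inherited from $(C1')$) again controls the tail. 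Combining the two inequalities forces $b/a^2<1$, and with $a\leq b$ this yields $a>1$. Then \cref{27041805} delivers the desired locally H\"older-continuous version.

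The main obstacle is precisely this last step: balancing the singularity at $s=t_j$, the tail as $|s|\to\infty$ and the $s$-dependence of $D(s),B(s)$ so that the H\"older exponent $\xi$ produced by \eqref{eq:12021801} strictly exceeds $1/a$. The sharp hypothesis \eqref{eq:24051801} is exactly the algebraic content needed to cross this threshold, after which the remaining pieces are straightforward applications of the machinery already assembled in \cref{chapter2} and the first half of \cref{chapter3}.
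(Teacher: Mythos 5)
Your proposal is correct and follows essentially the same route as the paper: existence is reduced to the machinery of \cref{02031801} plus a separate treatment of the unpaired singular term $(t-s)_+^{D(s)-B(s)}$, and the continuous version is obtained by verifying \eqref{eq:12021801} with $\xi b=a\rho_1+1$, where the strengthened lower bound in \eqref{eq:24051801} is exactly what makes $\xi>1/a$ (and forces $a>1$), which is the paper's key computation. The only difference is organizational: the paper leaves existence largely to the reader by pointing to its own H\"older estimate specialized at $t_1=t$, $t_2=0$, and then carries out the region-by-region bounds (on $[t_2,t_1]$, on $\{0\le s-t_2\le t_1-t_2\}$ via subadditivity of $y\mapsto y^{\rho_1}$, and on the tail via $b(\rho_2-1)<-1$) that you assert but do not write out.
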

\begin{proof}
Although $\phi$ is not admissible, the existence of $\mathbb{X}$ can be checked similar to the proof of \cref{02031801}. Particularly, if we assume $(C1')$, this can mostly be attained by arguing as below for $t_1=t$ and $t_2=0$. The details are left to the reader. \\
In order to prove the additional statement we fix $K>0$ and $t_1,t_2 \in [-K,K]$. Without loss of generality we may assume that $t_1>t_2$, i.e. $|t_1-t_2|=t_1-t_2 \le 2K$. Let $A(s):=D(s)-B(s)$ for any $s \in \R$ and replace the notation from above by
\begin{equation*}
\rho_1:=\inf \nolimits_{s \in \R} \lambda_{A(s)}, \quad \rho_2:=\sup \nolimits_{s \in \R} \Lambda_{A(s)},
\end{equation*}
where $0< \rho_1 \le \rho_2 <1$ due to \eqref{eq:24051801}. Also define 
\allowdisplaybreaks
\begin{align*}
f(s): &=(t_1-s)_{+}^{D(s)-B(s)} - (t_2-s)_{+}^{D(s)-B(s)} \\
&= \begin{cases} (t_1-s)^{A(s)} - (t_2-s)^{A(s)}, & \text{$s<t_2$} \\ (t_1-s)^{A(s)} , & \text{$t_2 \le s \le t_1$} \\ 0, & \text{$s>t_1$.} \end{cases} 
\end{align*}
Observe that $\xi:=b^{-1}(a \rho_1 +1)>a^{-1}$ according to \eqref{eq:24051801} again. Hence by linearity and in view of \eqref{eq:12021801} the assertion would particularly follow, if 
\begin{equation} \label{eq:07081801}
\integral{\R}{}{  \norm{f(s)}^{a,b} }{ds} \le C \, (t_1-t_2)^{a \rho_1+1}
\end{equation}
holds true. Note that $C>0$ can be chosen independent of $t_1$ and $t_2$, since the same applies to the constants $C_1,C_2,...$ that occur in sequel. By a change of variables and using \cref{24111750} we first verify that
\begin{equation} \label{eq:07081811}
\integral{t_2}{t_1}{ \norm{(t_1-s)^{A(s)}}^{a,b} }{ds}  \le C_1 \integral{0}{t_1-t_2}{ \left(s^{a \lambda_{A(s+t_1)}}+s^{b \lambda_{A(s+t_1)}}\right) }{ds} \le  C_2  (t_1-t_2)^{a \rho_1+1},
\end{equation}
while we can write
\begin{align*}
\integral{-\infty}{t_2}{ \norm{(t_1-s)^{A(s)} - (t_2-s)^{A(s)}}^{a,b} }{ds} & = \integral{0}{\infty}{ \norm{(s+t_1-t_2)^{A(t_2-s)} - s^{A(t_2-s)}}^{a,b} }{ds}.
\end{align*}
Likewise and in view of $\norm{A(\cdot)} \le \rho_2$ we compute for any $s \ge t_1-t_2$ that
\allowdisplaybreaks
\begin{align*}
\norm{(s+t_1-t_2)^{A(t_2-s)} - s^{A(t_2-s)}} & \le \integral{s}{s+t_1-t_2}{ \norm{A(t_2-s) \, y^{A(t_2-s)-\mathbb{E}_m}} }{dy} \\
& \le C_3 \integral{s}{s+t_1-t_2}{ (y^{\rho_1-1}+y^{\rho_2-1} ) }{dy} \\
& \le C_4 (t_1-t_2) (s^{\rho_1-1} + s^{\rho_2-1}).     
\end{align*}
Hence we get that
\allowdisplaybreaks
\begin{align}
\integral{t_1-t_2}{\infty}{ \norm{(s+t_1-t_2)^{A(t_2-s)} - s^{A(t_2-s)}}^b }{ds} & \le C_5 (t_1-t_2)^b ((t_1-t_2)^{b(\rho_1-1)+1}+(t_1-t_2)^{b(\rho_2-1)+1}) \notag \\
& \le C_6  (t_1-t_2)^{b\rho_1+1} \notag \\
& \le C_7  (t_1-t_2)^{a\rho_1+1} \label{eq:07081812},
\end{align}
since $b(\rho_1-1) \le b(\rho_2-1) <-1$ due to \eqref{eq:24051801}. Moreover, it is even easier to verify that 
\begin{equation}
\integral{t_1-t_2}{\infty}{ \norm{(s+t_1-t_2)^{A(t_2-s)} - s^{A(t_2-s)}}^a }{ds}  \le C_8  (t_1-t_2)^{a\rho_1+1}. \label{eq:07081813}
\end{equation}
Finally, if $0 \le s \le t_1-t_2$, we use that $\R_{+} \ni y \mapsto y^{\rho_1}$ is sub-additive. Similar as before this implies that
\allowdisplaybreaks
\begin{align*}
\norm{(s+t_1-t_2)^{A(t_2-s)} - s^{A(t_2-s)}} & \le C_9 \integral{s}{s+t_1-t_2}{y^{\rho_1-1}}{dy} \\
& = C_{10} ((s+t_1-t_2)^{\rho_1} - s^{\rho_1}) \\
& \le C_{10} (t_1-t_2)^{\rho_1}
\end{align*}
and therefore that
\begin{equation}  \label{eq:07081814}
\integral{0}{t_1-t_2}{ \norm{(s+t_1-t_2)^{A(t_2-s)} - s^{A(t_2-s)}}^{a,b} }{ds} \le C_{11}  (t_1-t_2)^{a\rho_1+1}.
\end{equation}
Overall \eqref{eq:07081801} follows from \eqref{eq:07081811}-\eqref{eq:07081814}.
\end{proof}
\begin{remark}
If $m=1$ and $D=D(s)$ is constant, \eqref{eq:10081801} leads to the so called \textit{linear fractional multi-stable motion} (LFMSM) from \cite{falconer3} which we already announced above. We also mentioned that Proposition 4.3 in \cite{falconer3} neglects the possibility of $(C2')$. Nevertheless, it is easy to see that $(C1')$ becomes $1/a-1/b<D<1+1/b-1/a$ in this case, which is exactly the condition that has been required in \cite{falconer3} for the LFMSM in order to exist. Furthermore, a sufficient condition for the LFMSM to have a continuous version is suggested there, namely
\begin{equation} \label{eq:10081803}
1/a<D<1+1/b- 1/a.
\end{equation}
In contrast we compute that \eqref{eq:24051801} equals $b/a^2<D<1+1/b- 1/a$ in this context which is more restrictive than \eqref{eq:10081803}. We think that this gap may arise from a possible mistake in Proposition 3.1 and especially (3.3) of \cite{falconer3}. 
\end{remark}
\section{Tangent fields} \label{chapter4}
In this section we want to explore the local form of certain random fields which is inspired by the univariate considerations in \cite{falconer3}. There two types of \textit{localisability} have been proposed, where the stronger one assumes that the underlying stochastic processes have continuous versions. However, if $d>1$ or $a \le 1$, the criteria from \cref{27041805} will fail for the random fields that we have constructed in \cref{chapter3}. And in view of Remark 4.5 in \cite{paper2} it is even possible that there may not exist any continuous version in general. Therefore we will give a definition that neglects this aspect in sequel. Throughout let $|\cdot|$ be the Lebesgue measure on $(S,\Sigma)=(\R^d, \B(\R^d))$ again, where $\mathbb{M}$ is an ISRM as before.
\begin{defi}
Fix $u \in \R^d$ and let $\mathbb{X}=\{X(t):t \in \R^d\}$ as well as $\mathbb{X}'_u=\{X'_u(t):t \in \R^d\}$ be $\R^m$-valued random fields. Then, for $E \in Q(\R^d)$ and $D \in Q(\R^m)$, we say that $\mathbb{X}$ is \textit{$(E,D)$-localisable at u} with \textit{local form}/\textit{tangent field} $\mathbb{X}'_u$, if the following convergence holds.
\begin{equation} \label{eq:14081801}
\{r^{-D}(X(u+r^{E}t)-X(u)): t \in \R^d \} \overset{fdd}{\longrightarrow} \{X_u'(t):t \in \R^d\} \quad (r \rightarrow 0).
\end{equation}
Here $\overset{fdd}{\rightarrow}$ means convergence of all finite-dimensional distributions. If, for almost every $u \in \R^d$, $\mathbb{X}$ is localisable at $u$ with some operator-stable tangent field $\mathbb{X}'_u$ (see Definition 4.1 in \cite{paper2}), we call $\mathbb{X}$ \textit{multi operator-stable}. 
\end{defi}
Obviously, the operators $E$ and $D$ are not unique. Hence the local forms may vary, too. However, for $d=1$, we may always assume that $E=1$ and in this case we get back the corresponding definition from \cite{falconer3}. On the other hand we have two notions of \textit{multi operator-stability}, depending on if we are dealing with ISRMs (see \cref{04101702}) or with random fields. \cref{13081801} will show that both concepts can be matched in some sense. \\ \quad \\
Now consider a univariate stochastic process which is defined by a stochastic integral with respect to a multi-stable random measure. Then Proposition 3.2 in \cite{falconer3} states sufficient conditions for localisability and also describes possible local forms. Actually, these conditions would look quite complicated in our multivariate setting due to the fact that linear operators do not commutate in general. Instead we want to investigate particular examples that arise from \cref{02031801}. For this purpose observe that \eqref{eq:14081801} implies that $\mathbb{X}'_u$ is (strictly) $(E,D)$-operator-self-similar in the sense of \cite{paper2}, i.e. for any $c>0$ we have that 
\begin{equation*}
\{X_u'(c^E t):t \in \R^d\} \overset{fdd}{=} \{c^D X_u'( t):t \in \R^d\},
\end{equation*}
where $\overset{fdd}{=}$ means equality of all finite-dimensional distributions (see \eqref{eq:25091801}). Since the random fields from Theorem 4.2 in \cite{paper} are operator-stable and at the same time operator-self-similar, these could be appropriate candidates to serve as local forms for \eqref{eq:01021801}. Hence we obtain the following statement. Note that similar results can be derived for the random fields from \cref{28051801}, the details are left to the reader.
\begin{theorem} \label{19021801}
Fix $u \in \R^d$. Assume that the assumptions of \cref{02031801} are fulfilled with condition (C2) and denote the random field from \eqref{eq:01021801} by $\mathbb{X}=\{X(t):t \in \R^d \}$. Moreover, suppose that the mappings $s \mapsto B(s)$ and $s \mapsto D(s)$ are continuous at $u$ such that the following conditions are fulfilled for $v(r,s):=r^{D(u+r^Es)} r^{-D(u)}$:
\begin{itemize}
\item[(i)] $\limes{r}{0} \,v (r,s)=\mathbb{E}_m$ for every $s \in \R^d$.
\item[(ii)] There exists a constant $C>0$ such that $\norm{v (r,s)} \le C$ holds true for every $s \in \R^d$ and $r>0$ (small).
\end{itemize}
Also recall $\mathbb{M}_u$ from \cref{09011805}. Then $\mathbb{X}$ is $(E,D(u))$-localisable at $u$ with local form $\mathbb{X}'_u=\{X'_u(t):t \in \R^d\}$, where 
\begin{equation} \label{eq:18081801}
X'_u(t)=\integral{\R^d}{}{\left[ \phi(t-s)^{D(u)-qB(u)}-\phi(-s)^{D(u)-qB(u)} \right] }{\mathbb{M}_u(ds)}.
\end{equation}
Hence $\mathbb{X}'_u$ equals the operator-stable moving-average representation from Theorem 4.2 in \cite{paper2}.
\end{theorem}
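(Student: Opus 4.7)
My plan is to prove the finite-dimensional convergence by verifying that the joint log-characteristic function of $(r^{-D(u)}(X(u+r^E t_j)-X(u)))_{j=1}^n$ converges pointwise to that of $(X_u'(t_j))_{j=1}^n$ and then invoking L\'{e}vy's continuity theorem. Setting $f_t(s):=\phi(t-s)^{D(s)-qB(s)}-\phi(-s)^{D(s)-qB(s)}$, symmetry of $D(u)$ together with \cref{11101701}(b) yields the representation
\begin{equation*}
\integral{\R^d}{}{\psi_s\!\left(\summe{j=1}{n}\bigl(f_{u+r^E t_j}(s)-f_u(s)\bigr)^{*}\,r^{-D(u)}\theta_j\right)}{ds}
\end{equation*}
at the point $(\theta_1,\dots,\theta_n)$.

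Next I would perform the substitution $s=u+r^E\sigma$ and exploit the $E$-homogeneity of $\phi$. A direct computation, parallel to the proof of \cref{02031801}, delivers the crucial cancellation
\begin{equation*}
f_{u+r^E t}(u+r^E\sigma)-f_u(u+r^E\sigma)=r^{D(u+r^E\sigma)-qB(u+r^E\sigma)}\,g_t(\sigma;r),
\end{equation*}
where $g_t(\sigma;r):=\phi(t-\sigma)^{D(u+r^E\sigma)-qB(u+r^E\sigma)}-\phi(-\sigma)^{D(u+r^E\sigma)-qB(u+r^E\sigma)}$ and the awkward $\phi(-u-r^E\sigma)^{\cdots}$ terms drop out. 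The Jacobian $r^q$ is then absorbed by the $B(u+r^E\sigma)$-homogeneity of $\psi_{u+r^E\sigma}$, and the commutativity $B(\cdot)D(\cdot)=D(\cdot)B(\cdot)$ from (C2), together with symmetry of $B,D$, permits the rearrangement $r^{qB(u+r^E\sigma)}\cdot r^{D(u+r^E\sigma)-qB(u+r^E\sigma)}=r^{D(u+r^E\sigma)}$. One ends up with the compact form
\begin{equation*}
\integral{\R^d}{}{\psi_{u+r^E\sigma}\!\left(\summe{j=1}{n}g_{t_j}(\sigma;r)^{*}\,v(r,\sigma)\,\theta_j\right)}{d\sigma}.
\end{equation*}

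Pointwise convergence of the integrand (for $\sigma\notin\{0,t_1,\dots,t_n\}$) is immediate from condition (i), continuity of $B(\cdot)$ and $D(\cdot)$ at $u$, continuity of $\phi$, and the fact that $\psi_{u+r^E\sigma}\to\psi_u$ uniformly on compact subsets of $\R^m$ (as in the proof of \cref{23111710}); the limit is precisely the integrand in the log-characteristic function of $\mathbb{X}_u'$, obtained from \cref{11101701}(b) applied to $\mathbb{M}_u$. The main obstacle, I expect, is producing an $r$-uniform integrable majorant for dominated convergence. Combining \eqref{eq:23111750}, quasi-subadditivity of $\tau_{u+r^E\sigma}$, and condition (ii) (which keeps $\norm{v(r,\sigma)\theta_j}_{\infty}$ uniformly bounded), the integrand is dominated by a constant multiple of $\summezwei{j=1}{n} \sup_{\norm{w}_{\infty}\le\lambda^{-1}}\tau_{u+r^E\sigma}(g_{t_j}(\sigma;r)^{*}w)$, which is exactly the quantity controlled in \cref{28031805} (with $A(\cdot)=D(\cdot)$, $\gamma=q$) by the integrable function $h_{\eta,\tau(t_j)}(t_j,\sigma)$ from \eqref{eq:09041801}. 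The key point is that the constants in \cref{28031805} depend only on the uniform eigenvalue bounds on $B(\cdot),D(\cdot)$ and not on the location at which these matrices are sampled, so the estimates pass through unchanged with exponents evaluated at $u+r^E\sigma$ rather than $\sigma+u$. Dominated convergence then delivers the desired limit, and L\'{e}vy's continuity theorem closes the argument.
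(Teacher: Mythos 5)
Your proposal is correct and follows essentially the same route as the paper's proof: the same change of variables $s=u+r^E\sigma$, the same use of $E$-homogeneity of $\phi$, $B$-homogeneity of $\psi$, and the commutativity from (C2) to produce the integrand $\psi_{u+r^E\sigma}\bigl(\sum_j g(t_j,t_0,\sigma)v(r,\sigma)\theta_j\bigr)$ with $t_0=u+r^E\sigma-\sigma$, followed by pointwise convergence and dominated convergence with the majorant $h_{\eta,\tau(t_j)}(t_j,\cdot)$ from \cref{28031805}. Your observation that the bound in \cref{28031805} is uniform in the shift $t_0$ is exactly the point the paper relies on.
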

\begin{proof}
For $n \in \N$ arbitrary fix $t_1,...,t_n \in \R^d$ and $\theta_1,...,\theta_n \in \R^m$. Recall the definition of $g(\cdot,\cdot,\cdot)$ from \eqref{eq:16081805} and that $B(s)$ as well as $D(s)$ are symmetric for any $s \in \R^d$. Then by linearity, \eqref{eq:17111740} and a change of variables the log-characteristic function of the random vector
\begin{equation*}
(r^{-D(u)}(X(u+r^E t_1)-X(u)),...,r^{-D(u)}(X(u+r^E t_n)-X(u)))
\end{equation*}
can be computed as follows for every $r>0$. Throughout let $t_0=t_0(r,s):=u+r^Es-s$.
\allowdisplaybreaks
\begin{align}
& \integral{\R^d}{}{\psi_s \left( \summezwei{j=1}{n} \left (  \phi(u+r^{E}t_j-s )^{D(s)-qB(s)} - \phi(u-s )^{D(s)-qB(s)} \right ) r^{-D(u)} \theta_j  \right)  }{ds} \notag \\
& = \int \nolimits_{\R^d}^{} r^q \cdot \psi_{u+r^E s} \left( \summezwei{j=1}{n} \left (  \phi(r^{E} (t_j-s) )^{D(u+r^{E} s)-qB(u+r^{E} s)} \right. \right. \notag \\
& \left. \left. \qquad \qquad \qquad \qquad \qquad - \phi(-r^E s )^{D(u+r^{E} s)-qB(u+r^{E} s)} \right) r^{-D(u)} \theta_j  \right)  \, ds \notag  \\
& = \int \nolimits_{\R^d}^{} \psi_{u+r^E s} \left( r^{q B(u+r^E s)}   \summezwei{j=1}{n}  \left(\phi(t_j-s )^{D(u+r^{E} s)-qB(u+r^{E} s)} \right. \right. \notag \\
& \left.  \left.  \qquad \qquad \qquad \qquad \qquad - \phi(- s )^{D(u+r^{E} s)-qB(u+r^{E} s)} \right) r^{D(u+r^{E} s)-qB(u+r^{E} s)} r^{-D(u)} \theta_j  \right)  \, ds \notag  \\
& = \int \nolimits_{\R^d}^{} \psi_{s+t_0} \left(  \summezwei{j=1}{n} g(t_j,t_0,s)v (r,s) \theta_j  \right)  \notag .
\end{align} 
Here we also used that $B(s)D(s)=D(s)B(s)$ (almost everywhere) and the fact that $\phi$ as well as $\psi_s$ are certain homogeneous functions. Then, in view of L\'{e}vy's continuity theorem and \eqref{eq:17111740} again, it suffices to show that the following convergence holds true as $r \rightarrow 0$.
\begin{align} 
&\integral{\R^d}{}{ \psi_{s+t_0} \left(  \summezwei{j=1}{n} g(t_j,t_0,s)v (r,s) \theta_j  \right) }{ds} \notag \\
& \qquad \rightarrow \int \nolimits_{\R^d}^{} \psi_u \left(  \summezwei{j=1}{n} \left ( \phi(t_j-s )^{D(u)-qB(u)}  - \phi(- s )^{D(u)-qB(u)} \right )  \theta_j  \right)  \, ds.  \label{eq:20021805} 
\end{align}
By assumption we have for every $s \in \R^d$ that $B(u+r^Es) \rightarrow B(u)$. Similar to the proof of \cref{09011805} this implies that $\mu_{u + r^E s}$ converges to $\mu_u$ weakly and therefore that $\psi_{u+r^E s} (\cdot) =\psi_{s+t_0} (\cdot)\rightarrow \psi_u(\cdot)$ holds uniformly on compact sets as $r \rightarrow 0$ (see Lemma 3.1.10 in \cite{thebook}). At the same time we have for every $s \in \R^d$ that $D(u+r^Es) \rightarrow D(u)$ as well as $v (r,s) \rightarrow \mathbb{E}_m$. Hence, as $r \rightarrow 0$, we observe for almost every $s \in \R^d$ that 
\begin{align*}
 \psi_{s+t_0} \left(  \summezwei{j=1}{n} g(t_j,t_0,s) v(r,s) \theta_j  \right) &  \rightarrow \psi_u \left(  \summezwei{j=1}{n} \left ( \phi(t_j-s )^{D(u)-qB(u)}  - \phi(- s )^{D(u)-qB(u)} \right )  \theta_j  \right).
\end{align*}
Now in view of condition (ii) there exists some $\lambda>0$ with $\norm{v  (r,s) \theta_j }_{\infty} \le \lambda^{-1}$ for any $s \in \R^d, r>0$ (small) and $j=1,...,n$. Hence we can use \eqref{eq:23111750} and \eqref{eq:16081801} to verify that the following estimates are valid for any $s \in \R^d$ and $r>0$ (small), where $C>0$ is sufficiently large.
\allowdisplaybreaks
\begin{align}
 \left|   \psi_{s+t_0} \left(  \summezwei{j=1}{n} g(t_j,t_0,s) v(r,s) \theta_j  \right) \right|   &   \le  C \summe{j=1}{n}  \tau_{s+t_0} (g(t_j,t_0,s) v (r,s) \theta_j)  \label{eq:17081801} \\
 &   \le  C \summe{j=1}{n}  \underset{\norm{\theta}_{\infty} \le \lambda^{-1} }{\sup} \tau_{s+t_0} (g(t_j,t_0,s)  \theta). \notag
 \end{align}
Now use \cref{28031805} to see that \eqref{eq:20021805} follows by dominated convergence.
\end{proof}
In general, the continuity assumptions on $B(s)$ and $D(s)$ should appear natural in the context of \cref{19021801}, while (i) and particularly (ii) are rather challenging conditions. Actually, this problem vanishes, if $D(s)$ is constant. On the other hand this restricts the possible choices for $B(s)$, since we essentially benefited from $B(s)D(s)=D(s)B(s)$ which is part of condition $(C2)$.
\begin{example} \label{16081810}
Fix $u \in \R^d$. Throughout consider $E \in Q(\R^d)$ with $q=\text{tr}(E)$ and let $\phi$ be an $E$-homogeneous as well as $(\beta,E)$-admissible function for some $\beta>0$. Moreover, assume that the mapping $s \mapsto B(s)$ is continuous at u. We now give examples that fit into \cref{19021801} (and therefore also into \cref{02031801}).
\begin{itemize}
\item[(a)] As mentioned in \cref{14031811} it is common to assume that $D=D(s)$ is constant. Moreover, if $D$ has the particular form $D=H \,  \mathbb{E}_m$, we merely need that $0< H < \beta$. 
\item[(b)] If $\mathbb{M}$ is multivariate multi-stable (see \cref{11071802} (b)) with $D=D(s)$ still being constant, the symmetric operator $D \in Q(\R^m)$ can be chosen arbitrary as long as
\begin{equation} \label{eq:29081801}
0< \lambda_D \le  \Lambda_D< \beta
\end{equation}
holds true. If $B(s)$ is only diagonal (almost everywhere) instead, it accordingly suffices to assume that $D \in Q(\R^m)$ is diagonal, provided that \eqref{eq:29081801} is still fulfilled.
\item[(c)] Following \cref{11041801}, where $D(s)$ is not constant, we may also use the operators $D(s)=\Delta(s) B(s)$ for some function $\Delta:\R^d \rightarrow [\delta_0,\delta_1] \subset (0, a \beta)$ which is at least continuous at $u$. However, the conditions (i) and (ii) are hard to check in this case. Particularly, $\Sigma(r,s)$ can not be simplified in general.
\item[(d)] Finally let us generalize part (a) by considering $D(s)=\delta(s) \mathbb{E}_m$ for some continuous function (continuous at $u$, respectively) $\delta:\R^d \rightarrow [\delta_0,\delta_1] \subset (0, \beta)$. Then, similar to \eqref{eq:25071802}, a sufficient condition for (i) of \cref{19021801} is given by 
\begin{equation} \label{eq:29081820} 
|\delta(u+s) - \delta(u)| = o (1/ \ln \norm{s}) \quad (s \rightarrow 0).
\end{equation}
At the same time \eqref{eq:29081820} ensures that $v(r,s)$ is bounded for any $r>0$ (small) and $s \in K$, where $K \subset \R^d$ is compact. Fortunately, for large $s$, the required boundedness of $v(r,s)$ can be relaxed. For this purpose let us recall \cref{28031805} and especially the functions $h_{\eta,\zeta}(t,\cdot)$. Then $(C2)$ implies that we have $\gamma=q$ together with
\begin{equation*}
\rho_2=\sup_{s \in \R^d} \Lambda_{D(s)}=\sup_{s \in \R^d} \delta(s)<\beta
\end{equation*}
in this context. Moreover, for any $0<\eps<\beta- \rho_2$, we observe that the functions $s \mapsto \tau(s)^{a \eps}h_{\eta,\zeta}(t,s) $ are still integrable at infinity. Hence, based on \eqref{eq:17081801} and in view of \eqref{eq:27111701}, we merely need the existence of a constant $C>0$ and some $s_0>0$ such that the following inequality holds for any $r>0$ (small) and $\norm{s}\ge s_0$:
\begin{equation} \label{eq:29081810}
\norm{v(r,s)} \le C \tau(s)^{a \eps /b}.
\end{equation}
In our opinion, reformulating \eqref{eq:29081810} by feasible assumptions in terms of the function $\delta$, turns out to be an open and interesting problem.
\end{itemize}
\end{example}
\secret{
\begin{remark}
Recall \cref{28031805} and especially the functions $h_{\eta,\zeta}(t,\cdot)$ for $\eta,\zeta>0$ and $t \in \R^d$. Then $(C2)$ implies that $\gamma=q$ together with
\begin{equation*}
0<\rho_1= \inf_{s \in \R^d} \lambda_{D(s)} \le \sup_{s \in \R^d} \Lambda_{D(s)}=\rho_2<\beta
\end{equation*}
 in this context. Hence, for $0<\eps_1<\rho_1$ and $0<\eps_2<\beta- \rho_2$, the functions $s \mapsto \tau(s)^{-a \eps_1}h_{\eta,\zeta}(t,s) $ are still integrable near zero, while the same applies to the functions $s \mapsto \tau(s)^{a \eps_2}h_{\eta,\zeta}(t,s) $ at infinity. Based on \eqref{eq:17081801} and in view of \eqref{eq:27111701} this allows to relax the boundedness of $r^{D(u+r^Es)} r^{-D(u)}$. More precisely, we merely need the existence of constants $C_1,C_2>0$ such that the following inequalities hold for any $r>0$ (small). Namely
\begin{equation*}
\norm{r^{D(u+r^E s)} r^{-D(u)}} \le C_1 \tau(s)^{- a \eps_1 /b},
\end{equation*}
whenever $s$ is small and
\begin{equation*}
\norm{r^{D(u+r^E s)} r^{-D(u)}} \le C_2 \tau(s)^{a \eps_2 /b},
\end{equation*}
whenever $s$ is large.
\end{remark}
}
Actually, \cref{02031801} allows that $D(s)=B(s)$. Then, if $d=\beta=E=1$ for example, this may cause that the random fields defined in \eqref{eq:01021801} and especially in \eqref{eq:18081801} are not full. On the other hand the tangent fields from \cref{19021801} are $(1,D(u))$-operator-self-similar in this case. We want to finish with the following result which generalizes Example 4.1 in \cite{falconer3} and which leads to tangent fields that are $(1,B(u))$-operator-self-similar instead. Also note that similar observations as in \cref{11071802} and \cref{16081810} are valid. Particularly, \eqref{eq:11071801} boils down to (4.1) in \cite{falconer3}, if $B(s)= \alpha(s)^{-1} \mathbb{E}_m$ for every $s \in \R$.
\begin{prop} \label{13081801}
Fix $u \in \R$ and let $\mathbb{M}$ be as before, where $d=1$. Assume that $\R \ni s \mapsto B(s)$ is continuous at $u$ such that
\begin{equation}  \label{eq:11071801}
\limes{r}{0} r^{B(u+rs)} r^{-B(u)} =\mathbb{E}_m  
\end{equation} 
holds uniformly in $s$ on compact subsets of $\R$. Let $\indikatorzwei{[s,t]}:=-\indikatorzwei{[t,s]} $ for $s>t$. Moreover, let $w:\R \rightarrow \Li{m}$ be a continuous function with $w(s)B(s)=B(s)w(s)$ almost everywhere and define the random field $\mathbb{Y}=\{Y(t):t \in \R\}$ by
\begin{equation*}
Y(t)=\integral{\R}{}{\indikator{[0,t]}{s} w(s)  }{\mathbb{M}(ds)}.
\end{equation*}
Then $\mathbb{Y}$ is $(1,B(u))$-localisable at $u$ with local form $\mathbb{Y}'_u=\{Y'_u(t):t \in \R\}$ given by
\begin{equation*}
Y_u'(t)=\integral{\R}{}{ \indikator{[0,t]}{s} w(u)  }{\mathbb{M}_u(ds)}=w(u) \mathbb{M}_u([0,t]).
\end{equation*}
Here $\mathbb{M}_u$ is defined as before such that $\{\mathbb{M}_u([0,t]):t \in \R\}$ is an $\R^m$-valued operator-stable L\'{e}vy process with exponent $B(u)$. Moreover, $w(u) \in$ GL$(\R^m)$ implies that $\mathbb{Y}'_u$ is full due to \cref{11101701}.
\end{prop}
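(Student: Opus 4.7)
The plan is to mirror the strategy of \cref{19021801}, but with the considerable simplification that the integrand is now a product of an indicator and a matrix, so no admissibility machinery enters.

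First, I would verify that the stochastic integrals defining $Y$ and $Y'_u$ make sense. Since $w$ is continuous and $[0,t]$ is bounded, $s \mapsto \indikatorzwei{[0,t]}(s) w(s)$ is bounded with compact support, hence in $\mathcal{F}_{a,b} \subset \mathcal{I}(\mathbb{M})$ by \cref{16111703}(a); likewise $\indikatorzwei{[0,t]}(s) w(u) \in \mathcal{I}(\mathbb{M}_u)$, and pulling the constant matrix $w(u)$ out of the linear integral gives $Y'_u(t) = w(u)\mathbb{M}_u([0,t])$ as stated. The asserted L\'{e}vy process property of $\{\mathbb{M}_u([0,t]):t \in \R\}$ with exponent $B(u)$ follows from \cref{04101702} together with the independent-increments property of ISRMs.

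For the main convergence, fix $n \in \N$, $t_1,\dots,t_n \in \R$ and $\theta_1,\dots,\theta_n \in \R^m$. By L\'{e}vy's continuity theorem and \eqref{eq:17111740}, it suffices to show convergence of the log-characteristic functions. Writing $Y(u+rt_j)-Y(u) = \int \indikatorzwei{[u,u+rt_j]}(s) w(s) \mathbb{M}(ds)$, performing the change of variables $s = u + r\sigma$, and using the $B(u+r\sigma)$-homogeneity of $\psi_{u+r\sigma}$ together with the commutativity $w(s)B(s)=B(s)w(s)$ (which, since $B(s)$ is symmetric, implies that $w(s)^{*}$ commutes with $r^{B(s)}$), the log-characteristic function of the vector $(r^{-B(u)}(Y(u+rt_j)-Y(u)))_{j=1}^n$ can be rewritten as
\begin{equation*}
\int_{\R} \psi_{u+r\sigma}\Bigl(\sum_{j=1}^n \indikatorzwei{[0,t_j]}(\sigma)\, w(u+r\sigma)^{*}\, \chi(r,\sigma)\, \theta_j\Bigr) d\sigma,
\end{equation*}
where $\chi(r,\sigma) := r^{B(u+r\sigma)} r^{-B(u)}$.

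I would then pass to the limit via dominated convergence. Pointwise in $\sigma$, the continuity of $B(\cdot)$ at $u$ yields $\mu_{u+r\sigma} \Rightarrow \mu_u$ weakly (exactly as in the proof of \cref{23111710}) and hence $\psi_{u+r\sigma} \to \psi_u$ uniformly on compact sets; continuity of $w$ gives $w(u+r\sigma)^{*} \to w(u)^{*}$; and hypothesis \eqref{eq:11071801} gives $\chi(r,\sigma) \to \mathbb{E}_m$. For the majorant, observe that the integrand is supported in the compact set $K := [-\max_j|t_j|,\max_j|t_j|]$; on $K$ the quantities $\norm{\chi(r,\sigma)}$ (via the uniform convergence in \eqref{eq:11071801}) and $\norm{w(u+r\sigma)}$ (via continuity) are bounded uniformly in $r>0$ small and $\sigma \in K$, so the bounds \eqref{eq:23111750} and \eqref{eq:24111732} supply a constant-times-$\indikatorzwei{K}(\sigma)$ integrable majorant. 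The limit then equals the log-characteristic function of $(Y'_u(t_j))_{j=1}^n$ by \eqref{eq:17111740} applied to $\mathbb{M}_u$. The fullness claim is immediate from \cref{11101701}(c) whenever $w(u) \in \mathrm{GL}(\R^m)$, since the integrand for $Y'_u(t)$ is then invertible on a set of positive Lebesgue measure. The main delicate point is the commutation step used to move $r^{B(u+r\sigma)}$ past $w(u+r\sigma)^{*}$; everything else is routine given the machinery already in place.
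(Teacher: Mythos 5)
Your proposal is correct and follows essentially the same route as the paper: reduce to convergence of log-characteristic functions via L\'{e}vy's continuity theorem and \eqref{eq:17111740}, change variables, use the $B(u+rs)$-homogeneity of $\psi_{u+rs}$ together with the commutation $w(s)B(s)=B(s)w(s)$ (and symmetry of $B(s)$) to produce the factor $r^{B(u+rs)}r^{-B(u)}$, and conclude by dominated convergence with a compactly supported bounded majorant built from \eqref{eq:23111750} and \eqref{eq:24111732}. The only additions beyond the paper's argument are your explicit checks of integrability and of the L\'{e}vy-process claim, which are harmless and implicit in the original.
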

\begin{proof}
Fix $t_1,...,t_n \in \R$ as well as $\theta_1,...,\theta_n \in \R^m$ again. Using the convention above it follows that $\indikatorzwei{[s,t]} -\indikatorzwei{[s,u]}=\indikatorzwei{(u,t]} $ for any $s,t,u \in \R$. Hence, similar to the proof of \cref{19021801}, it suffices to show that
\begin{equation} \label{eq:11071803}
 \integral{\R}{}{\psi_s \left( \summezwei{j=1}{n} \indikator{(u,u+r t_j]}{s} w(s)^{*} r^{-B(u)} \theta_j  \right)  }{ds} \rightarrow  \integral{\R}{}{\psi_u \left( \summezwei{j=1}{n} \indikator{[0,t_j]}{s} w(u)^{*} \theta_j  \right)  }{ds}
\end{equation}
holds as $r \rightarrow 0$. Using $w(s)B(s)=B(s)w(s)$ and the $B(s)$-homogeneity of $\psi_s$ we see by a change of variables for any $r>0$ that
\allowdisplaybreaks
\begin{align*} 
& \integral{\R}{}{\psi_s \left( \summezwei{j=1}{n} \indikator{(u,u+r t_j]}{s} w(s)^{*} r^{-B(u)} \theta_j  \right)  }{ds} \\
&=  \integral{\R}{}{r \cdot \psi_{u+rs} \left( \summezwei{j=1}{n} \indikator{(u,u+r t_j]}{u+rs} w(u+rs)^{*} r^{-B(u)} \theta_j  \right)  }{ds}  \\
&=  \integral{\R}{}{ \psi_{u+rs} \left( \summezwei{j=1}{n} \indikator{(0, t_j]}{s} w(u+rs)^{*} r^{B(u+rs)} r^{-B(u)} \theta_j  \right)  }{ds} .
 \end{align*}
Moreover, by \eqref{eq:11071801} and the stated continuity assumptions, we can argue as in the proof of \cref{19021801} that the following convergence is valid for almost every $s \in \R$ as $r \rightarrow 0$.
\begin{equation} \label{eq:12071801}
\psi_{u+rs} \left( \summezwei{j=1}{n} \indikator{(0, t_j]}{s} w(u+rs)^{*} r^{B(u+rs)} r^{-B(u)} \theta_j  \right)  \rightarrow \psi_{u} \left( \summezwei{j=1}{n} \indikator{[0, t_j]}{s} w(u)^{*}  \theta_j  \right) .
\end{equation}
Finally, there is a compact set $K \subset \R$ (independent of $r$) such that the left-hand side of \eqref{eq:12071801} vanishes for $s \notin K$, while $w(u+rs)$ and $r^{B(u+rs)} r^{-B(u)}$ are bounded for any $s \in K$ and $r>0$ (small) by assumption. Hence the left-hand side of \eqref{eq:12071801} is also bounded (combine \eqref{eq:24111732} and \eqref{eq:23111750} for example) such that dominated convergence implies \eqref{eq:11071803}.
\end{proof}
\begin{remark}
Let $w(s)=\mathbb{E}_m$ for all $s \in \R$. Then $\mathbb{Y}=\{\mathbb{M}([0,t]):t \in \R\}$ is what is called an additive process (in law), that is it has independent but not necessarily stationary increments (see Definition 1.6 in \cite{Sato}). Then \cref{13081801} states that in this particular case $\mathbb{Y}$ is $(1,B(u))$-localisable at $u$ with local form $\mathbb{Y}'_u=\{\mathbb{M}_u([0,t]):t \in \R\}$ being an operator-stable L\'{e}vy process with exponent $B(u)$.
\end{remark} 

\end{document}